\documentclass[oneside,english]{amsart}
\usepackage[T1]{fontenc}
\usepackage[latin9]{inputenc}
\usepackage{mathrsfs}
\usepackage{amstext}
\usepackage{amsthm}
\usepackage{amssymb}
\usepackage[all]{xy}

\makeatletter
\numberwithin{equation}{section}
\numberwithin{figure}{section}

\usepackage[dvipsnames,svgnames,x11names,hyperref]{xcolor}
\usepackage{lmodern}
\renewcommand*{\epsilon}{\varepsilon}
\usepackage{amssymb}
\usepackage{amsfonts}
\usepackage{egothic}
\usepackage{xcolor}
\usepackage[T1]{fontenc}
\usepackage{url}
\usepackage{amsthm}
\usepackage{aliascnt}
\usepackage{lipsum}
\usepackage{mathrsfs}
\usepackage{esint}
\usepackage{url}
\usepackage{amsmath}
\usepackage{dsfont}
\usepackage{bbm}
\usepackage{pdflscape}
\usepackage{bbm}
\usepackage{bussproofs}

\DeclareMathOperator{\supp}{supp}
\DeclareMathOperator{\dom}{dom}
\DeclareMathOperator{\Perm}{Perm}

\makeatletter
\def\matrixobject@{%
  \edef \next@{={\DirectionfromtheDirection@ }}%
  \expandafter \toks@ \next@ \plainxy@
  \let\xy@@ix@=\xyq@@toksix@
  \xyFN@ \OBJECT@}
\let\xy@entry@@norm=\entry@@norm
\def\entry@@norm@patched{%
  \let\object@=\matrixobject@
  \xy@entry@@norm }
\AtBeginDocument{\let\entry@@norm\entry@@norm@patched}
\makeatother
\newcommand{\twocong}[2][0.5]{\ar@{}[#2] \save ?(#1)*{\cong}\restore}
\newcommand{\twoeq}[2][0.5]{\ar@{}[#2] \save ?(#1)*{=}\restore}
\newcommand{\ltwocell}[3][0.5]{\ar@{}[#2] \ar@{=>}?(#1)+/r 0.15cm/;?(#1)+/l 0.15cm/^{#3}}
\newcommand{\rtwocell}[3][0.5]{\ar@{}[#2] \ar@{=>}?(#1)+/l 0.15cm/;?(#1)+/r 0.15cm/^{#3}}
\newcommand{\utwocell}[3][0.5]{\ar@{}[#2] \ar@{=>}?(#1)+/d  0.15cm/;?(#1)+/u 0.15cm/_{#3}}
\newcommand{\dtwocell}[3][0.5]{ \ar@{}[#2] { \ar@{=>}?(#1)+/u  0.15cm/;?(#1)+/d 0.15cm/^{#3}}}
\newcommand{\ultwocell}[3][0.5]{\ar@{}[#2] \ar@{=>}?(#1)+/dr  0.15cm/;?(#1)+/ul 0.15cm/^{#3}}
\newcommand{\urtwocell}[3][0.5]{\ar@{}[#2] \ar@{=>}?(#1)+/dl  0.15cm/;?(#1)+/ur 0.15cm/^{#3}}
\newcommand{\dltwocell}[3][0.5]{\ar@{}[#2] \ar@{=>}?(#1)+/ur  0.15cm/;?(#1)+/dl 0.15cm/^{#3}}
\newcommand{\drtwocell}[3][0.5]{\ar@{}[#2] \ar@{=>}?(#1)+/ul  0.15cm/;?(#1)+/dr 0.15cm/^{#3}}

\newcommand{\myar}[2]{\ar^-{#1}[#2]}
\newcommand{\myard}[2]{\ar_-{#1}[#2]}
\usepackage[colorlinks=true,linkcolor=Dark Red, citecolor=Dark Green,linktoc=all]{hyperref}
\usepackage{amsmath}
\usepackage{cleveref}
\makeatletter
  \def\make@df@tag@@#1{%
    \gdef\df@tag{%
      \maketag@@@{\Hy@make@anchor#1}%
      \def\@currentlabel{#1}%
      \def\cref@currentlabel{[equation][2147483647][]#1}%
    }%
  }
  \def\make@df@tag@@@#1{%
    \gdef\df@tag{%
      \tagform@{\Hy@make@anchor#1}%
      \toks@\@xp{\p@equation{#1}}%
      \edef\@currentlabel{\the\toks@}%
      \edef\cref@currentlabel{[equation][2147483647][]\the\toks@}
    }%
  }
\makeatother
\AtBeginDocument{\numberwithin{thm}{subsection}}

\makeatother

\theoremstyle{plain}
\newtheorem{thm}{\protect\theoremname}
\theoremstyle{definition}
\newtheorem{defn}[thm]{\protect\definitionname}
\theoremstyle{remark}
\newtheorem{rem}[thm]{\protect\remarkname}
\theoremstyle{plain}
\newtheorem{prop}[thm]{\protect\propositionname}
\newtheorem{lem}[thm]{\protect\lemmaname}
\newtheorem{cor}[thm]{\protect\corollaryname}
\theoremstyle{definition}
\newtheorem{example}[thm]{\protect\examplename}
\usepackage{babel}
\providecommand{\corollaryname}{Corollary}
\providecommand{\definitionname}{Definition}
\providecommand{\examplename}{Example}
\providecommand{\lemmaname}{Lemma}
\providecommand{\propositionname}{Proposition}
\providecommand{\remarkname}{Remark}
\providecommand{\theoremname}{Theorem}

\begin{document}
\subjclass[2020]{Primary: 18A05, 18B10, 18D15, 18N10; Secondary: 18C50, 18M05}
\date{\today}
\title{Locally subcartesian closed categories}
\author{Charles Walker}
\email{\tt{charles.walker.math@gmail.com}}
\keywords{semicartesian, subcartesian, sublimits, spans, polynomial functor,
affine type, hyperdoctrine, lopos, quantale, categorical semantics,
nominal sets, Schanuel topos, metric spaces, substructural, bunched,
strength}
\begin{abstract}
We introduce locally subcartesian closed categories: categories with
pullbacks equipped with a coherent choice of subobjects of pullbacks,
such that the resulting affine base-change functors have right adjoints.
We develop the basic theory, emphasizing the analogy with locally
cartesian closed categories, and justify the terminology by showing
that every slice category is monoidal closed with jointly monic projections.
We also extend the theory of polynomials, showing that such a category
gives rise to a bicategory of polynomials in the sense of Street,
biequivalent to a 2-category of subcartesian polynomial functors.
We illustrate this theory with the Lawvere quantale as a basic example
and the category of nominal sets as a richer one. This work suggests
a natural categorical semantics for an extensional dependent affine
type theory with bunched contexts and affine implication.
\end{abstract}

\maketitle
\tableofcontents{}

\section{Introduction}

Locally cartesian closed categories (lcccs), which are those categories
$\mathcal{E}$ for which each slice category $\mathcal{E}/X$ is cartesian
closed, play a fundamental role in category theory. Most importantly,
they provide a framework for studying polynomial functors \cite{gambinokock},
a categorical semantics for extensional dependent type theory \cite{Seely},
and are an essential component of the logic of topoi. However, requiring
cartesian closure on the slice categories is too restrictive a condition
to accommodate more general structures. One basic example of such
an $\mathcal{E}$ (also used as a base of enrichment for extended
metric spaces) is Lawvere's quantale \cite{lavmetric} of non-negative
reals $\left(\left[0,\infty\right],\geq\right)$ where each slice
category $\mathcal{E}/X$ has a closed monoidal structure given by
the tensor\footnote{We define this tensor for finite values, though it clearly may be
extended to infinite values.}
\[
\xymatrix@=1em{A\ar[rd] &  & B\ar[ld] & \ar@{}[rd]|-{\overset{\otimes_{X}}{\longmapsto}} &  & A+B-X\ar[d]\\
 & X &  &  &  & X
}
\]
Moreover, the monoidal closure of each slice category $\mathcal{E}/X$
is exhibited by the fact $A+B-X\geq C$ if and only if $A\geq\max\left(C-B+X,X\right)$\footnote{Note each non-trivial $X$ is neither a monoid nor comonoid object
here.}. This monoidal structure is distinct from the cartesian structure
given by $\max\left(A,B\right)$, closed on each slice over $X$ via
the Heyting implication $\left[B,C\right]_{X}=X$ for $B\geq C$ and
$C$ otherwise.

In order to give a broader framework which can accommodate such structures
we will need to find a suitable generalization of lcccs. The first
thought is likely to consider generalizations of lcccs in which we
have closed monoidal structures on the slice categories. However,
as with lcccs, it is important these slice-wise monoidal structures
are determined by a suitable base change, similar to the cartesian
case where we have $g\times_{Y}f\cong\Sigma_{f}\Delta_{f}\left(g\right)$
where $\Sigma_{f}$ is composition with $f$ and $\Delta_{f}$ is
pullback along $f$. This special property of having a correspondence
between the monoidal structure on the slice categories and a coherent
base change is why lcccs have two descriptions, namely as those categories
with closed cartesian monoidal structures on the slice categories
or equivalently as those where each base change $\Delta_{f}$ has
a right adjoint $\Pi_{f}$.

More generally, it turns out in order for a family of monoidal slice
categories $\left(\mathcal{E}/X,\otimes_{X},I_{X}\right)$ to give
such a base change (with components denoted $\nabla_{f}$) the monoidal
structures on each slice category must be semicartesian. This means
each monoidal unit $I_{X}$ must be isomorphic to the terminal object
of the slice category, namely the identity morphism. Equivalently,
this property means there are coherent projection maps from each tensor
product.

In addition to this, in order for the category to have a well behaved
logic, so that each affine base change $\nabla_{f}$ restricts to
functors $\mathbf{Sub}\left(Y\right)\to\mathbf{Sub}\left(X\right)$,
we will require that each tensor product on a slice $\mathcal{E}/X$
satisfies uniqueness (but not necessarily existence) of mediating
morphisms as in
\[
\xymatrix@=1em{ &  & T\ar[rrd]^{\gamma_{2}}\ar[lld]_{\gamma_{1}}\ar@{..>}[d]^{!}\\
A &  & A\otimes_{X}B\myard{\pi_{2}}{rr}\myar{\pi_{1}}{ll} &  & B
}
\]
with these ``subproducts'' giving a basic example of coherent sublimits
(which are defined as subterminal cones)\footnote{Like weak limits, sublimits are not unique. This means we must specify
a choice in order to use them, and this choice should be coherent.}. These subproducts in the slice categories $\mathcal{E}/X$ are subpullbacks
in $\mathcal{E}$, which are chosen commuting squares as below

\[
\xymatrix@=1em{T\ar@/^{1pc}/[rdrr]^{\gamma_{2}}\ar@/_{1pc}/[rdd]_{\gamma_{1}}\ar@{..>}[rd]^{!}\\
 & A\otimes_{X}B\ar[rr]\ar[d]_{\pi_{1}}\myar{\pi_{2}}{rr} &  & B\ar[d]^{g}\\
 & A\ar[rr]_{f} &  & X
}
\]
such that given any commuting square as on the outside, mediating
morphisms are unique whenever they exist. In the terminology of Lawvere
\cite{lawvere1973perugia}, such squares are called subcartesian.

A \emph{locally subcartesian category} is a choice of subpullbacks
where identities and binary composition are respected both horizontally
and vertically, consequently giving rise to an indexed monoidal structure
$\nabla_{\left(-\right)}\colon\mathcal{E}^{\textnormal{op}}\to\mathbf{MonCat}$.
The existence of all necessary morphisms is recovered from this coherence,
rather than a universal property. 

The variant without uniqueness, \emph{locally semicartesian categories},
are shown to be canonical in that they are equivalent to bicategory
structures on hom-categories of spans, and thus provide the slice-wise
analogue of monoidal categories generalizing cartesian monoidal categories.

The main subject of this paper, a \emph{locally subcartesian closed
category}, is then such a category with pullbacks, where (with $\Sigma_{f}$
being composition with $f$) the subcartesian monoidal structure $g\otimes_{Y}f\cong\Sigma_{f}\nabla_{f}\left(g\right)$
on each slice category is right closed. The latter condition is equivalent
to asking each affine base change $\nabla_{f}$ has a right adjoint
$\boxtimes_{f}$ called the dependent subproduct along $f$. This
structure is concisely summarized by replacing the usual adjoint triple
of lcccs by the situation where one has a middle comparison in the
adjoint bridge $\Sigma_{f}\dashv\Delta_{f}\Leftarrow\nabla_{f}\dashv\boxtimes_{f}$
given by the inclusions of subpullbacks into pullbacks.

Importantly in such categories, one still has a polynomial calculus
similar to that of lcccs, and we may derive what this calculus should
be from Street's theory of protocalibrations \cite{proto}. Here the
diagrams of the form
\[
\xymatrix@=1em{I &  & E\ar[rr]^{p}\ar[ll]_{s} &  & B\ar[rr]^{t} &  & J}
\]
assemble into a bicategory of polynomials much like the usual setting,
but with slightly different composition and 2-cells. For instance
the polynomial 2-cells now involve subpullbacks rather than pullbacks,
and composition of polynomials now involves pullbacks, subpullbacks,
and ``distributivity subpullbacks'' (the appropriate analogue of
Weber's distributivity pullbacks \cite{weber}). We establish analogues
of the standard Beck and distributivity isomorphisms, in order to
show that subcartesian polynomials and 2-cells are in biequivalence
with subcartesian polynomial functors and cartesian strong natural
transformations defined by sending such a polynomial to the composite
\[
\xymatrix@=1em{\mathcal{E}/I\ar[rr]^{\Delta_{s}} &  & \mathcal{E}/E\ar[rr]^{\boxtimes_{p}} &  & \mathcal{E}/B\ar[rr]^{\Sigma_{t}} &  & \mathcal{E}/J}
\]
and that this biequivalence restricts to subcartesian 2-cells and
cartesian strong cartesian natural transformations. Because subcartesian
polynomial functors preserve monomorphisms, these cartesian strong
transformations also respect a canonical subcartesian strength, and
so may be called \emph{bunched strong}.

Finally, since the Lawvere quantale example is somewhat basic, we
give a more substantial example: the category of nominal sets, equivalent
to the Schanuel topos. We show that it carries a locally subcartesian
closed structure, and in particular a canonical dependent subproduct,
thereby exhibiting a seemingly unnoticed aspect of the resource-sensitive
structure on nominal sets. This yields non-trivial examples of subcartesian
polynomial functors and suggests possible applications in computer
science.

There is of course much more to generalize than that covered in this
paper, as the theory of locally cartesian closed categories and polynomial
functors is quite large \cite{kocknotes,SpivakBook}. We therefore
focus on laying the foundations for further developments. Many readers
will be interested in the type theoretic viewpoint, that these structures
should provide a generalization of Seely's equivalence \cite{Seely}
between dependent type theory and locally cartesian closed categories.
This would be based on the well known fact that semicartesian closed
categories provide a semantics for affine type systems (substructural
systems where one has weakening but not contraction, equivalent to
requiring each resource is used at most once), with the caveat that
we have both cartesian and affine substitution. In the setting where
one also has dependent products (which is the case in both examples),
this gives a 1-categorical affine variant of Riley\textquoteright s
dependent bunched type theory \cite{riley}. The main advantage of
our semantics is that we retain a rich theory of polynomial functors
and that it is provably canonical, providing resource-aware foundations
that remain similar to Martin-Löf type theory \cite{Hofmann,martinlof}.
In this setting a proof no longer gives a truth that may be used arbitrarily
many times, but rather a resource to be used at most once.

\subsection{Structure of the paper}

In Section \ref{sublimprod} we consider the elementary notion of
a sublimit, and specialize it to subproducts. The point is not merely
to weaken products, but to require coherent choices: a choice of subproducts
should define a monoidal structure, just as a choice of subpullbacks
will later define an indexed monoidal structure.

In Section \ref{locsubcart} we define locally subcartesian categories
in terms of coherent chosen subpullbacks, from which the affine base-change
pseudofunctor is constructed. We then show that each slice inherits
a subcartesian monoidal structure, that reindexing is strong monoidal,
and explain the correspondence with bicategory structures on spans.

In Section \ref{locsubcartclosed} we pass to the closed version of
the theory, where the ambient category has pullbacks and the subcartesian
monoidal structure on each slice category is right closed. This right
closure is shown to be equivalent to each affine base-change having
a right adjoint, the dependent subproduct. To study this dependent
subproduct, we replace Weber's distributivity pullbacks \cite{weber}
with distributivity subpullbacks. We establish the corresponding Beck
and distributivity conditions for this data.

In Section \ref{bicatyspanpoly} we turn to polynomials. Using the
bicategory of spans constructed from coherent subpullbacks (and the
interpretation of polynomials as spans \cite{proto,polyspans}) we
define the bicategory of polynomials associated to a locally subcartesian
closed category and describe its composition. We then define the corresponding
subcartesian polynomial functors, equipping them with a ``bunched
strength'' consisting of both a cartesian and subcartesian strength.
Finally, we show in Theorem \ref{mainbiequiv} that the bicategory
of polynomials is biequivalent to the 2-category of subcartesian polynomial
functors and natural transformations which are bunched strong (meaning
they respect both cartesian and subcartesian strength).

In Section \ref{nom} we apply the theory to nominal sets. We define
separated pullbacks relative to support in the base and show that
they form coherent subpullbacks. We then construct the dependent separated
product and verify that it is right adjoint to affine base change,
so that nominal sets are locally subcartesian closed. Finally, we
show that Pitts\textquoteright{} nominal signature for the untyped
lambda calculus is naturally expressed as a polynomial functor in
the subcartesian sense.

In Section \ref{future} we discuss some possible future directions
of research, ranging from semantics of dependent affine type theory
and recognition theorems for subcartesian polynomial functors to bases
of enrichment and lopoi. Moreover, we will study fibred convolution
structures on categories of presheaves over a locally subcartesian
category (as well as in more general settings) in subsequent work.
This should broaden the applications of the theory, particularly to
sheaves for atomic topologies.

\subsection{Acknowledgements}

The author thanks the members of the Masaryk University, TalTech,
and Macquarie University seminars for their questions and comments.
The author also thanks Nathanael Arkor for correcting some typos,
Andy Pitts for drawing attention to constructive versions of nominal
sets, and Richard Garner for pointing out additional examples.

\section{Sublimits and subproducts\label{sublimprod}}

In this section we study structures in which the usual cartesian product
or pullback (product in the slice category) is replaced with a coherent
choice of subobjects of products. Here coherent means this subproduct
should yield a monoidal category structure and a subpullback should
give an indexed monoidal category structure. Note that we must also
make sure our definition works in the examples where the product does
not exist, whilst remaining equivalent to such structures when the
product does exist.

We also remind the reader that sublimits like weak limits are not
unique in general, thus one must specify a choice in order to reason
about them, and any decent behavior will require this choice to be
coherent. Surprisingly, the use of weak limits is relatively common
whereas that of sublimits is not (with the exception of subterminal
objects). This is likely due to a common misconception that existence
is required all of the time to find a non-trivial structure, when
in fact you often only need existence in certain well-behaved situations
where the existence property may be instead established manually. 
\begin{defn}
A \emph{sublimit}\footnote{In the terminology of Freyd sublimits are instead called partial limits.}
over a diagram is a cone over that diagram such that any mediating
morphism from another cone is unique if it exists.
\end{defn}

\begin{rem}
This is equivalent to a subterminal object in the category of cones
over the given diagram.
\end{rem}

In addition to the above remark, the following proposition also justifies
the \emph{sub} prefix in our naming convention.
\begin{prop}
Suppose the limit $\left(V,v_{i}\right)_{i}$ over a diagram exists.
Then sublimits $\left(S,s_{i}\right)_{i}$ over that diagram are in
bijection with monomorphisms $m\colon S\to V$.
\end{prop}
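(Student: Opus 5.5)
The plan is to set up the correspondence explicitly in both directions, using only the universal property of the limit $\left(V,v_{i}\right)_{i}$ and the definition of a sublimit. The bijection is understood up to the evident identifications: a sublimit is a cone with apex $S$, and a monomorphism is $m\colon S\to V$ for the same object $S$. So I will show that, for a fixed object $S$, cones $\left(S,s_{i}\right)_{i}$ that are sublimits correspond exactly to monomorphisms $S\to V$.

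\emph{From sublimits to monomorphisms.} Given a cone $\left(S,s_{i}\right)_{i}$, the universal property of the limit gives a unique $m\colon S\to V$ with $v_{i}m=s_{i}$ for all $i$. I then show that $m$ is monic when the cone is a sublimit. Take $x,y\colon T\to S$ with $mx=my$. Then $s_{i}x=v_{i}mx=v_{i}my=s_{i}y$, so $\left(T,s_{i}x\right)_{i}$ is a cone. Both $x$ and $y$ are mediating morphisms from this cone to $\left(S,s_{i}\right)_{i}$, and uniqueness of mediating morphisms gives $x=y$.

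\emph{From monomorphisms to sublimits.} Given a monomorphism $m\colon S\to V$, I set $s_{i}:=v_{i}m$, which is a cone because $\left(V,v_{i}\right)$ is. To see it is a sublimit, suppose $x,y\colon T\to S$ both mediate from a cone $\left(T,t_{i}\right)_{i}$, so that $v_{i}mx=t_{i}=v_{i}my$ for all $i$. The uniqueness part of the limit's universal property forces $mx=my$, and since $m$ is monic, $x=y$.

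\emph{The two constructions are mutually inverse.} Starting from a sublimit, the induced $m$ satisfies $v_{i}m=s_{i}$, so the second construction returns the original cone. Starting from a monomorphism $m$, the morphism induced by the cone $\left(v_{i}m\right)_{i}$ is $m$ itself, by uniqueness in the limit's universal property.

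None of these steps is a real obstacle. The only point needing care is stating precisely what "in bijection" means, namely as above: for each fixed apex $S$, or equivalently compatibly with isomorphisms of apexes. I would add a remark making this identification explicit.
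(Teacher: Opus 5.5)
Your proposal is correct and follows essentially the same argument as the paper: the induced map $m$ into the limit is monic by uniqueness of mediating morphisms into the sublimit, and conversely the cone $s_{i}:=v_{i}m$ is a sublimit because any two mediating maps into it become equal after composing with $m$, hence are equal since $m$ is monic. Your explicit check that the two constructions are mutually inverse, together with the remark that the bijection is for a fixed apex $S$, spells out what the paper compresses into the observation that the $s_{i}$ are uniquely determined by $m$.
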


\begin{proof}
Suppose that we are given a sublimit $\left(S,s_{i}\right)_{i}$ over
the diagram. Then there exists a unique mediating morphism $m\colon S\to V$
coherent with the projections. Now suppose we are given $f,g\colon T\to S$
such that $mf=mg$. Since $s_{i}=v_{i}m$ it follows that $s_{i}f=s_{i}g$,
and so $f$ and $g$ are both unique mediating morphisms into the
sublimit and $f=g$.

Conversely suppose we are given the limit $\left(V,v_{i}\right)_{i}$
and a monomorphism $m\colon S\to V$. Then with $s_{i}:=v_{i}m$ and
a cone over $\left(S,s_{i}\right)_{i}$ with two mediating morphisms
$f,g\colon T\to S$ it follows both $mf$ and $mg$ are mediating
morphisms as a cone over $\left(V,v_{i}\right)_{i}$ so $mf=mg$ and
$f=g$. This is a bijection since as seen in the proof the projection
maps $s_{i}$ are uniquely determined by $m$.
\end{proof}

\subsection{Subcartesian monoidal categories\label{subcartesian} }

The following structure generalizes cartesian monoidal categories
in which the tensor $A\otimes B$ is given by the categorical product
$A\times B$ to the case where $A\otimes B$ is a chosen subproduct
of $A$ and $B$, meaning mediating morphisms are unique but may not
exist in general. Note the choice of subproduct will need to be coherent
in order to arrive at a monoidal category structure. It is often the
case that when products exist we have a symmetry $\gamma$ rendering
commutative 
\[
\xymatrix@=1em{A\otimes B\ar[d]_{m^{A,B}}\ar[rr]^{\gamma^{A,B}} &  & B\otimes A\ar[d]^{m^{B,A}}\\
A\times B\ar[rr]_{\sigma^{A,B}} &  & B\times A
}
\]
and so throughout this paper we mostly consider the symmetric setting.

Our definition has an advantage over just using semicartesian monoidal
categories with the subproduct property, as we deduce naturality of
the unitor, associator and symmetry maps along with the triangle and
pentagon equations as a consequence. The hope is this reduced definition
will be simpler to verify in practice, though we will still show the
definition is equivalent.
\begin{defn}
A \emph{symmetric subcartesian monoidal category} $\left(\mathcal{E},\otimes,I\right)$
consists of:
\begin{itemize}
\item a bifunctor $\otimes\colon\mathcal{E}\times\mathcal{E}\to\mathcal{E}$;
\item projections $\pi^{A,B}_{1}\colon A\otimes B\to A$ and $\pi^{A,B}_{2}\colon A\otimes B\to B$
for all $A,B\in\mathcal{E}$;
\item for all $A\in\mathcal{E}$ isomorphisms $\theta^{A}\colon A\cong A\otimes I$
and $\phi^{A}\colon A\cong I\otimes A$;
\item for all $A,B,C\in\mathcal{E}$ isomorphisms $\alpha^{A,B,C}\colon\left(A\otimes B\right)\otimes C\cong A\otimes\left(B\otimes C\right)$\footnote{Here the direction of the associator is chosen based on the direction
of the unitor to match the skew setting.};
\item for all $A,B\in\mathcal{E}$ isomorphisms $\gamma^{A,B}\colon A\otimes B\cong B\otimes A$;
\end{itemize}
such that:
\begin{enumerate}
\item (tensors are subproducts) any filler as below is unique
\[
\xymatrix@=1em{ &  & T\ar@{..>}[d]^{!}\ar[rrd]^{t_{2}}\ar[lld]_{t_{1}}\\
A &  & A\otimes B\myard{\pi^{A,B}_{2}}{rr}\myar{\pi^{A,B}_{1}}{ll} &  & B
}
\]
\item (projections are natural) all morphisms $f\colon A\to A'$ and $g\colon B\to B'$
render commutative\footnote{This is a usual universal property with both existence and uniqueness.
One may define the binary operation from this property.}
\[
\xymatrix@=1em{A\ar[d]_{f} &  & A\otimes B\myar{\pi^{A,B}_{2}}{rr}\ar[d]_{f\otimes g}\myard{\pi^{A,B}_{1}}{ll} &  & B\ar[d]^{g}\\
A' &  & A'\otimes B'\myard{\pi^{A',B'}_{2}}{rr}\myar{\pi^{A',B'}_{1}}{ll} &  & B'
}
\]
\item (units respect projections) all $A$ and $B$ render commutative
\begin{equation}
\xymatrix@=1em{ &  & A\ar[d]^{\theta^{A}}\ar[lld]_{\textnormal{id}_{A}} &  & B\ar[d]_{\phi^{B}}\ar[rrd]^{\textnormal{id}_{B}}\\
A &  & A\otimes I\myar{\pi^{A,I}_{1}}{ll} &  & I\otimes B\myard{\pi^{I,B}_{2}}{rr} &  & B
}
\label{unitproj}
\end{equation}
\item (associators respect projections) all $A,B$ and $C$ render commutative
\begin{equation}
\xymatrix@=1em{ &  &  & \left(A\otimes B\right)\otimes C\myar{\alpha^{A,B,C}}{d}\ar[llld]_{\pi^{A\otimes B,C}_{1}}\ar[rrdr]^{\pi^{A,B}_{2}\otimes C}\\
A\otimes B &  &  & A\otimes\left(B\otimes C\right)\ar[rrr]_{\pi^{A,B\otimes C}_{2}}\ar[lll]^{A\otimes\pi^{B,C}_{1}} &  &  & B\otimes C
}
\label{assocproj}
\end{equation}
\item (symmetries respect projections) all $A$ and $B$ render commutative
\[
\xymatrix@=1em{ &  & A\otimes B\ar[d]^{\gamma^{A,B}}\ar@/_{0.2pc}/[lld]_{\pi^{A,B}_{2}}\ar@/^{0.2pc}/[rrd]^{\pi^{A,B}_{1}}\\
B &  & B\otimes A\myar{\pi^{B,A}_{1}}{ll}\myard{\pi^{B,A}_{2}}{rr} &  & A
}
\]
\end{enumerate}
\end{defn}

\begin{lem}
For any subcartesian monoidal category the unit object is terminal.
\end{lem}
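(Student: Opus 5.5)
We must show that for every object $T$ there is exactly one morphism $T\to I$. Existence comes from the unitors and projections; uniqueness comes from the subproduct property (1) applied to $I\otimes I$.

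\textbf{Existence.} Given $T$, take the composite
\[
T\xrightarrow{\phi^{T}}I\otimes T\xrightarrow{\pi^{I,T}_{1}}I .
\]
So every object admits at least one map to $I$; in particular $I$ is weakly terminal.

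\textbf{Uniqueness.} Let $f,g\colon T\to I$. The idea is to produce two mediating morphisms $T\to I\otimes I$ for the same cone and then apply (1). First I record some facts about $I\otimes I$.
\begin{itemize}
\item By (3), $\pi^{I,I}_{1}\theta^{I}=\textnormal{id}_{I}$. Since $\theta^{I}$ is an isomorphism, $\pi^{I,I}_{1}=(\theta^{I})^{-1}$ is an isomorphism.
\item Likewise $\pi^{I,I}_{2}=(\phi^{I})^{-1}$ is an isomorphism.
\end{itemize}
Now consider the cone $(T;f,g)$ over the pair $(I,I)$, and the two candidate maps
\[
u:=\theta^{I}f,\qquad v:=\phi^{I}g\colon T\to I\otimes I .
\]
Then $\pi_{1}u=f$ and $\pi_{2}v=g$. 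However, $\pi_{2}u$ and $\pi_{1}v$ are not controlled, so $u$ and $v$ are not yet mediating morphisms for the same cone. This is the main obstacle.

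To remove it, I first show that every endomorphism $e\colon I\to I$ is the identity. Apply (1) to the cone $(I;\textnormal{id},e)$. By (2) applied to $\textnormal{id}\otimes e$, we have $\pi_{2}(\textnormal{id}\otimes e)=e\,\pi_{2}$. Hence
\[
\pi_{2}\,(\textnormal{id}\otimes e)\,\theta^{I}=e\,(\phi^{I})^{-1}\theta^{I}.
\]
This suggests a different route: compare the maps $\theta^{I}$ and $\phi^{I}$ via (1). The cone they share has legs $\pi_{1}\theta=\textnormal{id}$ and $\pi_{2}\theta=:a$, versus $\pi_{1}\phi=:b$ and $\pi_{2}\phi=\textnormal{id}$, so they do not directly share a cone.

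A cleaner route uses the map $T\to I$ from the existence step. Set $t:=\pi^{I,T}_{1}\phi^{T}$. By (2) applied to $f\otimes T$,
\[
\pi^{I,T}_{1}\,(f\otimes T)=f\,\pi^{T,T}_{1}.
\]
It remains to show any $f\colon T\to I$ equals $t$. The maps $\phi^{T}$ and $(f\otimes T)\theta'$ should both be mediators for a common cone, where $\theta'$ is a suitable map $T\to T\otimes T$. Since no diagonal exists in general, I would instead rely on the following observation. By (2), $\phi^{T}\colon T\to I\otimes T$ is a mediator for the cone $(T;t,\textnormal{id})$. Also $(f\otimes\textnormal{id})\circ(\pi^{I,T}_{2}\phi^{T})$ is not well typed, so I abandon that attempt.

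The concrete plan I will therefore carry out is:
\begin{enumerate}
\item Show $\pi^{I,I}_{1}=\pi^{I,I}_{2}$. Both $\theta^{I}$ and $\phi^{I}$ are isomorphisms, so it suffices to show $\theta^{I}=\phi^{I}$. I expect this to follow from (1) combined with (5): the symmetry $\gamma^{I,I}$ swaps the projections, so it relates $\theta^{I}$ to $\phi^{I}$.
\item Once $\pi_{1}=\pi_{2}=:p$ is an isomorphism, consider $f,g\colon T\to I$. The map $p^{-1}f$ mediates the cone $(T;f,f)$. Similarly, $(f\otimes g)\phi^{T}$ has legs $f\pi^{I,T}_{1}\phi^{T}$ and $g$.
\item Taking $f=t$ there and using (1), deduce that $g=t$ for every $g$. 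This gives uniqueness.
\end{enumerate}
I expect step 1, identifying the two projections out of $I\otimes I$, to be the delicate point.
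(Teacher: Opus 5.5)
Your existence step is fine, but the uniqueness argument has a genuine gap. Your whole plan rests on step 1, $\pi^{I,I}_{1}=\pi^{I,I}_{2}$. You only expect this to follow from (1) and (5), and it cannot. Since $\pi^{I,I}_{1}=(\theta^{I})^{-1}$ is already invertible, (1) says nothing new about maps into $I\otimes I$. Axiom (5) yields $\gamma^{I,I}=(\pi^{I,I}_{1})^{-1}\pi^{I,I}_{2}$ together with $\pi^{I,I}_{2}(\pi^{I,I}_{1})^{-1}\pi^{I,I}_{2}=\pi^{I,I}_{1}$. This says only that $e:=\pi^{I,I}_{2}(\pi^{I,I}_{1})^{-1}$ is an involution of $I$, not that it is the identity. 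Naturality (2) is indispensable here. Take the one-object category with automorphisms $\{1,g\}$, $g^{2}=1$, with $I\otimes I=I$, $\otimes$ given by multiplication, $\pi_{1}=1$ and $\pi_{2}=g$. It satisfies (1), (3), (4), (5) but not (2), and $I$ is not terminal there. Steps 2 and 3 are also mistyped: $(f\otimes g)\phi^{T}$ is not composable for $f\colon T\to I$, and neither is ``taking $f=t$''. Moreover, the two maps you end up comparing mediate the different cones $(T;t,t)$ and $(T;t,g)$, so (1) does not apply to them.

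The missing idea, which is exactly the paper's proof, is to apply (2) to $\textnormal{id}_{I}\otimes g\colon I\otimes T\to I\otimes I$ for an arbitrary $g\colon T\to I$. The left square gives $\pi^{I,I}_{1}(\textnormal{id}_{I}\otimes g)=\pi^{I,T}_{1}$. Invertibility of $\pi^{I,I}_{1}$ then forces $\textnormal{id}_{I}\otimes g=(\pi^{I,I}_{1})^{-1}\pi^{I,T}_{1}$, independently of $g$. The right square gives $\pi^{I,I}_{2}(\textnormal{id}_{I}\otimes g)=g\,\pi^{I,T}_{2}$. Since $\pi^{I,T}_{2}=(\phi^{T})^{-1}$ by (3), it follows that $g=\pi^{I,I}_{2}(\pi^{I,I}_{1})^{-1}\pi^{I,T}_{1}\phi^{T}$, a fixed map. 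In your own notation, the correctly typed map $(\textnormal{id}_{I}\otimes g)\phi^{T}$ has legs $t$ and $g$. Hence it equals $(\pi^{I,I}_{1})^{-1}t$, and so $g=\pi^{I,I}_{2}(\pi^{I,I}_{1})^{-1}t$ for every $g$. Neither your step 1 nor axiom (1) is needed: it suffices that $\pi^{I,I}_{1}$ is monic, so the second leg of any map into $I\otimes I$ is determined by the first.
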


\begin{proof}
For all objects $A$ a morphism $A\to I$ may be constructed as the
composite of the unitor and projection map $A\cong A\otimes I\to I$,
so $I$ is weakly terminal. Moreover to see $I$ is subterminal note
that for any $f\colon A\to I$ we have the commuting diagram
\[
\xymatrix@=1em{I\ar[d]_{\textnormal{id}_{I}} &  & I\otimes A\myar{\pi^{I,A}_{2}}{rr}\ar[d]_{I\otimes f}\myard{\pi^{I,A}_{1}}{ll} &  & A\ar[d]^{f}\\
I &  & I\otimes I\myard{\pi^{I,I}_{2}}{rr}\myar{\pi^{I,I}_{1}}{ll} &  & I
}
\]
and since $\pi^{I,I}_{1}$ is invertible $I\otimes f$ is uniquely
determined by the left square. Moreover, since $\pi^{I,A}_{2}$ is
invertible $f$ is uniquely determined by $I\otimes f$ as in the
right square.
\end{proof}

\begin{prop}
All coherence data of a (symmetric) subcartesian monoidal category
is uniquely determined by the subproduct property.
\end{prop}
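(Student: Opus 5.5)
The plan is to show, for each piece of coherence data, that it is a mediating morphism into a subproduct for some cone. Uniqueness of mediating morphisms (axiom (1)) then pins it down. Since the remaining axioms express each datum as such a mediating morphism, two choices of the datum must coincide. The same idea gives naturality and the triangle, pentagon and hexagon equations: both sides of each equation are mediating morphisms for the same cone, so they are equal. I read ``uniquely determined'' as covering two things: the structure maps are forced by the bifunctor and projections, and the usual symmetric monoidal axioms follow automatically.

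\textbf{Step 1 (the structure maps).} Axiom (2) says $f\otimes g$ mediates the cone $(f\pi_{1},g\pi_{2})$ into $A'\otimes B'$, so the action of $\otimes$ on morphisms is forced. Axiom (5) says $\gamma^{A,B}$ mediates $(\pi_{2},\pi_{1})$ into $B\otimes A$. Axiom (4) says $\alpha^{A,B,C}$ mediates the cone $(A\otimes\pi_{1}^{B,C})^{-1}$-free pair $\bigl(\pi_{1}^{A\otimes B,C},\ \pi_{2}^{A,B}\otimes C\bigr)$ into $A\otimes(B\otimes C)$. For the unitors, axiom (3) fixes only the first projection of $\theta^{A}$. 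By the preceding lemma $I$ is terminal, so the second projection $A\to I$ is forced as well, and $\theta^{A}$ is the mediating morphism for $(\mathrm{id}_{A},!)$. The same argument applies to $\phi^{B}$. Hence any two choices of $\theta,\phi,\alpha,\gamma$ agree.

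\textbf{Step 2 (naturality).} For $\theta$, compose $(f\otimes I)\theta^{A}$ and $\theta^{A'}f$ with the projections of $A'\otimes I$. Using (2), (3) and terminality of $I$, both give the cone $(f,!)$, so they are equal; $\phi$ and $\gamma$ are handled the same way. For $\alpha$ the two sides land in $A'\otimes(B'\otimes C')$. The first projection $A\otimes\pi_{1}$ is not literally a projection, so the test must iterate. I will use the second-level projections $\pi_{1}\circ(A\otimes\pi_{1})$, $\pi_{1}\circ\pi_{2}$ and $\pi_{2}\circ\pi_{2}$. Before relying on them, I check that this ternary family is jointly monic: if two maps agree after these three maps, then they agree after $\pi_{2}$ by subproduct uniqueness in $B'\otimes C'$, and after $A\otimes\pi_{1}$ by subproduct uniqueness in $A'\otimes B'$. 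Applying (2) on each side then gives naturality of $\alpha$.

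\textbf{Step 3 (coherence axioms).} For the pentagon, both composites $((A\otimes B)\otimes C)\otimes D\to A\otimes(B\otimes(C\otimes D))$ are checked against the four iterated projections, which are jointly monic by induction as in Step 2. By (4) and (2), each side composed with each of these projections is the corresponding iterated projection of the domain. The triangle and the hexagon are handled the same way, using (3), (5) and terminality of $I$.

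The main obstacle is the bookkeeping with the associator. Axiom (4) gives $\alpha$ only against the non-basic projection $A\otimes\pi_{1}$, so every argument must first reduce to iterated basic projections and prove that they are jointly monic. That joint monicity is the one place where uniqueness must be applied twice, nested.
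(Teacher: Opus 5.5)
Your proposal is correct, and its core (Step 1) is the paper's argument. The unitors are the unique fillers of $(\textnormal{id}_A,!)$ by terminality of $I$, and the symmetry fills the swapped projections. For the associator, the paper composes the left triangle of (4) with $\pi_1^{A,B}$ and uses (2) to get $\pi_1^{A,B\otimes C}\alpha^{A,B,C}=\pi_1^{A,B}\pi_1^{A\otimes B,C}$. This makes $\alpha^{A,B,C}$ the unique filler of the genuine cone $\bigl(\pi_1^{A,B}\pi_1^{A\otimes B,C},\ \pi_2^{A,B}\otimes C\bigr)$ over $A$ and $B\otimes C$. Your Step 1 wording skips this reduction, since the pair you list there is not a cone over $A$ and $B\otimes C$, but your Step 2 joint-monicity argument supplies it. Your Steps 2 and 3 (naturality, and the triangle, pentagon and hexagon via iterated projections) go beyond this proposition; the paper handles them in the next corollary by the same appeal to the cartesian argument.
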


\begin{proof}
Since $I$ is the terminal object, we may extend the left diagram
of \eqref{unitproj} by including the projection $\pi^{A,I}_{2}\colon A\otimes I\to I$
and the unique map $!\colon A\to I$, and similarly for the right
diagram. Hence the unit maps are unique fillers. The associator is
seen as the unique filler when \eqref{assocproj} is composed with
the projection $\pi^{A,B}_{1}\colon A\otimes B\to A$. Similarly,
the symmetry is the unique filler with respect to the swapped projections.
\end{proof}

\begin{rem}
The left triangle of \eqref{assocproj} can be replaced by its postcomposite
with the projection $\pi^{A,B}_{1}\colon A\otimes B\to A$, since
the original triangle may then be recovered from the subproduct property.
\end{rem}

\begin{rem}
The projection maps themselves are uniquely determined by those involving
a unit object. Here a general $\pi^{A,B}_{1}$ is given by the composite
\[
\xymatrix@=1em{A\otimes B\myar{A\otimes!_{B}}{rr} &  & A\otimes I\myar{\pi^{A,I}_{1}}{rr} &  & A}
\]
and similarly for the right projections. Moreover, such unit projections
are inverse to the unitors.
\end{rem}

\begin{cor}
The data of a subcartesian monoidal category is coherent with respect
to naturality and the axioms of a symmetric monoidal category.
\end{cor}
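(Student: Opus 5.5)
The plan is to reduce every required equation to the subproduct property (1). Two parallel morphisms $u,v\colon T\to A\otimes B$ are equal as soon as $\pi_1^{A,B}u=\pi_1^{A,B}v$ and $\pi_2^{A,B}u=\pi_2^{A,B}v$, since both are then fillers of the same cone. The same holds for iterated tensors: to test equality of maps into $A\otimes(B\otimes C)$, it suffices to check that they agree after composing with $\pi_1^{A,B\otimes C}$, $\pi_1^{B,C}\pi_2^{A,B\otimes C}$ and $\pi_2^{B,C}\pi_2^{A,B\otimes C}$. The reason is that the two maps then agree after $\pi_2^{A,B\otimes C}$, by (1) for $B\otimes C$, and hence agree outright, by (1) for $A\otimes(B\otimes C)$. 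Similarly, maps into a longer tensor are determined by their composites with all the ``leaf'' projections. So the strategy is: for each naturality square and each coherence axiom, compute both sides postcomposed with the leaf projections, using (2)--(5), and observe that they coincide.

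Naturality comes first. For the unitor $\theta$, given $f\colon A\to A'$, compare $(f\otimes I)\theta^{A}$ and $\theta^{A'}f$. After $\pi_1^{A',I}$, axiom (2) and (\ref{unitproj}) reduce both to $f$. After $\pi_2^{A',I}$, both are maps into the terminal object $I$ (by the earlier lemma), so they agree. The argument for $\phi$ is symmetric. For the associator, given $f,g,h$, the leaf projections of $(f\otimes(g\otimes h))\alpha$ and $\alpha((f\otimes g)\otimes h)$ are computed via (2) and (\ref{assocproj}), and each yields $f$, $g$ or $h$ composed with the corresponding leaf projection of $(A\otimes B)\otimes C$. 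Naturality of $\gamma$ goes the same way, using (5).

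Next come the coherence axioms. The triangle identity $(A\otimes\phi^{B})\,\ldots$ (in whichever form matches the chosen direction of $\alpha$ and the unitors) is checked by projecting onto $A$ and $B$. The $I$-component is automatic by terminality. The pentagon is checked by projecting onto the four leaves $A,B,C,D$ of $A\otimes(B\otimes(C\otimes D))$. Both composites send each leaf projection to the same leaf projection of $((A\otimes B)\otimes C)\otimes D$, because each application of $\alpha$ or of $\alpha\otimes D$, $A\otimes\alpha$ transports leaf projections to leaf projections, by (\ref{assocproj}) together with (2). For the symmetric structure we verify the following.
\begin{itemize}
\item $\gamma^{B,A}\gamma^{A,B}=\mathrm{id}$, which holds since both sides have the same two projections by (5).
\item The hexagon, which is checked on the three leaves.
\item Compatibility of $\gamma$ with the unitors, if required, which is again immediate by projection plus terminality.
\end{itemize}

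The main obstacle is bookkeeping rather than conceptual. We must state cleanly a lemma that maps into an iterated tensor are jointly determined by leaf projections. We must also verify that $\alpha$, $\alpha\otimes D$ and $A\otimes\alpha$ act on leaf projections as expected. This needs (\ref{assocproj}), whose left triangle gives the $A$ and $B$ components and whose right triangle gives the $B$ and $C$ components, combined with functoriality of $\otimes$ and (2). I would prove this transport lemma once, by induction on bracketings, and then every axiom follows uniformly.
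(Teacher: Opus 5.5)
Your proposal is correct and is essentially the paper's argument. The paper observes that all the structure maps are pinned down as unique fillers against the projections, so the usual cartesian verification goes through with $\times$ replaced by $\otimes$; your leaf-projection bookkeeping is that verification written out explicitly, and it rightly uses only uniqueness of fillers, never existence.
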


\begin{proof}
As we have already shown all data is defined as the unique filler
with respect to projection maps the remaining proof that this gives
rise to a symmetric monoidal category is the same as the proof for
the usual cartesian case (but with the notation $A\times B$ replaced
by $A\otimes B$).
\end{proof}

\begin{cor}
Subcartesian monoidal categories are in bijection with semicartesian
symmetric monoidal categories with jointly monic projections.
\end{cor}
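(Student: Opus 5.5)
We construct maps in both directions and check that they are mutually inverse. Throughout we use the preceding results: the unit of a subcartesian monoidal category is terminal, all coherence data is determined by the subproduct property, and (previous corollary) the data satisfies the axioms of a symmetric monoidal category.

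\emph{From subcartesian to semicartesian.} Given a symmetric subcartesian monoidal category $\left(\mathcal{E},\otimes,I\right)$, the previous corollary says $\theta,\phi,\alpha,\gamma$ are natural and satisfy the triangle, pentagon and hexagon/symmetry axioms. So we obtain a symmetric monoidal category. By the lemma, $I$ is terminal, so it is semicartesian. Joint monicity of $\pi_{1}^{A,B},\pi_{2}^{A,B}$ is exactly axiom (1): if $\pi_{i}f=\pi_{i}g$ for $i=1,2$, then $f$ and $g$ are both fillers of the same cone, hence equal.

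\emph{From semicartesian to subcartesian.} Given a semicartesian symmetric monoidal category with jointly monic projections, we define the projections in the standard way:
\[
\pi_{1}^{A,B}:=\rho_{A}\circ(A\otimes!_{B}),\qquad \pi_{2}^{A,B}:=\lambda_{B}\circ(!_{A}\otimes B).
\]
We take $\theta=\rho^{-1}$, $\phi=\lambda^{-1}$, keep the associator (inverting it if the convention requires, to match the stated direction), and keep the symmetry. We then verify the axioms:
\begin{itemize}
\item (1) is joint monicity.
\item (2) follows from bifunctoriality of $\otimes$, naturality of $\rho$ and $\lambda$, and the fact that $!_{B'}\circ g=!_{B}$ by terminality.
\item (3) holds because $!_{I}=\textnormal{id}_{I}$.
\item (5) follows from naturality of the symmetry together with the standard coherence $\lambda\circ\gamma=\rho$.
\item (4) is the main computation. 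The right triangle $\pi_{2}^{A,B\otimes C}\circ\alpha=\pi_{2}^{A,B}\otimes C$ reduces, via naturality of $\alpha$ in the first variable applied to $!_{A}$, to the coherence $\lambda_{B\otimes C}\circ\alpha_{I,B,C}=\lambda_{B}\otimes C$. The left triangle similarly uses naturality of $\alpha$ in the third variable with $!_{C}$, together with $(A\otimes\rho_{B})\circ\alpha_{A,B,I}=\rho_{A\otimes B}$. Both are standard consequences of the triangle and pentagon axioms (Kelly's lemmas).
\end{itemize}

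\emph{Bijectivity.} Starting from subcartesian data, forgetting the projections and rebuilding them recovers the original projections, by the remark expressing $\pi_{1}^{A,B}=\pi_{1}^{A,I}\circ(A\otimes!_{B})$ with $\pi_{1}^{A,I}$ inverse to $\theta^{A}$. Conversely, starting from a semicartesian structure, the rebuilt monoidal data is the original data on the nose. Alternatively, the proposition that coherence data is uniquely determined by the subproduct property shows the round trip is the identity, since the original structure maps satisfy the same projection equations.

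\emph{Expected difficulty.} The only real obstacle is bookkeeping in axiom (4), specifically invoking Kelly's unit coherences for $\alpha_{I,B,C}$ and $\alpha_{A,B,I}$ with the correct direction conventions. Everything else is formal.
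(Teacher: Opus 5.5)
Your proof is correct and takes essentially the paper's route. The forward direction comes from the preceding lemma and corollaries; for the converse, the paper cites the correspondence between semicartesian and monoidal categories with projections (your explicit $\pi_1^{A,B}=\rho_A\circ(A\otimes !_B)$ and $\pi_2^{A,B}=\lambda_B\circ(!_A\otimes B)$) together with Mac Lane's coherence theorem, while you spell out the checks of axioms (2)--(5) and the round trip, using Kelly's unit lemmas for $\alpha_{I,B,C}$, $\alpha_{A,B,I}$ and the symmetry/unit identity as the concrete instances of that coherence.
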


\begin{proof}
We have already shown how these give rise to symmetric monoidal categories
with a terminal unit and jointly monic projections (the subproduct
property). Also recall here the correspondence between semicartesian
categories and monoidal categories with projections, the dual of \cite[Theorem 3.5]{semicart}.
The converse, that all relevant coherence axioms for a subcartesian
monoidal category are satisfied is an instance of the coherence theorem
for symmetric monoidal categories \cite{MacLane}.
\end{proof}

\begin{rem}
It is often useful to describe the subproduct property in terms of
representables as $\mathcal{E}\left(-,X\otimes Y\right)\hookrightarrow\mathcal{E}\left(-,X\right)\times\mathcal{E}\left(-,Y\right)$
especially in the setting of convolution structures \cite{dayconvolution}.
\end{rem}

The following examples are of the dual form, where one has a monoidal
structure on $\mathcal{E}$ with an initial unit and jointly epic
coprojections into the tensor product, thereby giving a subcartesian
monoidal structure on $\mathcal{E}^{\textnormal{op}}$. 
\begin{example}
A cover relation on a category $\mathcal{E}$ satisfying Janelidze's
axioms \cite{CoverRel} gives rise to a subcartesian monoidal structure
on $\mathcal{E}^{\textnormal{op}}$.
\end{example}

\begin{example}
Consider the category $\mathbf{Mnd}_{f}\left(\mathbf{Set}\right)$
of finitary monads on $\mathbf{Set}$. Here the commuting tensor product
of monads defines a symmetric subcartesian monoidal structure on $\mathbf{Mnd}_{f}\left(\mathbf{Set}\right)^{\textnormal{op}}$
with the identity monad as the monoidal unit.
\end{example}

\section{Locally subcartesian categories\label{locsubcart}}

Whilst it may seem best to define locally subcartesian categories
in terms of the name (meaning in terms of the subcartesian monoidal
structures on each slice category, or as an indexed subcartesian monoidal
category) this would not give the simplest definition. This is since
such a definition involves superfluous data as the monoidal structures
may be constructed from the base change. Nevertheless, we will later
give an equivalent formulation closer to this viewpoint. We instead
use the following simple formulation. Moreover, we assume without
loss of generality that units are strictly respected, justified by
the standard equivalence of pseudofunctors to strictly normal ones.
We still use the notation $\otimes$ as in monoidal categories, but
with the understanding that we must now specify the base in addition
to the usual left and right inputs. Throughout this section we establish
existence by coherence\emph{ }rather than by a universal property.
\begin{defn}
\label{locsemidef} A \emph{locally symmetric subcartesian category}
$\left(\mathcal{E},\otimes\right)$ is a category $\mathcal{E}$ equipped
with for each cospan $f\colon A\to X$ and $g\colon B\to X$ in $\mathcal{E}$
a chosen subpullback as below
\[
\xymatrix@=1em{A\otimes_{X}B\ar[d]_{g'}\myar{f'}{rr} &  & B\ar[d]^{g}\\
A\ar[rr]_{f} &  & X
}
\]
symmetric in $f$ and $g$\footnote{Symmetric means we have isomorphisms $A\otimes_{X}B\cong B\otimes_{X}A$
coherent with the projections, and such a symmetry is necessarily
unique here.} and subject to the coherence conditions:
\begin{itemize}
\item (nullary composites) if $g$ is the identity then so is $g'$\footnote{It is standard to ask identities are strictly respected for convenience,
especially in settings involving spans and polynomials.};
\item (binary composites) for two subpullback squares composed as below
\begin{equation}
\xymatrix@=1em{X\otimes_{Y}\left(Y\otimes_{Z}B\right)\ar[d]_{g''}\myar{p'}{rr} &  & Y\otimes_{Z}B\ar[d]_{g'}\myar{q'}{rr} &  & B\ar[d]^{g}\\
X\ar[rr]_{p} &  & Y\ar[rr]_{q} &  & Z
}
\label{binarysubcartesian}
\end{equation}
the resulting outside square is isomorphic to the chosen subpullback,
coherent with the projections;
\end{itemize}
Throughout, we call a commuting square a subpullback if it is (necessarily
uniquely) isomorphic to the chosen subpullback of its cospan.

\end{defn}

\begin{rem}
Note that as we are giving the most general definition we are not
allowed to assume the existence of a terminal object. Indeed, there
are examples of locally cartesian closed categories which are not
cartesian closed, and thus have no terminal object. One simple example
is the poset given by the Lawvere quantale with zero removed, which
is both a lccc and lsccc.
\end{rem}

\begin{rem}
In the non-symmetric setting one needs to add the nullary and binary
axioms for vertical composition. Here it becomes apparent that the
legs of subpullbacks play a different role, namely the horizontal
property gives pseudo-functoriality of base change whereas the vertical
property is needed to define base change on morphisms in slice categories.
\end{rem}

The reader will notice the definition does not require the category
to have pullbacks, and indeed examples such as the following do not
have pullbacks.
\begin{example}
\label{FI} The category of finite sets and functions $\mathbf{FinSet}$
has pushouts (and in this category injections are stable under pushout),
and so $\mathbf{FinSet}^{\textnormal{op}}$ has pullbacks (with opposite-injections
stable under pullback). Thus in the category $\mathbf{FI}^{\textnormal{op}}$
of finite sets and opposite-injections we may define the subpullback
of any cospan to be a chosen pullback square in the ambient category
$\mathbf{FinSet}^{\textnormal{op}}$. Here the ambient uniqueness
of the mediating map is retained through the faithful functor $\mathbf{FI}^{\textnormal{op}}\to\mathbf{FinSet}^{\textnormal{op}}$,
but existence of the mediating map inside $\mathbf{FI}^{\textnormal{op}}$
is not retained, since it need not be an opposite-injection. The coherence
of the pullback squares is also retained.
\end{example}

\begin{example}
Consider the category $\mathbf{FinProb}$ of finite probability spaces,
whose objects are finite positive probability distributions $\left(X,p_{X}\right)$
and morphisms $f\colon\left(A,p_{A}\right)\to\left(X,p_{X}\right)$
are functions on the underlying sets satisfying $p_{X}\left(x\right)=\sum_{a\in f^{-1}\left(x\right)}p_{A}\left(a\right)$.
Given an additional $g\colon\left(B,p_{B}\right)\to\left(X,p_{X}\right)$
the canonical candidate square is given by the pullback of underlying
sets $A\times_{X}B$ with the probability distribution $p\left(a,b\right)=p_{A}\left(a\right)p_{B}\left(b\right)/p_{X}\left(f\left(a\right)\right)$
and the canonical projection maps respect the distribution. Note that
this only gives a subpullback in $\mathbf{FinProb}$ since an underlying
induced mediating function into the pullback of sets does not respect
the probability distribution in general. This is an example of conditional
independence structure as studied by Simpson \cite{SimpsonInd}.
\end{example}

For the interested reader we include the more general notion without
uniqueness, though such categories will be less important due to their
ill behaved logic. Moreover this lack of uniqueness of mediating morphisms
means this notion requires additional coherence axioms.
\begin{defn}
\label{locsemidefnotuni} A \emph{locally semicartesian category }is
a category $\mathcal{E}$ equipped with for each cospan $f\colon A\to X$
and $g\colon B\to X$ in $\mathcal{E}$ a chosen commuting square
(called a chosen semipullback\footnote{This name is justified by such squares corresponding to the semicartesian
structures on slice categories.}) as below
\[
\xymatrix@=1em{A\otimes_{X}B\ar[d]_{g'}\myar{f'}{rr} &  & B\ar[d]^{g}\\
A\ar[rr]_{f} &  & X
}
\]
such that if $f$ is the identity, then so is $f'$ and if $g$ is
the identity then so is $g'$ along with horizontal binary coherence
isomorphisms, which are the identity on both composites
\[
\xymatrix@=1em{P\ar[rr]\ar[d] &  & B\ar[rr]^{\textnormal{id }}\ar[d] &  & B\ar[d] &  &  & P\ar[rr]^{\textnormal{id }}\ar[d] &  & P\ar[rr]^{\textnormal{}}\ar[d] &  & B\ar[d]\\
A\ar[rr] &  & X\ar[rr]_{\textnormal{id}} &  & X &  &  & A\ar[rr]_{\textnormal{id}} &  & A\ar[rr]_{\textnormal{}} &  & X
}
\]
and such that the two ways of constructing the coherence isomorphisms
between a triple of chosen squares and corresponding outside chosen
square
\[
\xymatrix@=1em{A\ar[rr]\ar[d] &  & B\ar[rr]\ar[d] &  & C\ar[rr]\ar[d] &  & D\ar[d] &  &  &  & A\ar[rrrrrr]\ar[d] &  &  &  &  &  & D\ar[d]\\
W\ar[rr] &  & X\ar[rr] &  & Y\ar[rr] &  & Z &  &  &  & W\ar[rr] &  & X\ar[rr] &  & Y\ar[rr] &  & Z
}
\]
give\footnote{One may combine the left two squares or the right two first.}
the same result, with vertical binary coherence isomorphisms satisfying
the analogue of the above. Moreover we require that the two ways of
constructing the coherence isomorphisms between the left $2\times2$
grid and right square\footnote{One may compose horizontally and then vertically, or vertically and
then horizontally.}
\[
\xymatrix@=1em{A\ar[rr]\ar[d] &  & B\ar[rr]\ar[d] &  & C\ar[d] &  &  &  & A\ar[rrrr]\ar[dd] &  &  &  & C\ar[d]\\
P\ar[rr]\ar[d] &  & Q\ar[rr]\ar[d] &  & R\ar[d] &  &  &  &  &  &  &  & R\ar[d]\\
X\ar[rr] &  & Y\ar[rr] &  & Z &  &  &  & X\ar[rr] &  & Y\ar[rr] &  & Z
}
\]
 give the same result.
\end{defn}

\begin{rem}
These categories contain three main types of squares equipped with
extra data. Namely the chosen semipullbacks (with the identity map
on the apex), semipullbacks (which are squares equipped with a coherent
isomorphism to the chosen square), and those squares equipped with
a possibly non-invertible factorization through the chosen square.
\end{rem}

\begin{rem}
The symmetric case is similar to the above, but with additional symmetry
isomorphisms $\gamma^{A,B}_{X}\colon A\otimes_{X}B\cong B\otimes_{X}A$
coherent with the projections. These must horizontally respect naturality,
unitality and associativity. The vertical coherence data is then constructed
from the horizontal coherence data and this symmetry (or vice versa),
and the interchange property becomes redundant.
\end{rem}

\begin{rem}
\label{counitmap} It is worth noting that for any $p\colon E\to B$
and $f\colon A\to B$ we have a counit candidate $\epsilon_{p}\colon\Sigma_{p}\nabla_{p}\left(f\right)\to f$
given by the top morphism of the chosen square, but no diagonal maps
or unit in general.
\end{rem}

\subsection{Coherent subpullbacks}

We will now show that locally subcartesian categories give rise to
an affine base change pseudofunctor $\nabla_{\left(-\right)}\colon\mathcal{E}^{\textnormal{op}}\to\mathbf{Cat}$.
Whilst the following argument may be viewed as giving basic properties
of the cartesian lifts of the corresponding fibration, staying with
the pseudofunctor viewpoint throughout the paper will be clearer.
We start with the following simple but useful property, namely that
we still have an analogue of the so called pullback lemma. This fact
was noticed by Lawvere \cite{lawvere1973perugia}.
\begin{lem}
[Subpullback lemma]\label{subpasting} For a locally subcartesian
category $\mathcal{E}$, and given diagram
\[
\xymatrix@=1em{S\ar[d]_{g''}\myar{p'}{rr} &  & T\ar[d]_{g'}\myar{q'}{rr} &  & B\ar[d]^{g}\\
X\ar[rr]_{p} &  & Y\ar[rr]_{q} &  & Z
}
\]
where the rightmost square is a chosen subpullback and the left square
commutes, the left square is (canonically isomorphic to) a chosen
subpullback if and only if the outside square is.
\end{lem}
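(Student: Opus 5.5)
We want: the right square is the chosen subpullback of $(q,g)$, so $T\cong Y\otimes_{Z}B$ with $g'$, $q'$ the chosen projections, and the left square commutes. The claim is that the left square is a subpullback exactly when the outer square is. The plan is to use the binary composite axiom of Definition \ref{locsemidef} in both directions. In the reverse direction we add the subproperty: a square that factors through a chosen subpullback by a map compatible with the legs is uniquely determined by that factorization.

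($\Rightarrow$) Suppose the left square is isomorphic, coherently with its projections, to the chosen subpullback $X\otimes_{Y}T$ of $(p,g')$. Since $T=Y\otimes_Z B$ up to the canonical isomorphism, the left square is isomorphic to $X\otimes_Y(Y\otimes_Z B)$. By the binary composite axiom, the pasted outer square of \eqref{binarysubcartesian} is isomorphic to the chosen subpullback $X\otimes_Z B$ of $(qp,g)$, coherently with the projections. Composing these isomorphisms, we get $S\cong X\otimes_Z B$, compatible with $g''$ and $q'p'$. This is the required statement, and the direction is essentially immediate.

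($\Leftarrow$) Suppose the outer square is a subpullback, so $u\colon S\cong X\otimes_{Z}B$ compatibly with $g''$ and $q'p'$. Let $P:=X\otimes_{Y}T$ be the chosen subpullback of $(p,g')$ with projections $g'''\colon P\to X$ and $p''\colon P\to T$. By the binary composite axiom, $P$ with $g'''$ and $q'p''$ is isomorphic to $X\otimes_Z B$, giving $v\colon P\cong X\otimes_Z B$. Put $w:=v^{-1}u\colon S\cong P$. By construction $g'''w=g''$ and $q'p''w=q'p'$.

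The main obstacle is the remaining equation $p''w=p'$, since $w$ was only built to respect the projection to $B$, not the one to $T$. We resolve it with the subpullback uniqueness of the right square. Both $p''w$ and $p'$ are maps $S\to T$ with the same composite $q'(-)$ into $B$. Their composites $g'(-)$ into $Y$ also agree: $g'p''w=pg'''w=pg''=g'p'$, using commutativity of the chosen left square and of the given left square. So both are mediating maps into the subpullback $T$ for the same cone $(pg'',\,q'p')$ over $(q,g)$, and by uniqueness they are equal. Thus $w$ is an isomorphism of squares, and the left square is a subpullback. The canonicity of the isomorphism follows from the uniqueness of such isomorphisms noted after Definition \ref{locsemidef}, since chosen subpullbacks are subterminal cones.
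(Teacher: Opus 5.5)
Your proof is correct and matches the paper's argument. The forward direction is the binary composite axiom, which the paper simply calls ``by definition''. For the converse, you compare $S$ with the chosen subpullback $X\otimes_{Y}T$ through their common identification with $X\otimes_{Z}B$, and then recover the missing equation $p''w=p'$ from uniqueness of mediating maps into the right-hand subpullback, exactly as the paper does with its isomorphism $\alpha$.
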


\begin{proof}
We need only check that assuming the outside is a chosen subpullback,
the left square must be (as the converse is by definition). Under
this assumption we note that the outside square is coherently isomorphic
to the composite
\[
\xymatrix@=1em{\overline{S}\ar[d]_{\overline{g''}}\myar{\overline{p'}}{rr} &  & T\ar[d]_{g'}\myar{q'}{rr} &  & B\ar[d]^{g}\\
X\ar[rr]_{p} &  & Y\ar[rr]_{q} &  & Z
}
\]
where the left square is the chosen subpullback of $p$ and $g'$
(since both are the chosen subpullback of $g$ and $qp$). This gives
an isomorphism $\alpha\colon S\to\overline{S}$ with $q'\cdot\overline{p'}\cdot\alpha=q'\cdot p'$
and $\overline{g''}\cdot\alpha=g''$ and we may deduce $\overline{p'}\cdot\alpha=p'$
since the right square is a subpullback.
\end{proof}

\begin{rem}
In a similar setting, Simpson \cite{SimpsonInd} took this property
as an assumption (though without assuming uniqueness of mediating
morphisms). In this case without uniqueness (with ``semipullbacks''),
there is no reason for the coherence property on the outside square
to upgrade to one on the left square.
\end{rem}

The following is a slight improvement to the subpullback lemma which
we will use regularly.
\begin{lem}
\label{spb1} For a locally subcartesian category $\mathcal{E}$,
and given diagram
\[
\xymatrix@=1em{A\ar@{..>}[rr]_{p'}\ar[d]_{g''}\ar@/^{1pc}/[rrrr]^{\left(qp\right)'} &  & B\ar[rr]_{q'}\ar[d]_{g'} &  & C\ar[d]^{g}\\
X\ar[rr]_{p} &  & Y\ar[rr]_{q} &  & Z
}
\]
where the outside square and right square are subpullbacks, there
exists a unique arrow $p'$ making the diagram commute, and consequently
the left square a subpullback.
\end{lem}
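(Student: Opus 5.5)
We construct $p'$ by composing the binary coherence isomorphism with the chosen subpullback of $p$ and $g'$, and then obtain uniqueness from the subproduct property of the right square.

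\emph{Existence.} Since the right square is a subpullback, we may replace it by the chosen subpullback of the cospan $(q,g)$, transporting $q'$ and $g'$ along the canonical isomorphism. Form the chosen subpullback of $p$ and $g'$:
\[
\xymatrix@=1em{\overline{A}\ar[d]_{\overline{g''}}\myar{\overline{p'}}{rr} &  & B\ar[d]^{g'}\\
X\ar[rr]_{p} &  & Y
}
\]
By the binary composites axiom of Definition \ref{locsemidef}, pasting this square with the right square gives a square isomorphic, coherently with the projections, to the chosen subpullback of $qp$ and $g$. The outside square of the statement is also such a subpullback, so composing the two isomorphisms yields an isomorphism $\alpha\colon A\to\overline{A}$ with
\[
\overline{g''}\alpha=g'' \quad\text{and}\quad q'\overline{p'}\alpha=(qp)'.
\]
Set $p':=\overline{p'}\alpha$. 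Then $q'p'=(qp)'$, and $g'p'=g'\overline{p'}\alpha=p\overline{g''}\alpha=pg''$, so the whole diagram commutes. The left square is isomorphic via $\alpha$ to the chosen subpullback of $p$ and $g'$, hence is a subpullback. Alternatively, once commutativity is established, this last conclusion follows directly from Lemma \ref{subpasting}.

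\emph{Uniqueness.} Suppose $p_{1},p_{2}\colon A\to B$ both make the diagram commute. Then
\[
q'p_{1}=(qp)'=q'p_{2} \quad\text{and}\quad g'p_{1}=pg''=g'p_{2},
\]
so $p_{1}$ and $p_{2}$ are both mediating morphisms from the cone $\left(A,(qp)',pg''\right)$ into the right square. Since the right square is a subpullback, i.e.\ a sublimit of its cospan, mediating morphisms are unique, so $p_{1}=p_{2}$.

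The only delicate point is the bookkeeping in the existence step: we must check that the isomorphism $\alpha$ respects the projections $g''$ and $(qp)'$, which is precisely what ``coherent with the projections'' in the binary composites axiom provides. Everything else is routine diagram chasing.
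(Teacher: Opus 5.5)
Your proposal is correct and is essentially the paper's proof: the paper also takes $p'=\overline{p'}\cdot\alpha$, where $\alpha$ comes from the construction in the proof of Lemma \ref{subpasting} (your existence step simply writes that construction out), and gets uniqueness from the subpullback property of the right square. The one thing you make explicit that the paper leaves implicit is the preliminary reduction to the case where the right square is the chosen subpullback.
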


\begin{proof}
Take $p'$ to be the $\overline{p'}\cdot\alpha$ as in Lemma \ref{subpasting},
which is a unique filler as the right square is a subpullback.
\end{proof}

Another simple but useful property is that isomorphisms of subpullbacks
respect composition.
\begin{lem}
\label{spb2} Given another diagram as in \eqref{binarysubcartesian}
\[
\xymatrix@=1em{M\ar[d]_{g^{**}}\myar{p^{*}}{rr} &  & N\ar[d]_{g^{*}}\myar{q^{*}}{rr} &  & C\ar[d]^{g}\\
X\ar[rr]_{p} &  & Y\ar[rr]_{q} &  & Z
}
\]
we have unique isomorphisms $\alpha$ and $\beta$ giving a commuting
diagram
\begin{equation}
\xymatrix@=1em{A\ar@/_{1pc}/[dd]_{g''}\myar{p'}{rrr}\ar@{..>}[d]^{\beta} &  &  & B\ar@/_{1pc}/[dd]\sb(.7){g'}\myar{q'}{rrrd}\ar@{..>}[d]^{\alpha}\\
M\ar[d]^{g^{**}}\myar{p^{*}}{rrr} &  &  & N\ar[d]^{g^{*}}\myar{q^{*}}{rrr} &  &  & C\ar[d]^{g}\\
X\ar[rrr]_{p} &  &  & Y\ar[rrr]_{q} &  &  & Z
}
\label{bincoh}
\end{equation}
\end{lem}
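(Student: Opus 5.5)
We need to produce two isomorphisms. Here the top row $A\xrightarrow{p'}B\xrightarrow{q'}C$ over $X\to Y\to Z$ is a pair of subpullback squares as in \eqref{binarysubcartesian}, and the row $M\to N\to C$ is another such pair. The plan is to build $\alpha$ first and then $\beta$. Both right-hand squares are subpullbacks of the same cospan $(q,g)$, so each is (uniquely) isomorphic to the chosen subpullback of $q$ and $g$. Composing one such isomorphism with the inverse of the other gives an isomorphism $\alpha\colon B\to N$. It satisfies $g^{*}\alpha=g'$ and $q^{*}\alpha=q'$, and it is unique because $N$, $g^{*}$, $q^{*}$ form a subpullback: any mediating map into it is unique.

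For $\beta$ we use the composite squares. By the binary coherence axiom, the outer square of the top row is a subpullback of the cospan $(qp,g)$. By the same axiom, the outer square of the bottom row is also a subpullback of $(qp,g)$. Hence there is an isomorphism $\beta\colon A\to M$ with $g^{**}\beta=g''$ and $q^{*}p^{*}\beta=q'p'$, unique since $M$ with $g^{**}$ and $q^{*}p^{*}$ is a sublimit.

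The remaining condition, which is the key step, is commutativity of the middle square: $p^{*}\beta=\alpha p'$. Both are maps $A\to N$, so it suffices to check that they agree after composing with the jointly monic pair $(g^{*},q^{*})$, using the subpullback property of the square $N\to C$ over $Y\to Z$. For the first leg, $g^{*}p^{*}\beta=p\,g^{**}\beta=p\,g''=g'p'=g^{*}\alpha p'$. For the second leg, $q^{*}p^{*}\beta=q'p'=q^{*}\alpha p'$. Uniqueness of mediating morphisms into $N$ then gives $p^{*}\beta=\alpha p'$.

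The only subtle point is that $\beta$ must come from the outer squares rather than from the left squares directly. The left squares are subpullbacks over different cospans, $(p,g')$ and $(p,g^{*})$, so they cannot be compared directly. Once $\beta$ is taken from the outer squares, the middle square is forced by the joint-monicity argument above; alternatively, this step can be obtained from Lemma \ref{spb1}.
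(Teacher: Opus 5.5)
Your proof is correct, and you construct $\alpha$ the same way the paper does; you obtain $\beta$ by a different route.

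\textbf{The paper's route.} The paper applies the vertical version of Lemma \ref{spb1} to the left column of \eqref{bincoh}. The square $(A,p',g'')$ is a subpullback of $(p,g')=(p,g^{*}\alpha)$, and $(M,p^{*},g^{**})$ is a subpullback of $(p,g^{*})$. Hence there is a unique $\beta$ with $g^{**}\beta=g''$ and $p^{*}\beta=\alpha p'$, so the middle square commutes by construction.

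\textbf{Your route.} You compare the two outer horizontal composites, which are both subpullbacks of $(qp,g)$ by the binary axiom. You then recover $p^{*}\beta=\alpha p'$ from the joint monicity of $(g^{*},q^{*})$. This is exactly the argument inside the proof of Lemma \ref{subpasting}, written out in place.

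\textbf{Comparison.} Your approach uses only horizontal composition of subpullbacks, so you never need the symmetry to justify a vertical version of Lemma \ref{spb1}. It also makes the invertibility of $\beta$ immediate, because $\beta$ is a comparison between two subpullbacks of the same cospan. The paper's approach gets commutativity of the middle square for free but leaves the invertibility of $\beta$ implicit. That invertibility follows, for instance, from your observation that the same $\beta$ also mediates between the two outer squares.
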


\begin{proof}
We construct $\alpha$ from the isomorphism of subpullbacks of $q$
and $g$ (unique since the right is a subpullback) and we then find
a unique $\beta$ from applying the vertical version of Lemma \ref{spb1}
to the composite on the left. 
\end{proof}

We now establish that this gives rise to a base change functor which
behaves as one would expect. The argument and results are the same
as in the standard setting of pullbacks, with the exception that we
must establish the existence of all relevant data manually (since
we do not have the existence property). We also make the observation
that pseudofunctoriality of the affine base change comes from horizontal
composition of squares, whilst the action on morphisms of a slice
category comes from vertical composition. It is thus apparent that
the horizontal and vertical morphisms of a subpullback play a different
role in the theory.
\begin{lem}
\label{affps} Taking subpullbacks along a morphism $f\colon X\to Y$
defines a functor $\nabla_{f}\colon\mathcal{E}/Y\to\mathcal{E}/X$.
Moreover, this data defines a pseudofunctor $\nabla_{\left(-\right)}\colon\mathcal{E}^{\textnormal{op}}\to\mathbf{Cat}$.
\end{lem}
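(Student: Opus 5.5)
To define $\nabla_{f}$ on objects, I send $g\colon B\to Y$ to the left leg $g'\colon X\otimes_{Y}B\to X$ of the chosen subpullback of $f$ and $g$. For a morphism $h\colon g_{1}\to g_{2}$ in $\mathcal{E}/Y$, with $g_{2}h=g_{1}$, I stack the chosen subpullback of $g_{2}$ along $h$ (the vertical direction, via symmetry) on top of the chosen subpullback of $f$ and $g_{2}$. By the binary composite axiom, applied vertically through the symmetry, this stacked rectangle is a subpullback of $f$ and $g_{2}h=g_{1}$. The vertical form of Lemma \ref{spb1}, with the lower square the subpullback of $f,g_{2}$ and the outer rectangle the chosen subpullback of $f,g_{1}$, then gives a unique arrow $\nabla_{f}(h)\colon X\otimes_{Y}B_{1}\to X\otimes_{Y}B_{2}$ over $X$ that commutes with the top projections. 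Equivalently, $\nabla_{f}(h)$ is the unique map satisfying $g_{2}'\cdot\nabla_{f}(h)=g_{1}'$ and $f_{2}'\cdot\nabla_{f}(h)=h\cdot f_{1}'$. The point is that its existence comes from coherence, while its uniqueness comes from the subpullback property of the square for $f,g_{2}$.

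Functoriality then follows from uniqueness. The identity $\textnormal{id}_{X\otimes_{Y}B}$ satisfies the defining equations for $\nabla_{f}(\textnormal{id})$. Likewise $\nabla_{f}(k)\nabla_{f}(h)$ satisfies those for $\nabla_{f}(kh)$. Since the square for $f$ and the codomain is a subpullback, any two fillers agree.

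For the pseudofunctor, I take $\nabla_{\textnormal{id}_{Y}}=\textnormal{id}$ strictly, using the nullary axiom and symmetry, which make the chosen subpullback along an identity have identity legs. For a composable pair $p\colon X\to Y$ and $q\colon Y\to Z$, the binary composite axiom gives, for each $g$, an isomorphism $\nabla_{p}\nabla_{q}(g)\cong\nabla_{qp}(g)$ over $X$ compatible with the projections. Lemma \ref{spb2} shows that this isomorphism is the unique one with that property. Naturality in $g$ follows because both composites of a naturality square are fillers into the subpullback for $qp$ and $g_{2}$, and so coincide. The associativity and unit coherence conditions follow the same way: both sides of each required equation are isomorphisms between two subpullbacks of the same cospan that respect the projections, and such isomorphisms are unique.

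The main obstacle is the first step, producing $\nabla_{f}(h)$, since there is no universal existence property to appeal to. It needs the vertical pasting argument, where symmetry is used to transport the horizontal binary composite axiom to vertical stacking, and Lemma \ref{spb1} is used in its vertical form. The remaining steps are routine consequences of uniqueness of fillers.
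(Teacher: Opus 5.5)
Your proposal is correct and follows essentially the same route as the paper: the filler you obtain from the vertical form of Lemma \ref{spb1} is exactly the paper's $\nabla_{f'}(p)\cdot\alpha$, built from the rotated composite of chosen subpullbacks, and, as in the paper, functoriality, the binary constraints (with strict units), and the coherence axioms all follow from uniqueness of mediating morphisms. The only slip is one of wording: the square you stack is the chosen subpullback of $h$ against the top leg $f_2'$ of the square for $f,g_2$, not ``of $g_2$ along $h$''.
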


\begin{proof}
On each object $h\colon H\to Y$ the map $\nabla_{f}\left(h\right)$
is given by the subpullback of $h$ along $f$. Given a morphism $p\colon h\to h'$
in the slice category meaning $h'p=h$ we may use composition of chosen
subpullbacks to form the (rotated) diagram
\[
\xymatrix@=1em{S\ar[rd]^{\alpha}\ar@/^{0.7pc}/[rrrrrrrd]^{\nabla_{f}\left(h\right)}\ar@/_{1pc}/[rdd]\\
 & \overline{S}\ar[d]^{f''}\myard{\nabla_{f'}\left(p\right)}{rrr} &  &  & T\ar[d]_{f'}\myard{\nabla_{f}\left(h'\right)}{rrr} &  &  & B\ar[d]^{f}\\
 & X\ar[rrr]_{p} &  &  & Y\ar[rrr]_{h'} &  &  & Z
}
\]
where $\alpha$ is the unique filler and is invertible. Hence $\nabla_{f}\left(p\right)$
is given by $\nabla_{f'}\left(p\right)\cdot\alpha\colon\nabla_{f}\left(h\right)\to\nabla_{f}\left(h'\right)$.
Note if $p$ is the identity on $h$ then $\nabla_{f'}\left(p\right)\cdot\alpha$
is the identity by uniqueness of fillers. Similarly that $\nabla_{f}\left(p_{2}p_{1}\right)=\nabla_{f}\left(p_{2}\right)\cdot\nabla_{f}\left(p_{1}\right)$
follows from the subpullback property of the rightmost square in the
relevant calculation.

The binary pseudofunctoriality constraints are constructed from essentially
the same diagram, and the unit constraints may be taken to be strict.
The pseudofunctor coherence axioms follow from uniqueness of mediating
morphisms.
\end{proof}

\begin{rem}
One may apply the Grothendieck construction to this affine base change
pseudofunctor, obtaining a fibration $\textnormal{cod}\colon\left[2,\mathcal{E}\right]_{\textnormal{fspb}}\to\mathcal{E}$
where the total category has the usual objects given by maps of $\mathcal{E}$
and morphisms given by those squares which factor through the chosen
subpullback\footnote{Another option is to define a subfibration to be a functor equipped
with a coherent choice of subcartesian lifts (with uniqueness but
not existence). Then in this setting codomain is a subfibration on
the full arrow category $\left[2,\mathcal{E}\right]$.}. In the semipullback setting, the morphisms of $\left[2,\mathcal{E}\right]_{\textnormal{fsmpb}}$
are squares equipped with a specified factorization through the semipullback.
In analogy with Simpson's work \cite{SimpsonProbSheaf} our subpullbacks
correspond to independent pullbacks and these subpullback-factoring
squares correspond to independent squares.
\end{rem}

Before extending to an indexed monoidal functor, we must consider
the Beck data.
\begin{prop}
[$\Sigma$-$\nabla$ Beck comparison]\label{beck1} Every locally subcartesian
category comes equipped with, for every commuting square which factors
through the canonical subpullback as on the left below:
\[
\xymatrix@=1em{P\ar[rr]^{p'}\ar[d]_{f'} &  & B\ar[d]^{f} &  &  &  &  & \mathcal{E}/P\ar[d]_{\Sigma_{f'}}\dtwocell[0.5]{rdr}{\mathfrak{b}} &  & \mathcal{E}/B\ar[d]^{\Sigma_{f}}\myard{\nabla_{p'}}{ll}\\
A\ar[rr]_{p} &  & X &  &  &  &  & \mathcal{E}/A &  & \mathcal{E}/X\myar{\nabla_{p}}{ll}
}
\]
a canonical natural transformation (called Beck data) as on the right
above, coherent with respect to nullary and binary pastings. This
transformation is invertible if and only if the original square is
a subpullback.
\end{prop}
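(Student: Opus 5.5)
The plan is to build $\mathfrak{b}$ componentwise from the given factorization and the composition of chosen subpullbacks. Let $u\colon P\to A\otimes_{X}B$ be the specified factorization of the square through the chosen subpullback of $p$ and $f$. It is unique by the subpullback property, so $\mathfrak{b}$ will be canonical.

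For an object $h\colon H\to B$ of $\mathcal{E}/B$, I will form the chosen subpullback of $h$ along $f^{*}\colon A\otimes_{X}B\to B$ (the top leg). By the binary composite axiom, pasting it with the chosen subpullback of $f$ along $p$ gives a square isomorphic to the chosen subpullback of $fh$ along $p$. This yields a canonical isomorphism $\Sigma_{f^{*}_{A}}\nabla_{f^{*}}(h)\cong\nabla_{p}\Sigma_{f}(h)$ over $A$, where $f^{*}_{A}$ is the left leg. Next, $\nabla_{p'}(h)$ is the chosen subpullback of $h$ along $p'=f^{*}u$. Using pseudofunctoriality from Lemma \ref{affps}, this is isomorphic to $\nabla_{u}\nabla_{f^{*}}(h)$. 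The counit candidate of Remark \ref{counitmap} then gives a map $\Sigma_{u}\nabla_{u}\nabla_{f^{*}}(h)\to\nabla_{f^{*}}(h)$ over $A\otimes_{X}B$. Applying $\Sigma_{f^{*}_{A}}$ and composing with the isomorphism above defines $\mathfrak{b}_{h}\colon\Sigma_{f'}\nabla_{p'}(h)\to\nabla_{p}\Sigma_{f}(h)$.

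Naturality in $h$ will follow from the subpullback property of the chosen square for $fh$ and $p$. Any two candidate composites agree after postcomposing with both projections, so they are equal. Coherence with nullary pastings (squares with identity sides, where $u$ is an identity) and with binary horizontal and vertical pastings will be handled the same way. Both sides of each required equation are fillers into a chosen subpullback, which has uniqueness of mediating morphisms. The pastings of factorizations are matched using Lemma \ref{spb2} and Lemma \ref{spb1}.

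For the invertibility claim, suppose first that the square is a subpullback. Then $u$ is an isomorphism, and the counit of $\Sigma_{u}\dashv\nabla_{u}$ is invertible, since the subpullback of anything along an isomorphism is the evident composite square. Hence $\mathfrak{b}$ is invertible. Conversely, suppose $\mathfrak{b}$ is invertible, and evaluate at $h=\textnormal{id}_{B}$. By the nullary axiom $\nabla_{p'}(\textnormal{id}_{B})=\textnormal{id}_{P}$, so $\Sigma_{f'}\nabla_{p'}(\textnormal{id})=f'$ and $\nabla_{p}\Sigma_{f}(\textnormal{id})=\nabla_{p}(f)$. Tracing through the construction, the component $\mathfrak{b}_{\textnormal{id}}$ is exactly $u$, so $u$ is invertible and the square is a subpullback.

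The main obstacle is checking that the counit candidate is a genuine map and that it is natural. It must be built from the factorization through a chosen subpullback, since there is no universal property. I would reduce every such identification to comparing projections into a chosen subpullback and invoking uniqueness, together with Lemma \ref{spb1} to produce the needed fillers.
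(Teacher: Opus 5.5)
Your proposal is correct and is essentially the paper's own construction. The component $\mathfrak{b}_h$ is the top map of the subpullback along the factorization $u$, which is the paper's map $Q\to Q_s$; you obtain it via pseudofunctoriality and the counit candidate. It is followed by the vertical-pasting isomorphism $\mathfrak{b}_{h,s}$, with naturality and coherence coming from uniqueness of fillers, and the converse from evaluating at $h=\textnormal{id}_B$, where the component reduces to $u$. The only imprecision is calling that map ``the counit of $\Sigma_u\dashv\nabla_u$'', which makes sense only because $u$ is invertible in that step: there its top map is split epi by pseudofunctoriality and monic by the subpullback property, so the square is an honest pullback.
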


\begin{proof}
Suppose we are given such a commuting square which factors through
the subpullback denoted $p'_{s}$ and $f'_{s}$ via a mediating map
$\theta$. Given a morphism $h\colon M\to B$ we form the left diagram
below
\[
\xymatrix@=1em{Q\ar[d]_{\nabla_{p'}\left(h\right)}\ar[rr] &  & Q_{s}\ar[rr]^{p''_{s}}\ar[d]_{\nabla_{p'_{s}}\left(h\right)} &  & M\ar[d]^{h} &  &  &  & Q\ar[d]_{\nabla_{p'}\left(h\right)}\ar[rr] &  & Q_{s}\myar{\mathfrak{b}_{h,s}}{rdr}\ar[d]_{\nabla_{p'_{s}}\left(h\right)}\ar@/^{0.7pc}/[rrdrr]^{p''_{s}}\\
P\ar[rr]_{\theta}\ar@/_{0.5pc}/[rrd]_{f'} &  & P_{s}\ar[rr]|-{p'_{s}}\ar[d]_{f'_{s}} &  & B\ar[d]^{f} &  &  &  & P\ar[rr]_{\theta}\ar@/_{0.7pc}/[rrrrd]_{f'} &  & P_{s}\ar@/_{0.4pc}/[drr]^{f'_{s}} &  & A\otimes_{X}M\myard{\hat{p}}{rr}\ar[d]^{\nabla_{p}\left(fh\right)} &  & M\ar[d]^{fh}\\
 &  & A\ar[rr]_{p} &  & X &  &  &  &  &  &  &  & A\ar[rr]_{p} &  & X
}
\]
The Beck data component at $h$ denoted $\mathfrak{b}_{h}$ is then
given by the composite of the isomorphism $\mathfrak{b}_{h,s}$ (induced
by uniqueness of the subpullback of $fh$ and $p$)  and $Q\to Q_{s}$.
Coherence is a consequence of the uniqueness of such mediating morphisms.
Clearly if we start with a subpullback meaning $\theta$ is invertible
then so is $Q\to Q_{s}$. The converse direction is recovered from
taking $h$ to be the identity morphism.
\end{proof}

\begin{rem}
\label{necunique} An important point about Beck data is that it is
not merely just some coherence data that we must give; it is in fact
a universal construction. For any pair of objects $X$ and $Y$ in
a locally subcartesian category $\mathcal{E}$ with a terminal object
the inclusion of normal form composites $\Sigma_{f}\nabla_{g}$ into
arbitrary composites of $\Sigma_{p}$ and $\nabla_{q}$ in $\mathbf{Cat}$
\[
\textnormal{inc}_{X,Y}\colon N\left(\Sigma,\nabla\right)\left(X,Y\right)\to A\left(\Sigma,\nabla\right)\left(X,Y\right)
\]
has a right adjoint $\textnormal{norm}_{X,Y}$ called normalization.
The Beck data gives each counit component $\epsilon_{F}\colon\textnormal{inc}\;\textnormal{norm}\;F\Rightarrow F$
of this adjoint equivalence. In terms of universal arrows, this implies
that for any natural transformation $\phi$ (respecting strength maps
defined later) we have a unique coherent $\theta$ as on the left
below 
\[
\xymatrix@=1em{\Sigma_{f'}\nabla_{p'}\ar@{..>}[dd]_{\textnormal{inc}\;\theta}\ar[rrdd]^{\phi} &  &  &  &  & T\ar@/^{1pc}/[rdrr]^{p'}\ar@/_{1pc}/[rdd]_{f'}\ar@{..>}[rd]^{\theta}\\
 &  &  &  &  &  & A\otimes_{X}B\ar[d]_{f'_{s}}\myar{p'_{s}}{rr} &  & B\ar[d]^{f}\\
\Sigma_{f'_{s}}\nabla_{p'_{s}}\ar[rr]_{\mathfrak{b}} &  & \nabla_{p}\Sigma_{f} &  &  &  & A\ar[rr]_{p} &  & X
}
\]
for any chosen subpullback square (and outside square) as on the right
above.
\end{rem}

\subsection{Indexed monoidal structure}

We can now use this Beck data to show that base change respects the
canonical monoidal structure. Note that we have chosen the ordering
of our monoidal structure $u\otimes_{B}v:=\Sigma_{v}\nabla_{v}\left(u\right)$
such that we will have monoidal right closure in later sections.
\begin{prop}
Each slice category $\mathcal{E}/B$ is a subcartesian monoidal category
with tensor $u\otimes_{B}v:=\Sigma_{v}\nabla_{v}\left(u\right)$.
Moreover, the base change respects this monoidal structure thus giving
an indexed monoidal category $\nabla_{\left(-\right)}\colon\mathcal{E}^{\textnormal{op}}\to\mathbf{MonCat}$.
\end{prop}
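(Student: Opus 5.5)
I will verify the reduced axioms of a symmetric subcartesian monoidal category from Section \ref{subcartesian} for each slice $\mathcal{E}/B$. By the earlier Proposition and Corollaries, the remaining coherence (naturality of unitors, associators and symmetries, triangle and pentagon) then follows automatically. For $u\colon U\to B$ and $v\colon V\to B$, the object $u\otimes_B v=\Sigma_v\nabla_v(u)$ is the diagonal $U\otimes_B V\to B$ of the chosen subpullback.

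\textbf{Tensor and projections.} The projections $\pi_1,\pi_2$ are the two legs of the square, which are maps in $\mathcal{E}/B$ because the square commutes. The subproduct property is exactly the subpullback property, since a cone over $u,v$ in $\mathcal{E}/B$ is a commuting square over the cospan. Functoriality of $\otimes_B$ in both variables is obtained as in Lemma \ref{affps}, using the symmetry for the second variable. I factor $(a,b)\colon(u,v)\to(u',v')$ as $(a,1)$ followed by $(1,b)$. The first factor is $\nabla_v(a)$. The second is $\gamma\nabla_u(b)\gamma$, or equivalently is produced from the vertical composite via Lemma \ref{spb2}. Naturality of the projections holds by construction. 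The bifunctor law for $(a,b)$ needs the two orders of factoring to agree; this follows because both candidate maps are fillers into a subpullback compatible with the projections, so they coincide by uniqueness.

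\textbf{Unit, associator and symmetry.} The unit is $1_B$. By the nullary axiom, the subpullback of $u$ along $1_B$ has the identity as a leg, so $\theta,\phi$ can be taken to be identities (up to symmetry), and \eqref{unitproj} holds. The symmetry $\gamma$ is given by the symmetry of Definition \ref{locsemidef}.

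For the associator I use the binary composite axiom. $(u\otimes v)\otimes w$ is the subpullback of $u\otimes v$ along $w$. By the subpullback lemma (Lemma \ref{subpasting}), this is the outer rectangle formed by pasting the subpullback of $u$ along $\nabla_v$'s leg $U\otimes_BV\to V$ with the subpullback of $U\otimes_B V\to V$ along $V\otimes_B W\to V$. Comparing both $(u\otimes v)\otimes w$ and $u\otimes(v\otimes w)$ to a common $3$-fold composite of chosen squares, Lemma \ref{spb1} and Lemma \ref{spb2} produce a unique isomorphism $\alpha$ compatible with all three projections to $U,V,W$. The two triangles of \eqref{assocproj} then hold by uniqueness of fillers into subpullbacks. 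I expect this to be the main obstacle, since it requires care with the orientation of legs and uses the symmetry to turn a vertical composite into a horizontal one.

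\textbf{Indexing.} For $f\colon A\to B$, I will show $\nabla_f$ is strong monoidal. It preserves the unit because the subpullback of $1_B$ along $f$ is $1_A$ by the nullary axiom. For the tensor, I paste the cube whose faces are chosen subpullbacks. Both $\nabla_f(u\otimes_B v)$ and $\nabla_f u\otimes_A\nabla_f v$ are subpullbacks of $U\to B$ along the composite $V\otimes_B A\to V\to B$: the first by horizontal pasting of two subpullbacks, the second via the Beck data of Proposition \ref{beck1} applied to a subpullback square, which is therefore invertible. They are then canonically isomorphic by Lemma \ref{spb2}. Compatibility with the projections, and hence with the monoidal coherence and the pseudofunctor constraints of Lemma \ref{affps}, follows from uniqueness of mediating maps, because every structure map is a filler into a subpullback. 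This gives $\nabla_{(-)}\colon\mathcal{E}^{\textnormal{op}}\to\mathbf{MonCat}$.
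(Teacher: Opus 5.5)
Your proposal is correct and follows the paper's proof. The slice-wise structure comes from the nullary and binary composition coherence of chosen subpullbacks; you spell out, via the reduced axioms, what the paper compresses into one line. Strong monoidality of $\nabla_f$ is the same chain $\nabla_f\Sigma_v\nabla_v(u)\cong\Sigma_{v'}\nabla_{f'}\nabla_v(u)\cong\Sigma_{v'}\nabla_{v'}\nabla_f(u)$, combining the Beck isomorphism of Proposition \ref{beck1} with pseudofunctoriality, where $v'=\nabla_f(v)\colon A\otimes_B V\to A$ and $f'\colon A\otimes_B V\to V$ are the legs of the chosen subpullback of $v$ along $f$.

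One small slip in attribution, which does not affect the argument. It is $\nabla_f(u\otimes_B v)$ that needs the Beck (vertical pasting) step to be identified with the subpullback of $u$ along $vf'=fv'$. By contrast, $\nabla_f u\otimes_A\nabla_f v$ is that subpullback by horizontal pasting alone, so your two justifications are swapped.
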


\begin{proof}
For each object $B$, we must give a subcartesian structure on $\mathcal{E}/B$.
The strict symmetry data is by assumption. The unit is the terminal
object in the slice category, namely the identity on $B$. All remaining
data is constructed from the coherence for composition of chosen subpullbacks.
By Lemma \ref{affps} we have an indexed category $\nabla_{\left(-\right)}$,
and so it remains to check each $\nabla_{p}\colon\mathcal{E}/B\to\mathcal{E}/E$
respects the binary constraints of the monoidal structure
\[
\begin{aligned}\nabla_{p}\left(f\otimes_{B}g\right) & =\nabla_{p}\Sigma_{g}\nabla_{g}\left(f\right)\\
 & \cong\Sigma_{g'}\nabla_{p'}\nabla_{g}\left(f\right)\\
 & \cong\Sigma_{g'}\nabla_{g'}\nabla_{p}\left(f\right)\\
 & =\left(\nabla_{p}\left(f\right)\right)\otimes_{E}g'\\
 & =\left(\nabla_{p}\left(f\right)\right)\otimes_{E}\left(\nabla_{p}\left(g\right)\right)
\end{aligned}
\]
That units are respected is trivial.
\end{proof}

\begin{rem}
When $\mathcal{E}$ has pullbacks each slice category $\mathcal{E}/X$
has two monoidal structures; namely the cartesian monoidal structure
$\left(\mathcal{E}/X,\times_{X},\mathbf{1}\right)$ and the subcartesian
monoidal structure $\left(\mathcal{E}/X,\otimes_{X},\mathbf{1}\right)$.
Since the latter monoidal structure is semicartesian we have canonical
oplax monoidal functors $\left(\mathcal{E}/X,\otimes_{X},\mathbf{1}\right)\to\left(\mathcal{E}/X,\times_{X},\mathbf{1}\right)$.
\end{rem}

The following shows that the slice categories must always at least
be semicartesian.
\begin{prop}
\label{alwayssemi} Suppose $\left(\mathcal{E}/X,\otimes_{X},I_{X}\right)$
is a monoidal category for all $X$, with natural isomorphisms $g\otimes_{X}f\cong\Sigma_{f}\nabla_{f}\left(g\right)$
for some special indexed category $\nabla_{\left(-\right)}\colon\mathcal{E}^{\textnormal{op}}\to\mathbf{Cat}$.
Then the monoidal structure on each $\mathcal{E}/X$ is semicartesian.
\end{prop}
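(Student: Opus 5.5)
The aim is to show that the unit $I_X$ of each slice $\mathcal{E}/X$ is isomorphic to the terminal object $\textnormal{id}_X$. Equivalently, I will build coherent projections out of each tensor. The plan has two steps: read off from $g\otimes_{X}f\cong\Sigma_{f}\nabla_{f}\left(g\right)$ that every tensor comes with a canonical map to its right factor, and then specialize to the unit.

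First, the projection onto the right factor. For any $g,f\in\mathcal{E}/X$ the object $\Sigma_{f}\nabla_{f}\left(g\right)$ is, by construction, a morphism of the form $f\cdot k$ with $k=\nabla_{f}(g)$. Hence $k$ itself is a morphism $\Sigma_{f}\nabla_{f}(g)\to f$ in $\mathcal{E}/X$; this is the counit of $\Sigma_{f}\dashv\Delta_{f}$-type composition at the terminal object of $\mathcal{E}/\textnormal{dom}f$. Composing with the given isomorphism yields $\rho_{g,f}\colon g\otimes_{X}f\to f$. This map is natural in $g$ and in $f$, by naturality of the given isomorphism and functoriality of $\Sigma$.

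Next, take $f=\textnormal{id}_X$. Here I will use that $\nabla$ is \emph{special}, which I read as normal, so that $\nabla_{\textnormal{id}}\cong\textnormal{id}$ and base change preserves terminal objects. Then $g\cong g\otimes_X\textnormal{id}_X$, which is the only part needed.

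The key step is to show that $I_X$ is terminal. Take $g=I_X$ and $f$ arbitrary. The left unitor gives $f\cong I_X\otimes_X f\cong\Sigma_f\nabla_f(I_X)$. Applying the right projection $\rho$ with roles chosen so that we land in $I_X$ requires a map $g\otimes f\to g$, which the formula does not give directly. Instead I use the factorization in the other variable. Setting $f=I_X$ yields $g\cong g\otimes_X I_X\cong\Sigma_{I_X}\nabla_{I_X}(g)$, and the map $\nabla_{I_X}(g)$ exhibits a morphism $g\to I_X$ in $\mathcal{E}/X$ for every $g$. So $I_X$ is weakly terminal. For uniqueness, given any $u\colon g\to I_X$, naturality of $\rho_{g,-}$ in the second variable compares $u$ with $\rho$. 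Combined with $\rho_{I_X,I_X}$ being invertible (it is identified with the unitor, as in the proof of the lemma that the unit of a subcartesian monoidal category is terminal), this forces $u=\rho_{g,I_X}\circ(\text{unitor})^{-1}$. Mirroring that earlier lemma, I would form the naturality square for $I_X\otimes u$, with $\rho_{I_X,I_X}$ and $\rho_{I_X,g}$ invertible, so that $u$ is determined.

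I expect the main obstacle to be this uniqueness argument. It needs $\rho_{I_X,g}\colon I_X\otimes g\to g$ to be invertible, which should follow from the left unitor once I check that $\rho_{I_X,-}$ agrees with the unitor. That check in turn requires compatibility between the monoidal unit constraints and the pseudofunctor data of $\nabla$, which I would extract from the word ``special''. Once $I_X\cong\textnormal{id}_X$ is established, semicartesianness follows by definition.
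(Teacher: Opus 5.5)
Your weak-terminality step is fine, but the uniqueness half of ``$I_X$ is terminal'' does not go through, and it is the step you rely on.

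\begin{itemize}
\item Naturality of $\rho_{g,f}$ in $f$ is not available. The hypotheses give no functorial structure on $f\mapsto\Sigma_f\nabla_f(g)$: a bare special indexed category has no maps $\Sigma_u\nabla_u(h)\to h$, and producing them from the tensor would need left projections. So the given isomorphism can only be natural in $g$, and nothing relates $g\otimes_X u$ to your $\rho$.
\item Invertibility of $\rho_{I_X,g}$ would require the map $\nabla_g(I_X)$ itself to be invertible. The left unitor only supplies some abstract isomorphism $\Sigma_g\nabla_g(I_X)\cong g$ in $\mathcal{E}/X$, not that this particular map is one.
\item Even granting both, mirroring the earlier lemma only yields $u=\rho_{I_X,I_X}\circ(I_X\otimes_X u)\circ\rho_{I_X,g}^{-1}$, so you still have to determine $I_X\otimes_X u$ independently of $u$. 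That lemma does this with the \emph{left} projection $\pi_1^{I,A}$, which you correctly note the formula does not give you. Having such projections is essentially semicartesianness, so the argument would be circular.
\end{itemize}

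The detour is unnecessary, because you already have the key fact. You observed that $\nabla_{\textnormal{id}_X}\cong\textnormal{id}$ gives $g\cong g\otimes_X\textnormal{id}_X$ for every $g$. This is just the unit constraint of the pseudofunctor; in the paper ``special'' only means $X\mapsto\mathcal{E}/X$, not normality. Now take $g=I_X$. Then $I_X\cong I_X\otimes_X\textnormal{id}_X\cong\textnormal{id}_X$ by the left unitor, so $I_X$ is isomorphic to the terminal object of $\mathcal{E}/X$. This is exactly the paper's proof, and it needs no projections, weak terminality, or uniqueness argument.
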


\begin{proof}
Let the monoidal unit of $\mathcal{E}/X$ be $I_{X}$. Then 
\[
\nabla_{\textnormal{id}_{X}}\left(I_{X}\right)=\Sigma_{\textnormal{id}_{X}}\nabla_{\textnormal{id}_{X}}\left(I_{X}\right)\cong I_{X}\otimes_{X}\textnormal{id}_{X}\cong\textnormal{id}_{X}
\]
since $I_{X}$ is a unit. Since the pseudofunctor $\nabla_{\left(-\right)}$
must respect units $\nabla_{\textnormal{id}_{X}}\left(I_{X}\right)\cong\textnormal{id}_{\mathcal{E}/X}\left(I_{X}\right)=I_{X}$.
Thus the unit $I_{X}$ is isomorphic to the terminal object.
\end{proof}

This allows for an alternative definition of locally subcartesian
categories, involving a special type of indexed monoidal category.
We now define what ``special'' means for indexed categories and
monoidal categories.
\begin{defn}
\label{spec} An indexed category $\nabla_{\left(-\right)}\colon\mathcal{E}^{\textnormal{op}}\to\mathbf{Cat}$
is \emph{special} if each object $X\in\mathcal{E}$ is assigned to
the slice $\mathcal{E}/X$. An indexed monoidal category (with respect
to strong monoidal functors) $\nabla_{\left(-\right)}\colon\mathcal{E}^{\textnormal{op}}\to\mathbf{MonCat}$
is \emph{special} if the underlying indexed category is special and
the tensor on each $\mathcal{E}/X$ is given by $f\otimes_{X}g=\Sigma_{g}\nabla_{g}\left(f\right)$\footnote{Note the reindexing functors are necessarily strong monoidal here.}.
\end{defn}

The following simple characterization now follows.
\begin{cor}
\label{specmon} A locally semicartesian category $\mathcal{E}$ is
equivalent to a special indexed monoidal category $\nabla_{\left(-\right)}\colon\mathcal{E}^{\textnormal{op}}\to\mathbf{MonCat}$.
\end{cor}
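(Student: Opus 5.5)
We construct the two passages and check that they are mutually inverse up to equivalence. In the forward direction, let $\mathcal{E}$ be locally semicartesian (Definition \ref{locsemidefnotuni}). We build the affine base change pseudofunctor $\nabla_{(-)}\colon\mathcal{E}^{\textnormal{op}}\to\mathbf{Cat}$ as in Lemma \ref{affps}, except that uniqueness of fillers is no longer available.

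\begin{itemize}
\item \emph{Action on morphisms.} Given $p\colon h\to h'$ in $\mathcal{E}/Y$, the vertical binary coherence isomorphism identifies the chosen square of $h=h'p$ along $f$ with the vertical composite of the chosen squares of $h'$ and of $p$. This gives $\nabla_{f}(p)$ as the top leg of the lower square precomposed with that isomorphism.
\item \emph{Functoriality.} Identities go to identities because the nullary vertical coherence isomorphisms are identities. Composites are respected by the vertical associativity axiom for triples of squares.
\item \emph{Pseudofunctor constraints.} These are the horizontal binary coherence isomorphisms. Naturality in $h$ is exactly the interchange axiom for $2\times2$ grids, and the pseudofunctor associativity and unit axioms are the horizontal triple and nullary axioms.
\end{itemize}

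With $\nabla$ in hand, the tensor $u\otimes_{B}v:=\Sigma_{v}\nabla_{v}(u)$ is a monoidal structure on $\mathcal{E}/B$ with unit $\textnormal{id}_{B}$:

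\begin{itemize}
\item The associator is the composite coherence isomorphism, built from the horizontal and vertical coherence isomorphisms.
\item The pentagon follows from the triple axiom together with interchange.
\item The unit constraints are the nullary coherences.
\item Each $\nabla_{p}$ is strong monoidal via the Beck-type isomorphisms $\nabla_{p}\Sigma_{g}\cong\Sigma_{g'}\nabla_{p'}$ obtained from horizontal composition of chosen squares, as in the displayed computation above.
\end{itemize}

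The result is a special indexed monoidal category in the sense of Definition \ref{spec}.

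In the reverse direction, start from a special indexed monoidal category. For a cospan $f\colon A\to X$, $g\colon B\to X$, take the chosen semipullback to be the object $\nabla_{f}(g)$ of $\mathcal{E}/A$. Its vertical leg is $\nabla_{f}(g)$, and its horizontal leg is the counit candidate of Remark \ref{counitmap}, obtained from the tensor projection. Since $u\otimes v=\Sigma_{v}\nabla_{v}(u)$ and the unit is terminal by Proposition \ref{alwayssemi}, the tensor has projections, and these supply the required map.

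\begin{itemize}
\item The nullary axioms come from strict normality, which we may assume by replacing the pseudofunctor with a strictly normal equivalent one.
\item The horizontal binary isomorphisms come from the pseudofunctor constraints $\nabla_{p}\nabla_{q}\cong\nabla_{qp}$.
\item The vertical binary isomorphisms come from the functor action together with the monoidal associator.
\item The triple and $2\times2$ axioms translate back into pseudofunctor coherence and naturality of the constraints.
\end{itemize}

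Finally we check that the two constructions are mutually inverse up to the evident equivalences. Starting from $\mathcal{E}$, the round trip returns the same chosen squares and coherence isomorphisms. Starting from an indexed category, the round trip returns an isomorphic pseudofunctor, because the action on morphisms is determined by the vertical coherence data.

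The main obstacle is the interchange/naturality step. Without uniqueness of fillers, every equation the locally subcartesian proofs obtained for free must be traced back to one of the explicit axioms of Definition \ref{locsemidefnotuni}. In particular, naturality of the pseudofunctor constraints must be shown to be precisely the $2\times2$ interchange axiom, and we would verify this first by a direct diagram chase on a grid of chosen squares.
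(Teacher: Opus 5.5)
Your overall architecture is the paper's. The paper takes the forward direction as already established by the earlier lemmas. For the converse it gets commuting squares from Prop.~\ref{alwayssemi} and horizontal coherence from pseudofunctoriality, and it asserts that strong monoidality of reindexing recovers the vertical coherence.

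Your version of that last step (``the vertical binary isomorphisms come from the functor action together with the monoidal associator'') is a genuine gap. Definition~\ref{locsemidefnotuni} needs, for every $f\colon A\to X$, $g_1\colon B\to X$ and \emph{arbitrary} $g_2\colon C\to B$, an isomorphism $\nabla_f(g_1g_2)\cong\nabla_f(g_1)\circ\nabla_{f'}(g_2)$ compatible with both legs. Here $f'$ is the top leg of the chosen square of $(f,g_1)$. Equivalently, you need a Beck isomorphism $\nabla_f\Sigma_{g_1}\cong\Sigma_{\nabla_f(g_1)}\nabla_{f'}$ on all of $\mathcal{E}/B$. (In your forward direction this isomorphism comes from vertical, not horizontal, composition of squares.)

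Unwinding strong monoidality $\nabla_f(u\otimes_X g_1)\cong\nabla_f(u)\otimes_A\nabla_f(g_1)$, or the associator, against the pseudofunctor constraints yields this isomorphism only for $g_2$ of the form $\nabla_{g_1}(u)$. The functor action gives $\nabla_f(g_2)$, and hence a commuting square over $(f',g_2)$ with apex $\textnormal{dom}\,\nabla_f(g_1g_2)$. With no universal property, nothing compares that square with the chosen one.

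In fact no argument can produce these isomorphisms from a non-symmetric special indexed monoidal category. Take $\mathcal{E}$ to be the chain $0<c<b<1$. On $\mathcal{E}/b$ and $\mathcal{E}/c$ use meets. On $\mathcal{E}/1\cong\{0,c,b,1\}$ use the operation with unit $1$, absorbing $0$, and $b\cdot b=b$, $b\cdot c=c$, $c\cdot b=c\cdot c=0$. Put $\nabla_{A\le X}(u)=u\cdot_X A$. This operation is monotone and associative. The only non-trivial pseudofunctoriality condition is $u\cdot_1 c=(u\cdot_1 b)\cdot_b c$, and it holds for all $u$. Since coherence is vacuous in a poset, this is a special indexed monoidal category, with strong monoidality reducing to associativity.

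Now take $f=g_1\colon b\to 1$ and $g_2\colon c\to b$. The chosen square of $(f,g_1)$ has apex $b\cdot_1 b=b$ and top leg $\textnormal{id}_b$, so the vertical composite has apex $c$. The chosen square of $(f,g_1g_2)$ has apex $c\cdot_1 b=0$, and $0\not\cong c$. So the real obstacle is the existence of the vertical isomorphisms, not interchange.

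The paper's sketch makes the same move, so this is a weakness you inherit rather than introduce. To close it you have two options. One is to work symmetrically, as in the paper's remark on the symmetric case: transpose squares along the symmetry $u\otimes_X v\cong v\otimes_X u$, which is automatically compatible with the semicartesian projections, so that vertical coherence reduces to pseudofunctoriality. The other is to add the general Beck isomorphism to the definition of special.
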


\begin{proof}
We need only explain the converse direction. Given such a special
indexed monoidal category, which is semicartesian on slices by Prop.
\ref{alwayssemi} and thus given by commuting squares, the pseudofunctoriality
of $\nabla_{\left(-\right)}$ gives the axioms for horizontal square
composition. That reindexing is strong monoidal allows one to recover
the vertical coherence data, by the reverse of the semicartesian version
of the construction of Beck data of Prop. \ref{beck1} (and this reindexing
is itself functorial). The interaction of reindexing and pseudofunctoriality
gives the interchange property. 
\end{proof}

\begin{rem}
Given a locally subcartesian category with pullbacks, each $\nabla_{f}$
is canonically a subfunctor of $\Delta_{f}$ meaning the natural transformation
has monic components. This family of natural transformations for each
$f$ comprises an icon \cite{icons} exhibiting $\nabla_{\left(-\right)}$
as a subpseudofunctor of $\Delta_{\left(-\right)}\colon\mathcal{E}^{\textnormal{op}}\to\mathbf{Cat}$.

One curious point here is that the act of pullbacks preserving certain
maps is itself a ``subpullback condition'' under pseudo-natural
transformations rather than icons. Hence one may be motivated to consider
subobjects of $\Delta_{\left(-\right)}$ with respect to oplax natural
transformations as a more general setting.
\end{rem}

\subsection{Spans with subpullbacks}

At first the notion of locally subcartesian categories (or more general
semicartesian variants) may seem artificial. However, it turns out
that this definition is canonical and provably so. 

Monoidal categories (equivalent to one object bicategories) came from
relaxing the assumption that the composition (tensor) structure is
given by a universal property, instead allowing any operation which
is suitably coherent. Following the same idea, instead of restricting
to bicategory structures on hom-categories of spans given by a universal
property (pullback) we allow all possible bicategory structures. We
then find that locally semicartesian categories correspond to bicategory
structures on span. The uniqueness or ``sub'' property is then added
to ensure the category has a well behaved logic of subobjects.

To make this precise we restrict to bicategory structures on span
which satisfy the expected decomposition property.
\begin{defn}
A bicategory structure (meaning a coherent family of composition functors)
on a family of hom-categories of spans $\mathbf{Span}\left(\mathcal{E}\right)\left(X,Y\right)$
is \emph{functorially decomposable} if for all spans we have $\left(s,t\right)\cong\left(s,1\right);\left(1,t\right)$
and moreover the embeddings from $\mathcal{E}$ and $\mathcal{E}^{\textnormal{op}}$
are pseudofunctorial. Without loss of generality, we will consider
the case where these properties hold \emph{strictly}. It is straightforward
but tedious to give the non-strict version.
\end{defn}

\begin{rem}
This condition is actually somewhat restrictive, as some examples
such as relations are only oplax decomposable with maps $\left(s,t\right)\Rightarrow\left(s,1\right);\left(1,t\right)$,
in this case yielding Street's bicategory of polynomial relations
\cite{polyspans}. The issue here is that $\left(s,1\right);\left(1,t\right)$
might not be $\left(s,t\right)$ because there is no guarantee the
resulting span is jointly monic, hence requiring a comparison constructed
from an image factorization. Though such cases still have canonical
comparison maps, it is not clear if they can be simply understood
in our framework.
\end{rem}

\begin{prop}
All strictly functorially decomposable bicategory structures on the
family of hom-categories of spans $\mathbf{Span}\left(\mathcal{E}\right)\left(X,Y\right)$
over a category $\mathcal{E}$ have composition defined by placing
atop a commuting square as below
\[
\xymatrix@=1em{ &  & P\ar[ld]_{g'}\ar[rd]^{f'}\\
 & M\ar[ld]_{p}\ar[rd]^{f} &  & N\ar[ld]_{g}\ar[rd]^{q}\\
X &  & Y &  & Z
}
\]
and composing the left and right legs. Moreover these squares respect
identities and binary composition up to coherent isomorphism. 
\end{prop}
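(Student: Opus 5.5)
Using strict functorial decomposability, I reduce an arbitrary composite of spans to a composite of an ``opspan-shaped'' middle piece. For spans $(p,f)\colon X\to Y$ and $(g,q)\colon Y\to Z$, decomposition and associativity of the bicategory composition (strict, by assumption) give
\[
(p,f);(g,q)=(p,1);(1,f);(g,1);(1,q).
\]
The middle composite $(1,f);(g,1)\colon M\to N$ is a span from $M$ to $N$, say $(g',f')$ with apex $P$. Since $(1,f)$ and $(g,1)$ are 1-cells $M\to Y$ and $Y\to N$, this span satisfies no equation a priori. I would get the commuting square $fg'=gf'$ from pseudofunctoriality of the embeddings, as follows.

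The embedding $\mathcal{E}\to\mathbf{Span}$, $f\mapsto(1,f)$, is strictly functorial, so $(1,f)$ is a 1-cell on which postcomposition acts. Consider the 2-cells into $(1,1_Y)$. By decomposability, the whole composite with outer legs $(p,1)$ and $(1,q)$ is determined by the middle span. To see that $fg'=gf'$, I would use the counit-type 2-cells available in span: there is a 2-cell $(1,f);(f,1)\Rightarrow(1_Y,1_Y)$ only when legs agree. A cleaner route is to apply functoriality of the composition on 2-cells, i.e.\ maps of spans. The map $f\colon M\to Y$ is a 2-cell $(1_M,f)\Rightarrow(f,1_Y)$ is not of the right shape, so instead I whisker as follows. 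The morphism of spans $f\colon(1_M,f)\to(f,f)$, viewed as a 2-cell $X\to Y$ in the hom-category $\mathbf{Span}(M,Y)$ ... this fails, since the leg into $M$ must be respected.

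Instead I use the identity-normalisation. Composing $(1,f)$ with the identity span $(1_Y,1_Y)=(1,1_Y)$ gives strictly $(1,f)$. Since $(g,1)=(g,1);(1,1)$ and composition is a functor on hom-categories, I would exploit the fact that $(g,1)$ receives the 2-cell $g\colon(1_N,g)\dots$ Honestly, the intended argument is simpler. Every composite span $P\to M$, $P\to N$ over the base, obtained via the bicategory's coherence with the strict pseudofunctorial embedding $\mathcal{E}^{\mathrm{op}}$, satisfies the following: the composite $(1,f);(g,1);(1,g)=(1,f);(1,1)$-type reductions force the two composites $P\to Y$ to agree. I expect this commutativity step to be the main obstacle, and I would try first to derive it from the identity $(g,1);(1,g)$ being related to the identity on $Y$ by the functorial embeddings.

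Given the square, the composite is $(p,1);(g',f');(1,q)=(pg',qf')$, again by strict decomposability and embedding functoriality. That these squares respect identities follows from the unit constraints: taking $g=1$ gives $(1,f);(1,1)=(1,f)$, so $g'=1$. Binary composition up to coherent isomorphism follows by transporting the associator along the decomposition for stacked squares, and the coherence axioms follow from the pentagon and triangle.
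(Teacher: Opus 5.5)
Your reduction to the middle composite $(1,f);(g,1)=(g',f')$ and your computation of the outer legs $(pg',qf')$ are fine. However, the one non-formal claim of the proposition, namely $fg'=gf'$, is never proved. You flag it as the main obstacle and leave only a vague suggestion, so there is a genuine gap. Two of your attempts also go astray:
\begin{itemize}
\item $(1,f);(f,1)$ is a span $M\to M$, so it is not even parallel to $(1_Y,1_Y)$.
\item There is no equation $(1,f);(g,1);(1,g)=(1,f);(1,1)$. Between these there is only a non-invertible 2-cell, and the content of the argument lies entirely in analysing that 2-cell.
\end{itemize}

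The missing idea is as follows.
\begin{itemize}
\item For every $f\colon M\to Y$, with no condition on legs, the map $f$ itself is a morphism of spans $\epsilon_f\colon(f,f)\Rightarrow(1_Y,1_Y)$ in $\mathbf{Span}(\mathcal{E})(Y,Y)$. The composite in the right order is $(f,1);(1,f)=(f,f)$.
\item Whisker $\epsilon_f$ with $(g,1)$. By decomposability and strict functoriality of the embedding of $\mathcal{E}^{\mathrm{op}}$, the domain is $(f,f);(g,1)=(f,1);(1,f);(g,1)=(f,1);(g',1);(1,f')=(fg',f')$. The codomain is $(1_Y,1_Y);(g,1)=(g,1)$.
\item A morphism of spans $\phi\colon(fg',f')\Rightarrow(g,1)$ must satisfy $\phi=f'$, because of the identity leg, and $g\phi=fg'$. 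Hence $gf'=fg'$.
\end{itemize}
This is exactly the paper's proof.

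Your last suggestion can be made to work in mirror form. Whisker $\epsilon_g\colon(g,g)=(g,1);(1,g)\Rightarrow(1_Y,1_Y)$ on the left by $(1,f)$. This gives a morphism of spans $(g',gf')\Rightarrow(1,f)$ whose component is forced to be $g'$, which yields $fg'=gf'$.

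The paper then gets the identity and binary-composition properties from whiskering preserving identity and composite 2-cells. Your unit argument ($g=1$ forces $g'=1$) and associativity sketch are acceptable substitutes for that part.
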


\begin{proof}
Suppose we are given such a bicategory structure. We define $\left(f',g'\right)$
as the composite $\left(1,f\right);\left(g,1\right)$ and then whisker
the 2-cell $\epsilon_{f}$ as below
\[
\xymatrix@=1em{ & M\ar[ld]_{f}\ar[rd]^{f}\ar[dd]_{f} &  & N\ar[ld]_{g}\ar[rd]^{1} &  &  &  &  &  & P\ar[ld]_{fg'}\ar[rd]^{f'}\ar[dd]_{f'}\\
Y &  & Y &  & N &  & = &  & Y &  & N\\
 & Y\ar[ul]^{1}\ar[ru]_{1} &  &  &  &  &  &  &  & N\ar[ul]^{g}\ar[ru]_{1}
}
\]
as the resulting 2-cell must be $f'$ we have shown the square commutes.
If $f$ is the identity then so is $f'$ since composition respects
identity 2-cells. Similarly, composition respects binary composition
$f=f_{2}f_{1}$. The same holds for $g$ by a similar argument.
\end{proof}

We now show that coherent subpullbacks (or semipullbacks) are enough
to form a bicategory of spans, with the non-trivial part being horizontal
composition of 2-cells. The idea is to factor the top middle morphisms
using the 2-cell data, so that after constructing all necessary squares
one square will appear twice thus leading to a simplification. Again,
we establish the existence of necessary morphisms via coherence rather
than a universal property.
\begin{prop}
Given a locally subcartesian (or semicartesian) category $\left(\mathcal{E},\otimes\right)$
there is a bicategory of spans over $\mathcal{E}$ denoted $\mathbf{Span}\left(\mathcal{E},\otimes\right)$
which has objects, 1-cells and 2-cells as usual. Composition of 1-cells
is given by subpullback (or semipullback), and horizontal composition
of two general 2-cells $f$ and $g$ is given by first constructing
the squares in a diagram as below
\[
\xymatrix@=0.7em{ &  &  &  & P\ar[rd]^{f''}\ar[ld]_{g''}\\
 &  &  & H\ar[rd]^{f'}\ar[ld]_{s_{2}''} &  & K\ar[ld]_{g'}\ar[rd]^{t_{2}''}\\
 &  & A\ar[ldld]_{s_{1}}\ar[rd]_{f}\ar[ddd]_{f} &  & Q\ar[rd]^{t_{2}'}\ar[ld]_{u_{2}'} &  & M\ar[ld]^{g}\ar[rdrd]^{v_{1}}\ar[ddd]^{g}\\
 &  &  & B\ar[rd]_{t_{2}} &  & N\ar[ld]^{u_{2}}\\
X &  &  &  & Y &  &  &  & Z\\
 &  & B\ar[llu]^{s_{2}}\ar[rru]_{t_{2}} &  &  &  & N\ar[llu]^{u_{2}}\ar[rru]_{v_{2}}\\
 &  &  &  & Q\ar[lul]^{u_{2}'}\ar[urr]_{t_{2}'}
}
\]
and then taking $f'\cdot g''=g'\cdot f''\colon P\Rightarrow Q$ as
the resulting 2-cell\footnote{Here we have suppressed the unique isomorphism between the $2\times2$
grid of squares and its outer square.}.
\end{prop}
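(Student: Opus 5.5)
We build the bicategory by transporting the usual span-bicategory proof, with the universal property of pullbacks replaced by the coherence isomorphisms of Definition \ref{locsemidef} (or Definition \ref{locsemidefnotuni}) and by Lemmas \ref{subpasting}, \ref{spb1} and \ref{spb2}.

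First we fix the 1-cell data. For spans $(s_{1},t_{1})\colon X\to Y$ and $(u_{1},v_{1})\colon Y\to Z$ we define the composite as the chosen subpullback $A\otimes_{Y}M$ of $t_{1}$ and $u_{1}$, with the outer legs composed. The nullary axiom gives strict identity 1-cells. The associator comes from the horizontal and vertical binary coherence isomorphisms: both bracketings of a triple composite are, by the binary axiom, isomorphic to the subpullback of the outer $2\times2$ grid. In the semicartesian case we use the interchange axiom of Definition \ref{locsemidefnotuni} to identify the two ways of forming that grid. The pentagon then follows from the associativity axiom on triples of squares, and from uniqueness of fillers in the sub case. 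The triangle identities are immediate, since the unit squares are strict.

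Next we treat horizontal composition of 2-cells $f\colon A\Rightarrow B$ and $g\colon M\Rightarrow N$.
\begin{enumerate}
\item Since $t_{2}f=t_{1}$, we factor the subpullback of $(t_{1},u_{2})$ as the subpullback $H$ of $f$ along $u_{2}'$ stacked on the square $Q=B\otimes_{Y}N$. The binary axiom identifies this stack with the chosen square, which gives the map $f'\colon H\to Q$.
\item Symmetrically, we factor the subpullback of $(t_{2},u_{1})$ through $g$, obtaining $K$ and $g'\colon K\to Q$.
\item We form the subpullback $P$ of $f'$ and $g'$ over $Q$.
\item The whole $2\times2$ grid (bottom-left square $Q$, the squares $H$ and $K$, and $P$ on top) is a composite of chosen squares over the cospan $(t_{1},u_{1})$. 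By Lemma \ref{spb2} and the interchange coherence, it is uniquely isomorphic to $A\otimes_{Y}M$.
\end{enumerate}
This yields the 2-cell $f'g''=g'f''$, and commutativity with the outer legs is clear from the diagram.

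It remains to check the 2-cell axioms.
\begin{enumerate}
\item Identities go to identities by the nullary axiom: with $f=\mathrm{id}$ one has $f'=\mathrm{id}$ and $H=Q$.
\item For the interchange law, that $(f_{2}f_{1})\circ(g_{2}g_{1})=(f_{2}\circ g_{2})(f_{1}\circ g_{1})$, we use the binary axiom for $f_{2}f_{1}$ and $g_{2}g_{1}$. The squares attached to $f_{2}f_{1}$ decompose as stacks of the squares attached to $f_{2}$ and $f_{1}$, and the resulting $3\times3$-type grids agree up to the canonical isomorphisms.
\item Naturality of the associator and unitors in the 2-cells is checked the same way.
\end{enumerate}
In the sub case each of these equalities follows because both sides are fillers into a subpullback that commute with its projections, so they coincide by uniqueness.

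We expect the main obstacle to be the semicartesian case. There no uniqueness is available, so every equality must be extracted from the coherence axioms themselves. For the interchange law of 2-cells in particular, one has to verify explicitly that the two ways of combining the coherence isomorphisms between the grids agree. We would handle this by first proving a general pasting statement: any two composites of coherence isomorphisms between a grid of chosen squares and its outer square coincide. This is a Mac Lane-style coherence argument from the triple and interchange axioms, and we would then reduce every axiom to it. In the sub case this statement is free, which is why the locally subcartesian proof is essentially routine.
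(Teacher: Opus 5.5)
Your proposal is correct and follows essentially the same route as the paper: the top of the $2\times 2$ grid is identified with $A\otimes_Y M$ via the coherence isomorphism, and the resulting map is the horizontal composite. Coherence is then automatic from uniqueness of fillers in the subcartesian case, and comes from the horizontal, vertical and interchange axioms in the semicartesian case. You spell out more of the bookkeeping (associators, unitors, the interchange law for 2-cells) than the paper does; the paper only checks compatibility with the outer legs and cites these coherence laws.
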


\begin{proof}
This horizontal composite of $f\colon\left(s_{1},t_{1}\right)\Rightarrow\left(s_{2},t_{2}\right)$
and $g\colon\left(u_{1},v_{1}\right)\Rightarrow\left(u_{2},v_{2}\right)$
is a 2-cell as it gives a commuting diagram as below
\[
\xymatrix@=0.7em{ & A\ar[ld]_{s_{1}}\ar[dd]_{f} & H\ar[l]_{s_{2}''}\ar[rdd]_{f'} & P\ar[r]^{f''}\ar[l]_{g''} & K\ar[r]^{t_{2}''}\ar[ldd]^{g'} & M\ar[rd]^{v_{1}}\ar[dd]^{g}\\
X &  &  &  &  &  & Z\\
 & B\ar[lu]^{s_{2}} &  & Q\ar[ll]^{u_{2}'}\ar[rr]_{t_{2}'} &  & N\ar[ru]_{v_{2}}
}
\]
The horizontal and vertical composition laws in addition to the interchange
property give the necessary coherence. In the subpullback case coherence
is automatic from uniqueness of mediating morphisms just as in the
standard setting.
\end{proof}

These properties of bicategory structures on span may be summarized
simply as follows.
\begin{cor}
Strictly functorially decomposable bicategory structures on hom-categories
of spans over $\mathcal{E}$ are in equivalence with locally semicartesian
category structures on $\mathcal{E}$.
\end{cor}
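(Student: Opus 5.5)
The plan is to build the equivalence from the two preceding propositions, one in each direction, and then check that the two constructions are mutually inverse up to coherent isomorphism.

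\emph{From a locally semicartesian structure to spans.} Given a locally semicartesian category $\left(\mathcal{E},\otimes\right)$, the previous proposition produces the bicategory $\mathbf{Span}\left(\mathcal{E},\otimes\right)$, where 1-cells compose by the chosen semipullback. I would first show this structure is strictly functorially decomposable. Composing $\left(s,1\right);\left(1,t\right)$ places the chosen semipullback of $1$ and $1$ atop the two spans. By the nullary axiom (identity legs are preserved) this square is the identity square, so the composite is exactly $\left(s,t\right)$. Pseudofunctoriality of the embeddings $\mathcal{E}\to\mathbf{Span}$ and $\mathcal{E}^{\textnormal{op}}\to\mathbf{Span}$ follows the same way: the squares involved have an identity leg, and they are strictly the identity squares.

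\emph{From spans to a locally semicartesian structure.} Given a strictly functorially decomposable bicategory structure, the earlier proposition gives, for each cospan, a commuting square $\left(f',g'\right)$ defined by $\left(1,f\right);\left(g,1\right)$. This assignment respects identities, and respects binary composition up to coherent isomorphism. I would take these squares as the chosen semipullbacks. The horizontal and vertical binary coherence isomorphisms come from the associator of the bicategory, applied to composites of the form $\left(1,f_{1}\right);\left(1,f_{2}\right);\left(g,1\right)$ together with decomposability. The unit conditions come from the unitors, which are identities in the strict setting. The two remaining axioms are the agreement of the two triple-composite constructions and the $2\times2$ interchange axiom. These should follow from the pentagon axiom and from the interchange law combined with naturality of the associator, respectively.

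\emph{Mutual inverse.} Going from semipullbacks to spans and back returns the same chosen squares, since $\left(1,f\right);\left(g,1\right)$ is by definition the chosen semipullback. Going from spans to semipullbacks and back, decomposability identifies the composite of $\left(p,f\right)$ and $\left(g,q\right)$ with
\[
\left(p,1\right);\left(1,f\right);\left(g,1\right);\left(1,q\right).
\]
Together with the earlier proposition describing all such compositions, this shows the original composition functor agrees with the reconstructed one up to coherent isomorphism. The same argument applies on 2-cells, because whiskering is determined by the action on the squares.

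The main obstacle is matching the bicategorical coherence data exactly with the coherence axioms of Definition \ref{locsemidefnotuni}. This matters most for the $2\times2$ interchange axiom, since in the semicartesian setting there is no uniqueness of mediating maps to fall back on. Here I would write both sides as associator composites in the bicategory and reduce their equality to Mac Lane coherence for bicategories. The triple condition then follows from the pentagon, and the grid condition from naturality of the associator in each variable.
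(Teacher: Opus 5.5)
Your two constructions are the ones the paper intends. Its corollary is just the proposition that a strictly decomposable structure composes 1-cells by placing the squares $(1,f);(g,1)$ on top, read together with the construction of $\mathbf{Span}(\mathcal{E},\otimes)$. The 1-cell and coherence parts of your argument are sound, although the triple and grid axioms are better derived from explicit pentagon and naturality instances than from the coherence theorem, since identifications such as $(1,q);(g,1)=(g',1);(1,q')$ are not formal. The step that fails is the last one. Horizontal composition of 2-cells is \emph{not} determined by the squares, and the associator-built comparison you use for the round trip spans $\to$ semipullbacks $\to$ spans need not be natural.

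Here is a counterexample. Take $\mathcal{E}=\mathbf{FinSet}$ with normalized pullbacks, and transport the standard span bicategory along automorphisms $\Phi_{S,T}$ of composite spans. That is, put $\omega*'\beta:=\Phi_{S',T'}\circ(\omega*\beta)\circ\Phi_{S,T}^{-1}$ and conjugate the associators accordingly. Take $\Phi_{S,T}=1$ except at $S=(c,1)$, $T=(1,v)$ for $c,v\colon 2\to 1$, where $\Phi$ is the swap of the apex of $(c,v)\colon 1\leftarrow 2\rightarrow 1$.

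\begin{itemize}
\item The result is still strictly functorially decomposable. Spans with an identity leg have no nontrivial automorphisms, so associators and unitors on embedded 1-cells remain identities.
\item It induces exactly the standard locally semicartesian structure. The span $(c,v)$ is not jointly monic, so it never occurs among the composites entering the derived squares and coherence isomorphisms; those composites have an identity leg or are pullback spans.
\item Yet the whiskering of the 2-cell $x_{0}\colon(1,x_{0})\Rightarrow(c,1)$ by $(1,v)$ is now $x_{1}$ rather than $x_{0}$. Your comparison at the pair $\bigl((c,1),(1,v)\bigr)$ is the identity, so it is not natural here.
\end{itemize}

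So distinct structures have the same image. They are related only through an identity-on-cells pseudofunctor whose constraint on a decomposition pair is nontrivial. To obtain an equivalence you must do one of two things. Either fix a notion of equivalence of structures that admits such constraints, and construct them by means other than associators. Or add the hypothesis that 2-cells compose by the formula in the $\mathbf{Span}(\mathcal{E},\otimes)$ proposition. The paper is itself silent on 2-cells at this point, since its decomposition proposition concerns 1-cells only. Still, your explicit claim that whiskering is determined by the squares is false as stated.
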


\begin{rem}
This means the usual bicategory structure on spans using pullbacks
is the terminal bicategory structure, and that locally subcartesian
categories give a subterminal bicategory structure on spans.
\end{rem}

\begin{prop}
\label{spantocat} There is a canonical pseudofunctor 
\[
\mathbf{Span}\left(\mathcal{E},\otimes\right)\to\mathbf{Cat}\colon\left(s,t\right)\mapsto\Sigma_{t}\nabla_{s}
\]
\end{prop}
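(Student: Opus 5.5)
We construct the pseudofunctor $F\colon\mathbf{Span}\left(\mathcal{E},\otimes\right)\to\mathbf{Cat}$ by sending an object $X$ to the slice $\mathcal{E}/X$ and a span $X\xleftarrow{s}A\xrightarrow{t}Y$ to the composite $\Sigma_{t}\nabla_{s}\colon\mathcal{E}/X\to\mathcal{E}/Y$, using the functors $\nabla_{s}$ of Lemma \ref{affps}.

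\emph{Action on 2-cells.} A 2-cell $f\colon\left(s_{1},t_{1}\right)\Rightarrow\left(s_{2},t_{2}\right)$ is a map $f\colon A\to B$ with $s_{2}f=s_{1}$ and $t_{2}f=t_{1}$. Regard the square with top $f$, right leg $s_{2}$, left leg $\textnormal{id}_{A}$ and bottom $s_{1}$. It commutes, so it factors through the chosen subpullback of $s_{2}$ and $\textnormal{id}$, which by the nullary axiom is the identity-type square with apex $B$. Proposition \ref{beck1} then supplies Beck data $\Sigma_{\textnormal{id}}\nabla_{f}\Rightarrow\nabla_{\textnormal{id}}\Sigma_{\textnormal{id}}$, but this is the wrong direction to use directly. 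The more natural route is to take the counit candidate of Remark \ref{counitmap}: the component $\Sigma_{f}\nabla_{f}\Rightarrow\textnormal{id}$ whose value at $h$ is the top map of the chosen subpullback. Using $\nabla_{s_{1}}=\nabla_{s_{2}f}\cong\nabla_{f}\nabla_{s_{2}}$ (pseudofunctoriality, Lemma \ref{affps}), define $F(f)$ as the composite
\[
\Sigma_{t_{1}}\nabla_{s_{1}}\cong\Sigma_{t_{2}}\Sigma_{f}\nabla_{f}\nabla_{s_{2}}\Rightarrow\Sigma_{t_{2}}\nabla_{s_{2}}.
\]
Naturality of $\epsilon_{f}$ follows from the definition of $\nabla_{f}$ on morphisms. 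Functoriality in 2-cells, namely $\epsilon_{gf}$ being the composite of $\epsilon_{g}$ and $\epsilon_{f}$ whiskered, and $\epsilon_{\textnormal{id}}=\textnormal{id}$, follows from the binary and nullary composite axioms of Definition \ref{locsemidef}.

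\emph{Compositors and unitors.} For spans $\left(s,t\right)\colon X\to Y$ and $\left(u,v\right)\colon Y\to Z$, the composite is $\left(s g'',v f''\right)$ with $P$ the chosen subpullback of $t$ and $u$. The compositor is
\[
\Sigma_{v}\nabla_{u}\Sigma_{t}\nabla_{s}\cong\Sigma_{v}\Sigma_{f''}\nabla_{g''}\nabla_{s}\cong\Sigma_{vf''}\nabla_{sg''},
\]
where the first isomorphism is the invertible Beck data of Proposition \ref{beck1}, applied to the chosen subpullback square, and the second combines strict functoriality of $\Sigma$ with the pseudofunctoriality constraint of $\nabla$. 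The unitor is strict, since identity spans map to $\Sigma_{\textnormal{id}}\nabla_{\textnormal{id}}=\textnormal{id}$ by strict normality.

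\emph{Remaining checks.} Two coherences must be verified.
\begin{enumerate}
\item Naturality of the compositor with respect to horizontal composites of 2-cells.
\item The associativity hexagon.
\end{enumerate}
For the associativity hexagon, both routes are built from Beck data and pseudofunctoriality constraints of the $2\times2$ grid of subpullbacks. By the pasting coherence of Beck data (Proposition \ref{beck1}) and Lemma \ref{spb2}, each component at $h$ is the unique mediating isomorphism between two chosen subpullbacks of the same cospan. Equality then follows from uniqueness of fillers, exactly as in the proof of Lemma \ref{affps}.

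\emph{Main obstacle.} I expect the naturality of the compositor to be the hardest step. The horizontal composite of 2-cells is defined through the grid diagram of the preceding proposition, and $F$ of that composite involves counits $\epsilon_{f'\cdot g''}$ at the new apex. My plan is to reduce everything componentwise to maps between objects of subpullbacks over a common cospan. Both sides of the naturality square are then morphisms into a chosen subpullback, so it suffices to check that they agree after composing with the two projections. That check is a diagram chase using only commutativity of the squares in the grid together with the definition of $\epsilon$ as the top morphism of the chosen square.
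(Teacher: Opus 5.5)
Your proposal is correct and is essentially the paper's proof. The compositors come from the invertible Beck data of Proposition~\ref{beck1} together with pseudofunctoriality of $\nabla$, 2-cells act by whiskering the counit candidate of Remark~\ref{counitmap}, and the coherence checks (which the paper leaves implicit) reduce to uniqueness of mediating maps into chosen subpullbacks. Your abandoned detour is unnecessary, and it is not really the ``wrong direction'': the counit candidate $\Sigma_f\nabla_f\Rightarrow\textnormal{id}$ is itself the Beck data of Proposition~\ref{beck1} for the commuting square whose top and left sides are $f$ and whose other two sides are $\textnormal{id}_B$.
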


\begin{proof}
The pseudofunctoriality constraints are constructed from the Beck
data of Prop. \ref{beck1}, and the action on 2-cells is defined by
whiskering the counit candidate map of Remark \ref{counitmap}.
\end{proof}

\section{Locally subcartesian closed categories\label{locsubcartclosed}}

We now consider the theory of locally subcartesian categories in which
the subcartesian monoidal structure on each slice category is closed.
We will later see that these will turn out to be those for which each
affine base change $\nabla_{p}$ has a right adjoint $\boxtimes_{p}$,
and we will call this right adjoint the dependent subproduct. 
\begin{defn}
A \emph{locally subcartesian closed category} $\mathcal{E}$ is a
locally subcartesian category with pullbacks for which the subcartesian
monoidal structure $f\otimes_{X}g=\Sigma_{g}\nabla_{g}\left(f\right)$
on each slice $\mathcal{E}/X$ is right closed. The semicartesian
and symmetric variants are defined similarly.
\end{defn}

\begin{rem}
In some cases, such as the category of affine spaces, one has a similar
structure except that each indexing is a set. Namely giving a pseudofunctor
from $\mathbf{Set}^{\textnormal{op}}$ to symmetric closed monoidal
categories\footnote{This example where $\mathcal{E}$ is the category of affine spaces
is somewhat close to working, as one has a canonical $\boxtimes_{X\to\mathbf{1}}\colon\mathcal{E}/X\to\mathcal{E}$
since the morphisms in the slice are closed under affine combinations.
The issue here is with cross-fiber combinations.}. We will mostly ignore such cases here as they lack the desired polynomial
structure.
\end{rem}

In much of this section we will make use of Dubuc's adjoint triangle
theorem \cite{dubuc}, and use the following formulation given by
Street and Verity \cite{torsors}. 
\begin{prop}
\label{adjtri} Suppose a functor $P\colon\mathcal{B}\to\mathcal{C}$
has a right adjoint $Q$, such that all components of the unit $\eta\colon1\Rightarrow QP$
are regular monomorphisms in $\mathcal{B}$. Supposing $\mathcal{A}$
has equalizers of coreflexive pairs, any functor $F\colon\mathcal{A}\to\mathcal{B}$
has a right adjoint if and only if $PF$ does. 
\end{prop}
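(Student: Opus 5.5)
The forward direction is immediate: if $F$ has a right adjoint $G$, then $PF$ has right adjoint $GQ$, since adjunctions compose. For the converse, suppose $PF\dashv R$ with $R\colon\mathcal{C}\to\mathcal{A}$. I will build a right adjoint $G\colon\mathcal{B}\to\mathcal{A}$ to $F$ pointwise: for each $B\in\mathcal{B}$ I construct a universal arrow $FGB\to B$ from $F$ to $B$. The idea is to present $B$ as an equalizer of objects of the form $QC$ and to use that $R$ already supplies coreflections along $QC$.

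\textbf{Step 1: a coreflexive pair.} Since $\eta_B\colon B\to QPB$ is a regular monomorphism, it is the equalizer of its cokernel pair. I will use the standard equivalent pair
\[
\eta_{QPB},\;QP\eta_B\colon QPB\rightrightarrows QPQPB .
\]
This pair has the common retraction $Q\epsilon_{PB}$ by the triangle identities, so it is coreflexive. A short argument (Beck-style) shows that $\eta_B$ is its equalizer. The pair is split by $\eta$ naturality, so anything equalizing the cokernel pair of $\eta_B$ factors appropriately. I expect this to be the main technical point: one has to check that regularity of $\eta_B$ gives exactly the equalizer of this specific canonical pair, not merely some pair. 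I would first prove that any map $x$ with $\eta_{QPB}x=QP\eta_B\,x$ equalizes the cokernel pair of $\eta_B$. This uses naturality of $\eta$ and the fact that the cokernel-pair maps followed by $\eta$ factor through $QP$ of them.

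\textbf{Step 2: transport via $R$.} Each $QC$ has a coreflection along $F$, because
\[
\mathcal{B}(FA,QC)\cong\mathcal{C}(PFA,C)\cong\mathcal{A}(A,RC).
\]
So $GQC:=RC$, naturally in $C$. Applying this to the pair from Step 1 gives, by Yoneda, a parallel pair $RPB\rightrightarrows RPQPB$ in $\mathcal{A}$. This pair is again coreflexive, with common retraction $R\epsilon_{PB}$ transported the same way. Define $GB$ as its equalizer, which exists by the hypothesis on $\mathcal{A}$.

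\textbf{Step 3: universality.} For $A\in\mathcal{A}$, the hom-functor $\mathcal{B}(FA,-)$ preserves equalizers, so $\mathcal{B}(FA,B)$ is the equalizer of
\[
\mathcal{B}(FA,QPB)\rightrightarrows\mathcal{B}(FA,QPQPB).
\]
This is naturally isomorphic to the equalizer of $\mathcal{A}(A,RPB)\rightrightarrows\mathcal{A}(A,RPQPB)$, which is $\mathcal{A}(A,GB)$. Naturality in $A$ then gives the representation $\mathcal{B}(F-,B)\cong\mathcal{A}(-,GB)$, and hence $F\dashv G$.

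The only delicate check is that the transported pair in Step 2 really corresponds to the original pair under the bijections. This holds because $\eta_{QPB}$ and $QP\eta_B$ are both maps between objects in the image of $Q$, or are determined by such maps, so their transports are well-defined by naturality of $\mathcal{B}(F-,QC)\cong\mathcal{A}(-,RC)$ in $C$.
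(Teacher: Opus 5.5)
Your proposal is correct and is essentially Dubuc's original proof of the adjoint triangle theorem; the paper does not reprove this result but cites it in Street and Verity's formulation, and your Yoneda-transported map $RPB\to RPQPB$ is an abstract form of Dubuc's explicit composite through the unit of $PF\dashv R$. Two small repairs are needed: in Step 1 the cokernel pair of $\eta_B$ need not exist in $\mathcal{B}$, so work with any pair $u,v\colon QPB\rightrightarrows Z$ that $\eta_B$ equalizes, and note that naturality only gives $\eta_Z u x=\eta_Z v x$, so you must cancel $\eta_Z$ using that it is mono (this is exactly where the hypothesis on \emph{all} unit components is used); and in your closing paragraph $\eta_{QPB}$ is not of the form $Q(g)$, so its transport comes from the Yoneda argument of Step 2 rather than from naturality in $C$.
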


This result will be needed to establish the equivalence of monoidal
closure on slices and each affine base change $\nabla_{p}$ having
a right adjoint.
\begin{rem}
Note the additional assumption that the category $\mathcal{E}$ has
pullbacks. This requirement is largely due to our applications of
the adjoint triangle theorem requiring each dependent sum $\Sigma_{p}$
to have a right adjoint. There are other reasons that we need pullbacks,
for instance one cannot even state the correct distributivity condition
without it, and it is one of the operations needed to manipulate the
data of a polynomial\footnote{An interesting example in light of this is $\mathbf{FI}^{\textnormal{op}}$
which is locally subcartesian with dependent subproducts given by
$\boxtimes_{p}\left(f\right)=B+A\setminus_{f}E$ but is without pullbacks.}. Whilst one can attempt giving constructions such as distributivity
data using only subpullbacks, this results in only non-invertible
comparison data. We regard it as essential that this data is invertible,
so that the later bi-equivalence of polynomials and polynomial functors
respects composition. We also warn the reader that as these categories
have both pullbacks and chosen subpullbacks (and some constructions
will need both) extra care must be taken.
\end{rem}

\subsection{Distributivity subpullbacks}

Before we can understand the more abstract theory of locally subcartesian
closed categories, we will need to know the basic properties of distributivity
subpullbacks, and what it means for an affine base change $\nabla_{p}$
to have a right adjoint. We also show how these properties give rise
to Beck and distributivity morphisms involving such right adjoints. 
\begin{defn}
\label{defdspb} For a locally subcartesian category $\mathcal{E}$
with pullbacks, the \emph{distributivity subpullback} of $f$ along
$p$ is defined as the subpullback $\left(e,p',d\right)$ as below
\[
\xymatrix@=1em{L\ar[rrrr]^{s}\ar@/_{1pc}/[rdd]_{h}\ar@{..>}[rd] &  & \; & \; & M\ar@/^{1pc}/[lddd]_{v}\ar@{..>}[ld]\\
 & R\myard{p'}{rr}\ar[d]_{e}\ar@{}[rru]|-{\textnormal{spb}} &  & D\ar[dd]^{d}\\
 & A\ar[d]_{f}\\
 & E\ar[rr]_{p}\ar@{}[rruu]|-{\textnormal{dspb}} &  & B
}
\]
which is terminal in the sense that given any other subpullback $\left(h,s,v\right)$
around $f$ and $p$ there exists a unique commuting diagram as above.
We call such a $p$ \emph{subexponentiable }if all distributivity
subpullbacks over $p$ exist.
\end{defn}

We now see that subexponentiable morphisms $p$ are exactly those
for which each affine base change $\nabla_{p}$ has a right adjoint.
\begin{prop}
If $\nabla_{p}$ has a right adjoint $\boxtimes_{p}$ then all distributivity
subpullbacks along $p$ exist, and at a general $f$ is given by
\[
\xymatrix@=1em{E\otimes_{B}D\myar{\pi_{2}}{rr}\ar[d]_{\epsilon_{f}} &  & D\ar[dd]^{\boxtimes_{p}f}\\
A\ar[d]_{f}\\
E\ar[rr]_{p}\ar@{}[rruu]|-{\textnormal{dspb}} &  & B
}
\]
Conversely, if all distributivity subpullbacks along $p$ exist then
$\nabla_{p}$ has a right adjoint.
\end{prop}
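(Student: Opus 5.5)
The argument is a direct translation between the terminal-object description of distributivity subpullbacks and the universal-arrow description of the adjunction $\nabla_{p}\dashv\boxtimes_{p}$. The key observation is that, for fixed $f\colon A\to E$, a subpullback $(h,s,v)$ around $f$ and $p$ is the same data as an object $v\colon M\to B$ of $\mathcal{E}/B$ together with a morphism $\nabla_{p}(v)\to f$ in $\mathcal{E}/E$. Indeed, the square formed by $fh$, $s$, $v$ and $p$ is a subpullback, so it is coherently isomorphic to the chosen subpullback $E\otimes_{B}M$. Transporting $h$ along this isomorphism gives a map $E\otimes_{B}M\to A$ over $E$, that is, a morphism $\nabla_{p}(v)\to f$. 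Conversely, any such morphism, precomposed onto the chosen square, gives a subpullback around $f$ and $p$. The uniqueness of all these identifications follows from the subpullback property.

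For the forward direction, suppose $\boxtimes_{p}$ exists with counit $\epsilon$. I set $D$ with $d=\boxtimes_{p}f$, take the chosen subpullback $E\otimes_{B}D$ of $p$ and $d$, and let $e=\epsilon_{f}$. This is a subpullback around $f$ and $p$ by the observation above. Now let $(h,s,v)$ be another one. It corresponds to a morphism $\bar h\colon\nabla_{p}(v)\to f$, and adjunction gives a unique $\bar h^{\sharp}\colon v\to\boxtimes_{p}f$ in $\mathcal{E}/B$ with $\epsilon_{f}\cdot\nabla_{p}(\bar h^{\sharp})=\bar h$. I take the right dotted arrow $M\to D$ to be $\bar h^{\sharp}$. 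The left dotted arrow $L\to E\otimes_{B}D$ is $\nabla_{p}(\bar h^{\sharp})$ precomposed with the isomorphism $L\cong E\otimes_{B}M$. This arrow is the unique filler making the top square commute: Lemma \ref{spb1} applies to the vertical composite of the square for $v$ and the square for $\bar h^{\sharp}$, and it is exactly how $\nabla_{p}$ acts on morphisms in Lemma \ref{affps}. Commutativity of the lower-left triangle is then the counit equation. For uniqueness, any diagram of the required shape has its right arrow $M\to D$ making the top square a subpullback (Lemma \ref{subpasting}), hence a morphism $v\to d$ whose transpose is $\bar h$. It therefore equals $\bar h^{\sharp}$, and the left arrow is then forced by the subpullback property of the top square.

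For the converse, I define $\boxtimes_{p}f:=d$ from the chosen distributivity subpullback $(e,p',d)$, with counit $\epsilon_{f}:=e$ precomposed with the isomorphism $\nabla_{p}(d)\cong R$. I then show that $\epsilon_{f}$ is a universal arrow from $\nabla_{p}$ to $f$. Given $k\colon\nabla_{p}(v)\to f$, the chosen square for $v$ together with $k$ is a subpullback around $f$ and $p$. Terminality yields a unique $u\colon v\to d$ and a unique top filler. The top filler must be $\nabla_{p}(u)$ up to the canonical isomorphism, again by Lemma \ref{spb1}, so $\epsilon_{f}\cdot\nabla_{p}(u)=k$. Uniqueness of $u$ comes from the uniqueness clause of terminality, since any $u'$ with $\epsilon_{f}\nabla_{p}(u')=k$ produces a diagram of the required shape. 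Universal arrows at every $f$ assemble into a right adjoint in the standard way.

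The main obstacle is the bookkeeping that identifies the top filler in the distributivity diagram with $\nabla_{p}$ of the right-hand arrow, through the canonical isomorphisms of subpullbacks. This is where Lemmas \ref{subpasting}, \ref{spb1} and \ref{spb2} must be applied carefully, because existence of fillers comes only from coherence and not from a universal property. Pullbacks in $\mathcal{E}$ play no role here; only the chosen subpullbacks are used.
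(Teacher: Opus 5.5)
Your proposal is correct and takes the same approach as the paper. The paper's proof is a single line: distributivity subpullbacks along $p$ are exactly universal arrows (counits) for $\nabla_p\dashv\boxtimes_p$, and Lemma \ref{subpasting} makes the subpullback condition on the top square automatic; you supply the filler bookkeeping it leaves implicit. One small slip: in your uniqueness step, the left arrow $L\to E\otimes_B D$ is forced by the subpullback property of the lower (chosen) square $E\otimes_B D$, as in your earlier appeal to Lemma \ref{spb1}, not by the subpullback property of the top square.
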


\begin{proof}
This is the adjunction $\nabla_{p}\dashv\boxtimes_{p}$ described
in terms of universal arrows via the counit $\epsilon_{f}$. The assumption
that the induced square is a subpullback is redundant by Lemma \ref{subpasting}.
\end{proof}

\begin{rem}
Taking $\mathcal{E}$ to be the Lawvere quantale, for any composable
pair of morphisms ($\geq$ assertions) $f\colon A\to E$ and $p\colon E\to B$
the dependent subproduct is given by $\text{\ensuremath{\boxtimes_{p}\left(f\right)}}\colon A-E+B\to B$.
Note the vertex of the subpullback of $p$ and $\boxtimes_{p}\left(f\right)$
is $A$, different from the pullback which is $\textnormal{max}\left(E,A-E+B\right)$,
so we see that distributivity subpullbacks are not pullbacks in $\mathcal{E}$
in general.
\end{rem}

\begin{rem}
Similar to the case of subpullbacks, we assume without loss of generality
that identity maps are preserved; that is $\boxtimes_{p}\left(\textnormal{id}_{E}\right)=\textnormal{id}_{B}$.
\end{rem}

\begin{prop}
Suppose $\mathcal{E}$ is a locally subcartesian category with pullbacks.
Then subexponentiable maps are subpullback stable.
\end{prop}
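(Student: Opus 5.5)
Let $p\colon E\to B$ be subexponentiable and let $q\colon C\to B$ be any morphism, with chosen subpullback
\[
\xymatrix@=1em{E\otimes_{B}C\ar[d]_{p'}\myar{q'}{rr} &  & E\ar[d]^{p}\\
C\ar[rr]_{q} &  & B
}
\]
so that $p'=\nabla_{q}(p)$. We must show that $\nabla_{p'}\colon\mathcal{E}/C\to\mathcal{E}/(E\otimes_{B}C)$ has a right adjoint. By the preceding proposition this is equivalent to the existence of all distributivity subpullbacks along $p'$. We construct it directly by the adjoint triangle theorem (Prop. \ref{adjtri}) rather than building distributivity subpullbacks by hand. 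Pullbacks exist, so $\Delta_{q'}\colon\mathcal{E}/E\to\mathcal{E}/(E\otimes_{B}C)$ is available with right adjoint $\Pi$ only when $q'$ is exponentiable, which we do not have. We therefore work through $\Sigma$ instead. Take $P=\Sigma_{q}\colon\mathcal{E}/C\to\mathcal{E}/B$. Its right adjoint is $Q=\Delta_{q}$, which exists since $\mathcal{E}$ has pullbacks. The unit component at $h\colon H\to C$ is $\langle 1,h\rangle\colon H\to C\times_{B}H$, a section of a projection and hence a split, so regular, monomorphism in $\mathcal{E}/C$. The domain $\mathcal{E}/(E\otimes_{B}C)$ has equalizers of coreflexive pairs, computed in $\mathcal{E}$; we should check that $\mathcal{E}$ has them. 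Coreflexive pairs have equalizers computed as the pullback of the pair along the diagonal-type square, which pullbacks provide. If this fails, we fall back on an explicit construction, described below.

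It then suffices, with $F$ the would-be right adjoint's mate, to show that some composite has a right adjoint. We actually apply the theorem to the functor $F=\Sigma_{p'}$ viewed suitably. The cleaner route is the dual formulation: we need a right adjoint of $\nabla_{p'}$, so we should show that $\nabla_{p'}$ composed with something known is adjoint. The key identity is the $\Sigma$-$\nabla$ Beck isomorphism from Prop. \ref{beck1}, applied to the subpullback square above:
\[
\nabla_{p'}\Delta_{q}\;\longleftarrow\;\text{(compare)}\qquad \Sigma_{q'}\nabla_{p'}\cong\nabla_{p}\Sigma_{q}.
\]
Here $\nabla_{p}\Sigma_{q}$ has a right adjoint $\Delta_{q}\boxtimes_{p}$. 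So set $\mathcal{A}=\mathcal{E}/C$, $\mathcal{B}=\mathcal{E}/(E\otimes_{B}C)$, and $\mathcal{C}=\mathcal{E}/E$, with $P=\Sigma_{q'}$, $Q=\Delta_{q'}$ and $F=\nabla_{p'}$. Then $PF=\Sigma_{q'}\nabla_{p'}\cong\nabla_{p}\Sigma_{q}$ has right adjoint $\Delta_{q}\boxtimes_{p}$. The unit of $\Sigma_{q'}\dashv\Delta_{q'}$ consists of graph maps, which are split monos. The category $\mathcal{A}=\mathcal{E}/C$ needs equalizers of coreflexive pairs. Hence $\nabla_{p'}$ has a right adjoint, namely $\boxtimes_{p'}$, and $p'$ is subexponentiable.

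The main obstacle is exactly the existence of equalizers of coreflexive pairs, since only pullbacks are assumed. For a coreflexive pair $u,v\colon X\rightrightarrows Y$ with common retraction $r$, the equalizer is the pullback of $\langle u,v\rangle$ along the diagonal $Y\to Y\times Y$. This needs binary products, which we lack without a terminal object. We will instead form it within a slice: $u,v$ are maps over $Y$ via $r$ after noting $ru=rv=1$. More carefully, regard everything over a common base and use that pullbacks in $\mathcal{E}$ give finite limits in each slice, with equalizers computed as pullback against the slice diagonal. If that bookkeeping resists, we avoid the triangle theorem entirely. We then construct the distributivity subpullback of $f\colon A\to E\otimes_{B}C$ along $p'$ explicitly: take $D$ to be the pullback along $q$ of $\boxtimes_{p}(\Sigma_{q'}f)$, restricted by the equalizer-like condition that the evaluated map lands over $C$. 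Its terminality then follows from the terminality of the dspb along $p$, together with Lemmas \ref{subpasting} and \ref{spb1} used to paste subpullbacks.
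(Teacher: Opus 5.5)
Your final argument is the paper's proof: the Beck isomorphism $\Sigma_{q'}\nabla_{p'}\cong\nabla_{p}\Sigma_{q}$ of Prop.~\ref{beck1} gives $\Sigma_{q'}\nabla_{p'}$ the right adjoint $\Delta_{q}\boxtimes_{p}$. The adjoint triangle theorem, applied with $P=\Sigma_{q'}\dashv\Delta_{q'}$ and $\mathcal{A}=\mathcal{E}/C$, then produces $\boxtimes_{p'}$; here $\mathcal{E}/C$ has all finite limits because it has the terminal object $1_C$ and pullbacks, so your fallback construction is unnecessary. One justification needs fixing: the unit $\langle h,1\rangle\colon H\to(E\otimes_{B}C)\times_{E}H$ is split only in $\mathcal{E}$, since the projection to $H$ is not a map over $E\otimes_{B}C$; its regularity in the slice instead follows by exhibiting it there as the equalizer of $\langle\pi_{1},\pi_{1}\rangle$ and $\langle\pi_{1},h\pi_{2}\rangle$ into $(E\otimes_{B}C)\times_{E}(E\otimes_{B}C)$.
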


\begin{proof}
For a subpullback square of a cospan $p$ and $f$ we have the Beck
isomorphism $\Sigma_{f'}\nabla_{p'}\cong\nabla_{p}\Sigma_{f}$ by
Lemma \ref{beck1} and if $p$ is subexponentiable the right hand
side has a right adjoint and so by Prop. \ref{adjtri} so does $\nabla_{p'}$.
Note that unit components of adjunctions of the form $\Sigma_{f}\dashv\Delta_{f}$
are regular monomorphisms in the relevant slice category (which has
all finite limits).
\end{proof}

It is worth noting there is another characterization of subexponentiable
maps based on an analogue of ``powerful'' morphisms.
\begin{prop}
For any morphism $p\colon E\to B$ in a locally subcartesian category
$\mathcal{E}$ with finite limits, the functor $\nabla_{p}\colon\mathcal{E}/B\to\mathcal{E}/E$
has a right adjoint if and only if $\Sigma_{!_{E}}\nabla_{p}\colon\mathcal{E}/B\to\mathcal{E}$
does.
\end{prop}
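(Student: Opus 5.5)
Apply the adjoint triangle theorem (Prop. \ref{adjtri}) with $F=\nabla_{p}\colon\mathcal{E}/B\to\mathcal{E}/E$, $\mathcal{A}=\mathcal{E}/B$, $\mathcal{B}=\mathcal{E}/E$, $\mathcal{C}=\mathcal{E}$, and $P=\Sigma_{!_{E}}\colon\mathcal{E}/E\to\mathcal{E}$, the forgetful functor. Since $\mathcal{E}$ has finite limits, $P$ has right adjoint $Q=\Delta_{!_{E}}=E\times(-)$, sending $C$ to the projection $E\times C\to E$. The claimed equivalence is then literally the conclusion of Prop. \ref{adjtri}, provided its two hypotheses hold.

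The first hypothesis is that every unit component of $\Sigma_{!_{E}}\dashv\Delta_{!_{E}}$ is a regular monomorphism in $\mathcal{E}/E$. At an object $a\colon A\to E$ the unit is $\langle a,\textnormal{id}_{A}\rangle\colon A\to E\times A$ over $E$. This is the graph of $a$, so it is the equalizer of the pair $a\circ\pi_{2},\pi_{1}\colon E\times A\rightrightarrows E$. It is therefore regular monic in $\mathcal{E}$. Since $\mathcal{E}/E$ has finite limits created by the forgetful functor (and $E\times A\to E$ composed with either map remains a map over $E$, after viewing $E$ as $E\times_E E$ appropriately), it is also regular monic in the slice. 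A cleaner route is to note it is the equalizer in $\mathcal{E}/E$ of the unit and its composite with the counit-induced map, i.e. of the coreflexive pair $\eta_{QPa},QP\eta_a$; this is the standard fact already invoked in the preceding proof for $\Sigma_{f}\dashv\Delta_{f}$.

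The second hypothesis is that $\mathcal{A}=\mathcal{E}/B$ has equalizers of coreflexive pairs. This holds because $\mathcal{E}$ has finite limits, hence so does every slice.

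Given both, $\nabla_{p}$ has a right adjoint if and only if $P\nabla_{p}=\Sigma_{!_{E}}\nabla_{p}$ does, which proves the statement. The only step needing any care is the regular-mono check, specifically making sure the equalizing pair lives in $\mathcal{E}/E$ rather than just $\mathcal{E}$. I expect this to be routine, and it is the same observation as in the subpullback-stability argument above.
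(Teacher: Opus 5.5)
Your proposal is correct and is essentially the paper's proof: the paper simply cites the argument of Street--Verity's Prop.~2.4, which is Dubuc's adjoint triangle theorem applied to $\Sigma_{!_{E}}\dashv\Delta_{!_{E}}$, here with $\Sigma_{!_{E}}\nabla_{p}=(-)\otimes_{B}E$ in place of $E\times_{B}(-)$. One small tidy-up: the pair $a\pi_{2},\pi_{1}\colon E\times A\rightrightarrows E$ is not a pair of maps in $\mathcal{E}/E$ (use $\langle\pi_{1},\pi_{1}\rangle,\langle\pi_{1},a\pi_{2}\rangle\colon E\times A\rightrightarrows E\times E$ over the first projection), but your coreflexive-pair argument via $\eta_{QPa},QP\eta_{a}$ already settles the regular-mono hypothesis correctly.
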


\begin{proof}
This follows by the same argument as \cite[Prop. 2.4]{torsors} but
with $\Sigma_{!_{E}}\nabla_{p}=\left(-\right)\otimes_{B}E$ the apex
of the subcartesian square. 
\end{proof}

\subsection{Calibrating spans with subpullbacks}

It was noticed by Street \cite{polyspans} that distributivity pullbacks
may be seen as bipullbacks in bicategories of spans (with reversed
2-cells) over a locally cartesian closed $\mathcal{E}$. Similar observations
also appear in literature under the alternative name of ``lax pullback
complements'' using pullbacks in a 1-category of spans \cite{laxpbcomp}.
We now give the analogue for locally subcartesian closed $\mathcal{E}$. 

Here a bipullback in a bicategory refers to the pseudo-terminal pseudo-commuting
square over a cospan. Note this definition does not directly involve
lax commuting squares; instead the property can be later upgraded
to the lax setting by requiring one leg to be a groupoid opfibration.
\begin{lem}
Suppose $\mathcal{E}$ is locally subcartesian closed. In $\mathbf{Span}\left(\mathcal{E},\otimes\right)$
the bipullback of spans $\left(1,f\right)$ and $\left(1,g\right)$
is given by the left square below 
\[
\xymatrix@=1em{P\ar@{}[rr]|-{/}\ar[rr]^{\left(1,g'\right)}\ar@{}[dd]|-{/}\ar[dd]_{\left(1,f'\right)} &  & X\ar@{}[dd]|-{/}\ar[dd]^{\left(1,f\right)} &  &  &  & P & P\ar[r]^{g'}\ar[l]_{1} & X\\
 &  &  &  & : &  & P\ar[d]_{f'}\ar[u]^{1} &  & X\ar[d]^{f}\ar[u]_{1}\\
E\ar@{}[rr]|-{/}\ar[rr]_{\left(1,g\right)} &  & I &  &  &  & E & E\ar[r]_{g}\ar[l]^{1} & I
}
\]
where $f'$ and $g'$ are constructed by taking the underlying pullback
in $\mathcal{E}$ and the right describes the underlying span data.
\end{lem}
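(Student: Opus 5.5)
The plan is to verify the bipullback property directly, via an equivalence of hom-categories. Throughout I use that, for spans whose right-hand composite leg is an identity, composition in $\mathbf{Span}\left(\mathcal{E},\otimes\right)$ is strict. Indeed, composing a span $(a,b)\colon Q\to X$ (apex $T$) with $(1,f)\colon X\to I$ requires the chosen subpullback of the cospan $b$ and $\textnormal{id}_X$. By the nullary axiom of Definition \ref{locsemidef} and symmetry, this is the trivial square with apex $T$. So $(a,b);(1,f)=(a,fb)$ on the nose, and a 2-cell $k$ between such spans whiskers to the same underlying map $k$.

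First, the square in the statement pseudo-commutes, in fact strictly. By the remark above, $(1,f');(1,g)=(1,gf')$ and $(1,g');(1,f)=(1,fg')$, and these agree since $gf'=fg'$ in the pullback. So we may take the identity 2-cell.

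Next, fix an object $Q$. I will show that the functor from $\mathbf{Span}(Q,P)$ to the category of pseudo-commuting cones over the cospan with vertex $Q$ is an equivalence. A cone consists of spans $(a,b)\colon Q\to X$ with apex $T$ and $(c,d)\colon Q\to E$ with apex $T'$, together with an invertible 2-cell $(a,fb)\cong(c,gd)$. Such a 2-cell is an isomorphism $\varphi\colon T\to T'$ with $c\varphi=a$ and $gd\varphi=fb$.

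For essential surjectivity, the pullback property of $P$ yields a unique $w\colon T\to P$ with $g'w=b$ and $f'w=d\varphi$. Then the span $(a,w)$ maps to a cone isomorphic to the given one, via $\textnormal{id}_T$ and $\varphi$.

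For full faithfulness, take spans $(u,w)$ and $(u',w')$ with apices $S$ and $S'$. A morphism of their image cones is a pair of 2-cells $k_1\colon(u,g'w)\Rightarrow(u',g'w')$ and $k_2\colon(u,f'w)\Rightarrow(u',f'w')$, i.e. maps $S\to S'$ compatible with the (identity) cone 2-cells. Because whiskering by $(1,g)$ and $(1,f)$ does not change underlying maps, this compatibility says exactly $k_1=k_2=:k$. Then $g'w'k=g'w$ and $f'w'k=f'w$, so joint monicity of the pullback projections gives $w'k=w$. Hence $k$ is precisely a 2-cell $(u,w)\Rightarrow(u',w')$, and such 2-cells correspond bijectively to cone morphisms.

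The main point of care, rather than a real obstacle, is the first step: justifying that composition with spans of the form $(1,h)$ is strict, and that whiskering acts trivially on underlying maps. This needs the nullary axiom on both legs (via symmetry) and the explicit description of horizontal composition of 2-cells in $\mathbf{Span}\left(\mathcal{E},\otimes\right)$. There, all the auxiliary squares degenerate to identities, so the suppressed coherence isomorphisms are identities. Once this is in place, the subcartesian structure plays no further role, and only the genuine pullback in $\mathcal{E}$ is used.
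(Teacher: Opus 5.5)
Your proof is correct and takes the paper's approach: the paper only remarks that the claim is a trivial exercise because it involves nothing beyond composites of the form $(s,t);(1,f)$. Your observation that these composites, together with the relevant whiskerings and associators, are strict by the nullary axiom is what lets the ordinary pullback argument go through unchanged, and your proof carries out that exercise explicitly.
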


\begin{proof}
This property that the canonical map $f\mapsto\left(1,f\right)$ sends
pullback in $\mathcal{E}$ to bipullbacks in $\mathbf{Span}\left(\mathcal{E},\otimes\right)$
is a trivial exercise since it only needs composition of spans of
the form $\left(s,t\right);\left(1,f\right)$.
\end{proof}

Since the above bipullback is built from spans of the form $\left(1,f\right)$
the non-trivial subpullback structure is never used. In contrast,
the following lemma does require the subpullback structure (and the
resulting bipullback is terminal with respect to this structure).
\begin{lem}
Suppose $\mathcal{E}$ is locally subcartesian closed. In $\mathbf{Span}\left(\mathcal{E},\otimes\right)$
the bipullback of spans $\left(1,f\right)$ and $\left(p,1\right)$
is given by the left square below
\[
\xymatrix@=1em{D\ar@{}[rr]|-{/}\ar[rr]^{\left(p',e\right)}\ar@{}[dd]|-{/}\ar[dd]_{\left(1,d\right)} &  & X\ar@{}[dd]|-{/}\ar[dd]^{\left(1,f\right)} &  &  &  & D & R\ar[r]^{e}\ar[l]_{p'} & X\\
 &  &  &  & : &  & D\ar[d]_{d}\ar[u]^{1} &  & X\ar[d]^{f}\ar[u]_{1}\\
B\ar@{}[rr]|-{/}\ar[rr]_{\left(p,1\right)} &  & E &  &  &  & B & E\ar[r]_{1}\ar[l]^{p} & E
}
\]
where $e,d$ and $p'$ are constructed by forming the distributivity
subpullback as in Definition \ref{defdspb} and the right describes
the underlying span data.
\end{lem}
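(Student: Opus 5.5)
The plan is to check the bipullback property directly, by comparing the composition of spans in $\mathbf{Span}\left(\mathcal{E},\otimes\right)$ with the universal property of the distributivity subpullback in Definition \ref{defdspb}. Equivalently, one can use the adjunction $\nabla_{p}\dashv\boxtimes_{p}$ from the proposition following that definition.

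First I would check that the square pseudo-commutes. The composite $\left(p',e\right);\left(1,f\right)$ only involves identity legs, so by the nullary axiom its chosen square is trivial and it equals $\left(p',fe\right)$. The other composite $\left(1,d\right);\left(p,1\right)$ is formed by taking the chosen subpullback of $d$ and $p$. Its apex is $R$ (up to the canonical isomorphism), with legs $p'$ to $D$ and $fe$ to $E$, because $\left(e,p',d\right)$ is a subpullback around $f$ and $p$. The two composites therefore agree up to a canonical invertible 2-cell.

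Next comes essential surjectivity on cones. A pseudo-commuting square over the cospan $\left(1,f\right)$, $\left(p,1\right)$ with vertex $Z$ consists of the following data:
\begin{itemize}
\item spans $Z\xleftarrow{s_{1}}L\xrightarrow{h'}X$ and $Z\xleftarrow{s_{2}}M\xrightarrow{v}B$;
\item an invertible 2-cell $\left(s_{1},fh'\right)\cong\left(s_{2},v\right);\left(p,1\right)$.
\end{itemize}
The right-hand composite has apex the chosen subpullback $E\otimes_{B}M$ of $v$ and $p$. So the 2-cell is an isomorphism $L\cong E\otimes_{B}M$ compatible with the legs to $Z$ and $E$. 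Transporting along it, we may take $L$ to be a subpullback $\left(h,s,v\right)$ around $f$ and $p$, with the leg $s_{1}=s_{2}s$ simply riding along.

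Terminality of the distributivity subpullback then gives a unique $k\colon M\to D$ with $dk=v$, together with a unique $L\to R$ commuting with everything. The candidate factorization is the span $\left(s_{2},k\right)\colon Z\to D$. We have $\left(s_{2},k\right);\left(1,d\right)=\left(s_{2},dk\right)=\left(s_{2},v\right)$ on the nose. The composite $\left(s_{2},k\right);\left(p',e\right)$ has apex the chosen subpullback of $k$ and $p'$. By Lemma \ref{subpasting} and Lemma \ref{spb1}, applied to the pasting of this square with the subpullback $\left(e,p',d\right)$, that apex is canonically isomorphic to $L$, compatibly with $s_{2}s$ and $h'$. This gives the required invertible 2-cells, and I expect them to be compatible with the given one by uniqueness of mediating maps.

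The main obstacle is the 2-cell level, namely full faithfulness of the comparison from spans $Z\to D$ to cones. Here I would proceed as follows.
\begin{itemize}
\item A 2-cell $\left(s_{2},k_{1}\right)\Rightarrow\left(s_{2}',k_{2}\right)$ is a map $m\colon M\to M'$ over $Z$ with $k_{2}m=k_{1}$.
\item A morphism of cones is a pair of maps between the $L$'s and the $M$'s, compatible with all legs and with the isomorphisms.
\item Given such a pair, the $M$-component is a map in $\mathcal{E}/B$. Its image under $\nabla_{p}$ is the $L$-component, by the coherence of chosen subpullbacks, namely the action of $\nabla_{p}$ on morphisms from Lemma \ref{affps}.
\item The adjunction $\nabla_{p}\dashv\boxtimes_{p}$ then translates compatibility with the counit $\epsilon_{f}$ into the equation $k_{2}m=k_{1}$.
\item Conversely, uniqueness of fillers into subpullbacks shows that the $L$-component is determined by the $M$-component.
\end{itemize}
Since the $Z$-legs are only carried along, this yields the bijection on 2-cells, and hence that the displayed square is a bipullback.
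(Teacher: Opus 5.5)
Your proposal is correct and is essentially the paper's argument. The paper only points to Street's direct verification for ordinary distributivity pullbacks, and you carry that verification out, with Lemma \ref{subpasting} in place of the pullback lemma and the terminality in Definition \ref{defdspb} (equivalently the adjunction $\nabla_{p}\dashv\boxtimes_{p}$) in place of the universal property of the distributivity pullback. One point to make explicit in the 2-cell compatibility step: a map into $E\otimes_{B}M$ is determined by both of its projections, not just by its legs to $Z$ and $E$, so that step relies on your having defined $s$ as the $M$-projection of the given isomorphism.
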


\begin{proof}
This is mostly the same argument as given in the case of usual distributivity
pullbacks by Street \cite{polyspans}.
\end{proof}

The description of the bicategory of polynomials for situations more
general than a plain locally cartesian closed category is given by
Street in the context of a protocalibration \cite{polyspans,proto}.
Our situation is indeed more general and so it will be convenient
to use this theory here. However, since our situation is only slightly
more general in that we are still using spans just with a more general
composition, the resulting polynomial calculus will appear quite similar.
\begin{defn}
\cite[Def. 1.2]{proto} A \emph{protocalibration} on a bicategory
$\mathscr{C}$ is a set $\mathscr{R}$ of 1-cells such that (1) $\mathscr{R}$
contains equivalences, and is closed under invertible 2-cells; (2)
$\mathscr{R}$ is closed under 1-cell composition; (3) the bicategorical
pullback of any $f\in\mathscr{R}$ along $g\in\mathscr{C}$ exists
and lies in $\mathscr{R}$; (4) every 1-cell in $\mathscr{R}$ is
a groupoid opfibration.
\end{defn}

We now use the results of this section to show bipullbacks exist (and
are stable) in our subcartesian bicategory of spans with one leg in
the restricted class of maps of the form $\left(1,f\right)$, thus
showing we have such a protocalibration.
\begin{cor}
\label{chosenproto} The replete image $\mathscr{R}$ of the canonical
pseudofunctor 
\[
\mathcal{E}\to\mathbf{Span}\left(\mathcal{E},\otimes\right)^{\textnormal{co}}\colon f\mapsto\left(1,f\right)
\]
is a protocalibration\footnote{There is an argument here for instead using $\mathbf{Span}\left(\mathcal{E},\otimes\right)^{\textnormal{coop}}$
with 1-cells also reversed, based on the Kleisli viewpoint. However,
we will stay with the usual version to match the conventions used
by Street.}.
\end{cor}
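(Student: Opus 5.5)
We check the four axioms of a protocalibration for the replete image $\mathscr{R}$ of $f\mapsto\left(1,f\right)$ in $\mathbf{Span}\left(\mathcal{E},\otimes\right)^{\textnormal{co}}$, using the two preceding bipullback lemmas.

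Axioms (1) and (2) are formal. By construction $\mathscr{R}$ is replete, so it is closed under invertible 2-cells. Identities $\left(1,1\right)$ lie in $\mathscr{R}$. An equivalence in the span bicategory is a span $\left(s,t\right)$ with both legs invertible, and it is isomorphic via the 2-cell $s^{-1}$ to $\left(1,ts^{-1}\right)$. For composition, $\left(1,f\right);\left(1,g\right)$ is computed by the chosen subpullback of $f$ along the identity. By the nullary coherence axiom of Definition \ref{locsemidef} this is strictly the identity square, so the composite is $\left(1,gf\right)\in\mathscr{R}$.

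For axiom (3), take $\left(1,f\right)\in\mathscr{R}$ and an arbitrary span $\left(p,q\right)$. By functorial decomposability, $\left(p,q\right)\cong\left(p,1\right);\left(1,q\right)$. We therefore compute the bipullback in two stages. First pull back along $\left(1,q\right)$ using the pullback lemma, obtaining $\left(1,f'\right)$ with $f'$ the pullback of $f$ along $q$ in $\mathcal{E}$. Then pull this back along $\left(p,1\right)$ using the distributivity-subpullback lemma, obtaining $\left(1,d\right)$ with $d=\boxtimes_{p}f'$. Composites of bipullback squares are bipullback squares (a standard bicategorical pasting fact), so the bipullback of $\left(1,f\right)$ along $\left(p,q\right)$ exists and is of the form $\left(1,d\right)\in\mathscr{R}$. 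Passing to $\left(-\right)^{\textnormal{co}}$ only reverses 2-cells. Bipullbacks involve invertible 2-cells only, so they are unaffected, although we should check that the lemmas were stated in the orientation used here.

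Axiom (4) is the main point: each $\left(1,f\right)$ must be a groupoid opfibration in $\mathbf{Span}\left(\mathcal{E},\otimes\right)^{\textnormal{co}}$. Unwinding the definition, we must show that for every object $K$ the functor $\left(1,f\right)\circ-\colon\mathbf{Span}(K,X)^{\textnormal{op}}\to\mathbf{Span}(K,I)^{\textnormal{op}}$ is a groupoid opfibration, i.e.\ a discrete-ish lifting property with unique lifts. Postcomposition with $\left(1,f\right)$ involves only the trivial identity subpullback, so it sends $\left(s,t\right)$ to $\left(s,ft\right)$ and a 2-cell $h$ to $h$ itself. Given $\left(s,t\right)$ and a 2-cell $h\colon\left(s',t'\right)\Rightarrow\left(s,ft\right)$ in the original direction, so that $s'=sh$ and $t'=fth$, the unique lift is $h\colon\left(s',th\right)\Rightarrow\left(s,t\right)$. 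We then check uniqueness of the lifted 1-cell and 2-cell, and that every 2-cell in the fibre is invertible. In the fibre, a 2-cell $k$ with $fk$-compatibility and $tk=t$, $sk=s$ forces $k$ to be a map over both legs, and we must show it is an identity or iso. Note $k$ need not be the identity in general, so the lift is only unique up to this fibre groupoid; confirming that Street's definition asks precisely for this is where we expect most care to be needed.

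This is essentially Street's argument for ordinary spans, which is where the real work lies and where the direction conventions in $\left(-\right)^{\textnormal{co}}$ must be tracked carefully. The only place the subpullback structure intervenes at all is through composition with $\left(1,f\right)$, and that composition is strict by the nullary coherence axiom.
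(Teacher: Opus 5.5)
Your route is the paper's. For axiom (3) you factor $(p,q)=(p,1);(1,q)$ and paste the pullback lemma with the distributivity-subpullback lemma. For axiom (4) you use that postcomposition with $(1,f)$ only involves identity subpullbacks, so it acts strictly by $(s,t)\mapsto(s,ft)$ and $h\mapsto h$. Your treatment of (2), and of the invariance of bipullbacks under $(-)^{\textnormal{co}}$, is fine.

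However, you leave axiom (4) unfinished, and the one claim you make about it is false. A 2-cell lies in the fibre when its image is an identity 2-cell. Postcomposition with $(1,f)$ sends $k$ to the same morphism $k$ of $\mathcal{E}$, so this forces $k=1_A$. The fibres are therefore discrete, and the lift $h\colon(s',th)\Rightarrow(s,t)$ you found is strictly unique: any lift $k\colon(s'',t'')\Rightarrow(s,t)$ of $h$ has $k=h$, $s''=s'$ and $t''=th$. It is also cartesian. If $k\colon(s'',t'')\Rightarrow(s,t)$ has image $hm$ for some $m\colon(s'',ft'')\Rightarrow(s',t')$, then $m$ itself is a 2-cell $(s'',t'')\Rightarrow(s',th)$, because $s'm=shm=sk=s''$ and $thm=tk=t''$. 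Hence postcomposition with $(1,f)$ is a discrete fibration of hom-categories of $\mathbf{Span}(\mathcal{E},\otimes)$, and a fortiori a groupoid fibration. Equivalently, it is a groupoid opfibration on the opposite hom-categories of $\mathbf{Span}(\mathcal{E},\otimes)^{\textnormal{co}}$. Any stability requirement under precomposition is automatic, since every arrow of a groupoid fibration is cartesian. This is exactly the paper's remark that the usual proof goes through ``using only that identity maps are respected''; there is no ``uniqueness up to a fibre groupoid'' to worry about.

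A smaller point in (1): that equivalences in $\mathbf{Span}(\mathcal{E},\otimes)$ have invertible legs is true, but it is not inherited verbatim from the pullback case, since composite apexes are only subpullbacks. Here is one way to prove it. Let $(s,t)$ be an equivalence with pseudo-inverse $(u,v)$. The pseudofunctor of Prop.~\ref{spantocat} sends it to an equivalence $\Sigma_t\nabla_s$, which must preserve the terminal object. By the nullary axiom $\Sigma_t\nabla_s(1_X)=t$, so $t$ is invertible, and likewise $v$. Then $(s,t)\cong(st^{-1},1)$ and $(u,v)\cong(uv^{-1},1)$. Their two composites are computed by identity subpullbacks as $(st^{-1}uv^{-1},1)$ and $(uv^{-1}st^{-1},1)$. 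Since these are isomorphic to identity spans, $st^{-1}uv^{-1}=1$ and $uv^{-1}st^{-1}=1$, so $s$ is invertible as well.
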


\begin{proof}
In the bicategory $\mathbf{Span}\left(\mathcal{E},\otimes\right)$
it is easy to see the operation of composing with a span $\left(1,v\right)\colon Y\nrightarrow Z$
defining the functor 
\[
\mathbf{Span}\left(\mathcal{E},\otimes\right)\left(X,Y\right)\to\mathbf{Span}\left(\mathcal{E},\otimes\right)\left(X,Z\right)
\]
is a groupoid fibration. The proof is the same as the usual setting,
using only that identity maps are respected. We also note this corresponds
to a groupoid opfibration when one reverses 2-cells. We construct
the bipullback $\left(1,f\right)$ along any general span $\left(p,g\right)$
by decomposing it as $\left(p,1\right);\left(1,g\right)$, noting
the class $\mathscr{R}$ is respected in both bipullbacks.
\end{proof}

\subsection{Distributivity and composition}

Similar to how subpullbacks satisfy the pasting lemma of Prop. \ref{subpasting},
distributivity subpullbacks also satisfy a number of compositional
properties. Here there are three versions: horizontal pastings, vertical
pastings, and forming cubes. These respectively correspond to pseudofunctoriality
of the dependent subproduct, distributivity isomorphisms (a subcartesian
type theoretic axiom of choice), and the second Beck condition (a
subcartesian Frobenius condition). 

In a plain locally cartesian category one always has the cartesian
version of these isomorphisms. The fact that we have the analogous
isomorphisms in our setting acts as a further sanity check on the
validity of the main structure of this paper. Moreover, these isomorphisms
will be an essential component of the polynomial theory just as in
the cartesian setting \cite{WalkerPoly}.

We now show these properties using the words horizontal and vertical
here in reference to how the diagrams appear in the polynomial calculus,
a different convention from \cite{laxpbcomp}. We will use the labeling
convention of Weber \cite{weber} for the horizontal and cube versions
for easy comparison.
\begin{lem}
[Horizontal distributivity subpullback lemma]\label{hordistsubpasting}
For a locally subcartesian category $\mathcal{E}$, and given diagram
as below where the bottom left square is a distributivity subpullback
and the remaining two squares are subpullbacks,
\[
\xymatrix@=1em{B_{6}\ar[rr]^{h_{9}}\ar[d]_{h_{8}} &  & B_{4}\ar[d]_{h_{5}}\ar[rr]^{h_{6}} &  & B_{5}\ar[ddd]^{h_{7}}\\
B_{2}\myard{h_{3}}{rr}\ar[d]_{h_{2}}\ar@{}[rru]|-{\textnormal{spb}} &  & B_{3}\ar[dd]^{h_{4}}\\
B\ar[d]_{h}\\
X\ar[rr]_{f}\ar@{}[rruu]|-{\textnormal{dspb}} &  & Y\ar[rr]_{g}\ar@{}[rruuu]|-{\textnormal{(d)spb}} &  & Z
}
\]
the rightmost square is a distributivity subpullback if and only if
the outside is one of $h$ along $gf$.
\end{lem}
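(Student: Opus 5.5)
The plan is to work directly with the universal property of Definition \ref{defdspb}. A distributivity subpullback is a terminal object in the category of subpullbacks ``around'' a composable pair, so it suffices to set up a correspondence between subpullbacks around $(h,gf)$ and pairs of subpullbacks, one around $(h,f)$ and one around $(h_{4},g)$. The tools will be the subpullback lemma (Lemma \ref{subpasting}), its refinement Lemma \ref{spb1}, and functoriality of $\nabla$ (Lemma \ref{affps}). Throughout I read the diagram as follows: the bottom-left square exhibits $h_{4}=\boxtimes_{f}(h)$ with counit $h_{2}$; the top-left square is the chosen subpullback $B_{6}\cong B_{2}\otimes_{B_{3}}B_{4}$; and the right square is a subpullback of $h_{7}$ along $g$ whose left leg is $h_{4}h_{5}$.

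\emph{($\Rightarrow$) Right square a dspb implies outside a dspb.} By Lemma \ref{subpasting}, pasting the top-left square onto the bottom-left one and then onto the right square shows that the outside is a subpullback around $h$ and $gf$. Now take any subpullback around $h$ and $gf$, given by $L\to B$, $L\to M$, $M\to Z$.
\begin{enumerate}
\item Form the chosen subpullback $N=Y\otimes_{Z}M$. By Lemma \ref{spb1} there is a unique $L\to N$, and it makes the square $(L;X,N,Y)$ a subpullback around $h$ and $f$.
\item The bottom-left dspb then gives unique maps $N\to B_{3}$ and $L\to B_{2}$.
\item The square $(N;B_{3},M,Z)$ is a subpullback around $h_{4}$ and $g$, so the assumed dspb on the right gives unique maps $M\to B_{5}$ and $N\to B_{4}$.
\item It remains to produce $L\to B_{6}$. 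Applying Lemma \ref{subpasting} to the factorisation of $(L;X,N,Y)$ through $(B_{2};X,B_{3},Y)$ identifies $L\cong B_{2}\otimes_{B_{3}}N$. Then $\nabla_{h_{3}}$ applied to $N\to B_{4}$, regarded as a map over $B_{3}$, gives $L\to B_{2}\otimes_{B_{3}}B_{4}=B_{6}$.
\end{enumerate}
Uniqueness of the resulting diagram follows stage by stage. Each map above was forced by one of the three universal properties, together with uniqueness of fillers into subpullbacks, which also handles the last step.

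\emph{($\Leftarrow$) Outside a dspb implies right square a dspb.} First I would try a terminal-object comparison. Let $D$ be the dspb of $h_{4}$ along $g$; in the intended locally subcartesian closed setting it exists because $g$ is subexponentiable. Paste $D$ with the left column as in ($\Rightarrow$). The forward direction shows the result is again a distributivity subpullback of $h$ along $gf$. Hence it is uniquely isomorphic to the given outside square, compatibly with all legs.

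From that isomorphism I would extract an isomorphism between the right square and $D$. The outside component lands on $B_{5}$, and the $B_{4}$ component is then obtained from the right square being a subpullback, via Lemma \ref{spb1}, exactly as in the proof of Lemma \ref{spb2}. Since $D$ is terminal, so is the right square.

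\emph{Main obstacle.} I expect the converse to be the hard step if one refuses to assume that the dspb of $h_{4}$ along $g$ exists. Arguing directly, one is given a subpullback $(N,M)$ around $(h_{4},g)$. From the outside universal property one obtains $M\to B_{5}$ and $B_{2}\otimes_{B_{3}}N\to B_{6}$. One must then descend the latter to a map $N\to B_{4}$ over $B_{3}$, but $\nabla_{h_{3}}$ need not be faithful or reflect such maps. Existence of $N\to B_{4}$ therefore does not follow from the subpullback property of the right square alone. For this reason I would carry out the proof in the closed setting, where the dspb exists, or equivalently phrase it as the composite of right adjoints $\boxtimes_{g}\boxtimes_{f}\cong\boxtimes_{gf}$ induced by $\nabla_{gf}\cong\nabla_{f}\nabla_{g}$.
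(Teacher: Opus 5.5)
Your forward direction is essentially a correct unpacking of Definition \ref{defdspb}, but the converse is not proved as stated. You assume that the distributivity subpullback of $h_4$ along $g$ exists. The lemma, however, is for an arbitrary locally subcartesian $\mathcal{E}$, where nothing makes $g$ subexponentiable. Moreover, the obstruction you describe is not real. The map $N\to B_4$ should not come from the subpullback property of the right square but from coherence, via the vertical version of Lemma \ref{spb1}, which is the same tool you already use in step (1).

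Here is the direct argument. Given $(n,s,k)$ around $(h_4,g)$, let $L=B_2\otimes_{B_3}N$. Pasting shows that $(L;X,M,Z)$ is a subpullback around $(h,gf)$, so the outside supplies $\psi\colon M\to B_5$ with $h_7\psi=k$ and $\chi\colon L\to B_6$. Since $(N;Y,M,Z)$ is a subpullback of $g$ and $k=h_7\psi$, Lemma \ref{spb1} gives a unique $\phi\colon N\to B_4$ with $h_6\phi=\psi s$ and $h_4h_5\phi=h_4n$. The right square's uniqueness gives $h_9\chi=\phi\,\textnormal{pr}_N$. Hence $(h_5\phi,h_8\chi)$ and $(n,\textnormal{pr}_{B_2})$ are both factorizations of the subpullback $(L;X,N,Y)$ around $(h,f)$ through the bottom-left distributivity subpullback, which forces $h_5\phi=n$. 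For uniqueness, any other factorization $(\psi',\phi')$ produces, via $\nabla_{h_3}(\phi')$, a factorization through the outside. So $\psi'=\psi$, and then $\phi'=\phi$ by the right square.

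Your ``stage by stage'' uniqueness in ($\Rightarrow$) glosses over this same step. A competing factorization through the outside provides only $\psi'$ and a map $L\to B_6$. Before invoking the right-hand universal property, you must manufacture $N\to B_4$ by Lemma \ref{spb1} and check its compatibility over $B_3$ (not merely over $Y$) using uniqueness in the bottom-left square. The same check is missing in your closed-case extraction, since the isomorphism $B_4\cong R_D$ from Lemma \ref{spb1} a priori only commutes with the maps to $Y$.

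The paper sidesteps all of this. It views distributivity subpullbacks as bipullbacks of $(1,f)$ and $(p,1)$ in $\mathbf{Span}(\mathcal{E},\otimes)$ and applies the bipullback pasting lemma, which needs no existence assumptions in either direction. There, the compatibility over $B_3$ is exactly the fully faithful half of the bottom-left bipullback's universal property.
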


\begin{proof}
The outside of the diagram below is a bipullback
\[
\xymatrix@=0.7em{B_{5}\ar@{}[dd]|-{/}\ar[dd]_{\left(1,h_{7}\right)}\ar@{}[rrrr]|-{/}\ar[rrrr]^{\left(h_{6},h_{5}\right)}\ar@{}[rrrrdd]|-{\textnormal{(bpb)}} &  &  &  & B_{3}\ar@{}[dd]|-{/}\ar[dd]^{\left(1,h_{4}\right)}\ar@{}[rrrr]|-{/}\ar[rrrr]^{\left(h_{3},h_{2}\right)}\ar@{}[rrrrdd]|-{\quad\;\textnormal{bpb}} &  &  &  & B\ar@{}[dd]|-{/}\ar[dd]^{\left(1,h\right)}\\
\\Z\ar@{}[rrrr]|-{/}\ar[rrrr]_{\left(g,1\right)} &  &  &  & Y\ar@{}[rrrr]|-{/}\ar[rrrr]_{\left(f,1\right)} &  &  &  & X
}
\]
if and only if the left square is.
\end{proof}

We now give the vertical version of this composition, where the reader
will note the need for the rotated square to be a pullback. Indeed,
this is one of the main reasons for needing pullbacks in our theory.
\begin{lem}
[Vertical distributivity subpullback lemma]\label{vertdistsubpasting}
For a locally subcartesian category $\mathcal{E}$, and given diagram
as on the left below where the indicated left square is a pullback,
the bottom square is a distributivity subpullback and the top square
is a subpullback
\[
\xymatrix@=1em{ & X\ar[d]_{u}\ar[rr]^{p''} &  & T\ar[dd]^{v}\\
 & B\ar[dr]_{h'}\ar[dl]_{e'}\\
R\ar[rd]_{h}\ar@{}[rr]|-{\textnormal{pb}\;\;\;\;} & \ar@{}[rruu]|-{\textnormal{\;\;(d)spb}} & S\ar[dl]_{e}\ar[r]^{p'} & D\ar[dd]^{d}\\
 & A\ar[d]_{f}\\
 & E\ar[rr]_{p}\ar@{}[rruu]|-{\textnormal{\;\;dspb}} &  & B
}
\]
the top square is a distributivity subpullback if and only if the
outside diagram is one of $fh$ along $p$.
\end{lem}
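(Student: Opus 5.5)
The cleanest route mirrors the proof of Lemma \ref{hordistsubpasting}: translate everything into bipullbacks in $\mathbf{Span}\left(\mathcal{E},\otimes\right)^{\textnormal{co}}$ and use the two bipullback lemmas of the previous subsection together with the bipullback pasting lemma. By the second of those lemmas, the bottom distributivity subpullback of $f$ along $p$ is exactly the bipullback of $\left(1,f\right)$ and $\left(p,1\right)$, with apex $D$ and legs $\left(1,d\right)$ and $\left(p',e\right)$. The span $\left(p',e\right)$ decomposes (strict functorial decomposability) as $\left(p',1\right);\left(1,e\right)$. The vertical statement then concerns the bipullback of $\left(1,fh\right)=\left(1,h\right);\left(1,f\right)$ along $\left(p,1\right)$, so I would compute it in two stages.

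First, pull back $\left(1,f\right)$ along $\left(p,1\right)$ to get $D$ with the leg $\left(p',1\right);\left(1,e\right)$. Next, form the bipullback of $\left(1,h\right)$ along this leg. Following the proof of Corollary \ref{chosenproto}, I split it into two stages. The bipullback of $\left(1,h\right)$ along $\left(1,e\right)$ is computed, by the first bipullback lemma, as the ordinary pullback of $h$ and $e$ in $\mathcal{E}$. This is precisely where the hypothesis that the left rotated square $R,A,S,B$ is a pullback is used, and it produces the leg $\left(1,h'\right)$ into $S$. The bipullback of $\left(1,h'\right)$ along $\left(p',1\right)$ is, by the second lemma, exactly a distributivity subpullback of $h'$ along $p'$, which is the top square.

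With these in hand, bipullback pasting applies to the resulting $2\times2$ arrangement of squares (two composable squares stacked on the bottom bipullback). The pasting lemma gives that the composite is a bipullback of $\left(1,fh\right)$ along $\left(p,1\right)$ if and only if the top piece is. Reading off the composite span data gives apex $T$, leg $\left(1,dv\right)$, and top span $\left(p'',e\cdot e'\cdot u\right)$, possibly after inserting the subpullback coherence isomorphisms. Translating back through the second lemma then says the outside diagram is a distributivity subpullback of $fh$ along $p$ exactly when the top square is one.

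The main obstacle is bookkeeping the composite spans. I need to check that composing $\left(p'',u\right)$ with $\left(1,e'\right)$ and then $\left(p',e\right)$ in $\mathbf{Span}\left(\mathcal{E},\otimes\right)$ really yields the outer span. Composites with identity-leg spans are strict, but the composite $\left(p'',u\right);\left(p',1\right)$ involves a chosen subpullback. Lemma \ref{spb1} and Lemma \ref{spb2} should make the coherence isomorphism canonical, and I also need that the subpullback required in the definition of the outer distributivity subpullback agrees with this canonical composite. A second check is that the "if" direction of bipullback pasting holds here: the bottom square is a genuine bipullback, so the standard two-out-of-three pasting argument in a bicategory applies.
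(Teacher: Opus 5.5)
Your proposal is correct and is essentially the paper's proof: the paper places the top square, the pullback square (a bipullback by the first span lemma) and the bottom distributivity subpullback (a bipullback by the second) in $\mathbf{Span}\left(\mathcal{E},\otimes\right)$, then applies bipullback pasting twice (outside $\Leftrightarrow$ top rectangle $\Leftrightarrow$ top left square), exactly as you do. There is one slip in your bookkeeping: the outer top edge is $\left(p'',u\right);\left(1,e'\right)=\left(p'',e'u\right)$ into $R$, which is strict by the nullary axiom, not $\left(p'',e\cdot e'\cdot u\right)$, which does not typecheck; also $\left(p',1\right);\left(1,e\right)$ is an interior edge, so it is never composed with $\left(p'',u\right)$, and the subpullback-dependent composite you meant is $\left(1,v\right);\left(p',1\right)\cong\left(p'',h'u\right)$, witnessed by the given top subpullback.
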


\begin{proof}
The outside of the diagram below is a bipullback
\[
\xymatrix@=0.7em{T\ar@{}[rrrr]|-{/}\ar[rrrr]^{\left(p'',u\right)}\ar@{}[dd]|-{/}\ar[dd]_{\left(1,v\right)}\ar@{}[rrrrdd]|-{\textnormal{(bpb)}} &  &  &  & B\ar@{}[dd]|-{/}\ar[dd]^{\left(1,h'\right)}\ar@{}[rrrr]|-{/}\ar[rrrr]^{\left(1,e'\right)}\ar@{}[rrrrdd]|-{\quad\;\textnormal{bpb}} &  &  &  & R\ar@{}[dd]|-{/}\ar[dd]^{\left(1,h\right)}\\
\\D\ar@{}[dd]|-{/}\ar[dd]_{\left(1,d\right)}\ar@{}[rrrr]|-{/}\ar[rrrr]_{\left(p',1\right)}\ar@{}[rrrrrrrdd]|-{\textnormal{bpb}} &  &  &  & S\ar@{}[rrrr]|-{/}\ar[rrrr]_{\left(1,e\right)} &  &  &  & A\ar@{}[dd]|-{/}\ar[dd]^{\left(1,f\right)}\\
\\B\ar@{}[rrrrrrrr]|-{/}\ar[rrrrrrrr]_{\left(p,1\right)} &  &  &  &  &  &  &  & E
}
\]
if and only if the top rectangle and equivalently top left square
is.
\end{proof}

This allows us to state the subcartesian type theoretic axiom of choice.
We warn the reader that this requires pullbacks since the above does.
\begin{cor}
[$\Sigma$-$\boxtimes$-$\Delta$ distributivity comparison]\label{beck2}
For a locally subcartesian closed category $\mathcal{E}$, and subpullback
square as on the left below
\[
\xymatrix@=0.7em{S\ar[rr]^{p'}\ar[d]_{e} &  & D\ar[dd]^{d} &  &  &  &  &  & \mathcal{E}/S\dtwocell[0.5]{rddr}{\mathfrak{d}}\myar{\boxtimes_{p'}}{rr} &  & \mathcal{E}/D\ar[dd]^{\Sigma_{d}}\\
A\ar[d]_{f} &  &  &  &  &  &  &  & \mathcal{E}/A\ar[d]_{\Sigma_{f}}\ar[u]^{\Delta_{e}}\\
E\ar[rr]_{p} &  & B &  &  &  &  &  & \mathcal{E}/E\myard{\boxtimes_{p}}{rr} &  & \mathcal{E}/B
}
\]
there is a canonical natural transformation $\mathfrak{d}$ as on
the right above. Moreover, this transformation is invertible if and
only if the left square is a distributivity subpullback of $f$ along
$p$.
\end{cor}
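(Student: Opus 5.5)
We first build $\mathfrak{d}\colon\Sigma_{d}\boxtimes_{p'}\Delta_{e}\Rightarrow\boxtimes_{p}\Sigma_{f}$ by transposing along $\nabla_{p}\dashv\boxtimes_{p}$. Such a transformation corresponds to a transformation $\nabla_{p}\Sigma_{d}\boxtimes_{p'}\Delta_{e}\Rightarrow\Sigma_{f}$. Since the given square is a subpullback, Proposition \ref{beck1} supplies the invertible Beck data $\nabla_{p}\Sigma_{d}\cong\Sigma_{fe}\nabla_{p'}$. Composing with this, it suffices to give
\[
\Sigma_{f}\Sigma_{e}\nabla_{p'}\boxtimes_{p'}\Delta_{e}\Rightarrow\Sigma_{f}\Sigma_{e}\Delta_{e}\Rightarrow\Sigma_{f}.
\]
The first arrow is the counit of $\nabla_{p'}\dashv\boxtimes_{p'}$ and the second is the counit of $\Sigma_{e}\dashv\Delta_{e}$. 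Naturality is immediate. Coherence with pastings follows from uniqueness of mediating morphisms, exactly as for the Beck data.

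For the equivalence we compute components in terms of squares. Take $h\colon H\to A$ and form the pullback $e'\colon R\to S$, $h'\colon R\to H$ of $h$ along $e$, so that $\Delta_{e}(h)$ has domain $R$. Since $p'$ is subexponentiable, $\boxtimes_{p'}$ exists, and by the proposition characterising distributivity subpullbacks via $\boxtimes$, the map $\boxtimes_{p'}(e')$ sits in a distributivity subpullback of $e'$ along $p'$. Stacking this on the given square produces a diagram of exactly the shape in Lemma \ref{vertdistsubpasting}, with the pullback square in the required rotated position. The component $\mathfrak{d}_{h}$ is then the unique comparison from $\Sigma_{d}\boxtimes_{p'}(e')$ into the terminal distributivity subpullback $\boxtimes_{p}(fh)$ of $fh$ along $p$. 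This comparison exists because the outer diagram is a subpullback around $fh$ and $p$: we compose the subpullbacks and use Lemma \ref{subpasting}. One must check that this comparison agrees with the transpose defined above; this is routine by the universal property of the counit $\epsilon_{fh}$.

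Suppose first that the given square is a distributivity subpullback of $f$ along $p$. Then Lemma \ref{vertdistsubpasting} says the outer diagram is a distributivity subpullback of $fh$ along $p$. Its comparison to the chosen one is therefore an isomorphism, so every $\mathfrak{d}_{h}$ is invertible.

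Conversely, suppose $\mathfrak{d}$ is invertible and evaluate at $h=\textnormal{id}_{A}$. Then $\Delta_{e}(\textnormal{id}_{A})=\textnormal{id}_{S}$, and with the convention $\boxtimes_{p'}(\textnormal{id}_{S})=\textnormal{id}_{D}$ the left side is $d$ itself. So $\mathfrak{d}_{\textnormal{id}}$ is an isomorphism $d\cong\boxtimes_{p}(f)$ over $B$, and it is compatible with the counit maps. This identifies the square with the chosen distributivity subpullback of $f$ along $p$.

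The main obstacle is this last compatibility. We must show that the isomorphism $d\cong\boxtimes_{p}f$ also carries $S$ and $e$ to $E\otimes_{B}D$ and $\epsilon_{f}$. We would first try to obtain this by whiskering $\mathfrak{d}$ with $\nabla_{p}$ and using Beck invertibility. Failing that, we would use the subpullback property to get uniqueness of the induced map on apexes and then argue as in Lemma \ref{spb1}.
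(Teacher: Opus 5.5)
Your proposal is correct and follows the paper's proof. The component $\mathfrak{d}_{h}$ is the induced comparison from the stacked pullback, distributivity subpullback and subpullback diagram into the distributivity subpullback of $fh$ along $p$; invertibility follows from Lemma \ref{vertdistsubpasting}; and the converse is the case $h=\textnormal{id}_{A}$. (Your mate-style definition via $\nabla_{p}\dashv\boxtimes_{p}$ and the Beck isomorphism is an equivalent repackaging that the paper skips.) The compatibility you flag at the end is not a real obstacle, and your fallback handles it: if $\mathfrak{d}_{\textnormal{id}_{A}}$ is invertible, then the square with legs $fe$ and $\mathfrak{d}_{\textnormal{id}_{A}}p'$ is itself a subpullback of $p$ and $\boxtimes_{p}f$. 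The apex component of the comparison must therefore agree with the canonical isomorphism onto the apex of the chosen distributivity subpullback, by uniqueness of mediating maps.
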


\begin{proof}
For a general $h\colon R\to A$ we may form the pullback $\Delta_{e}\left(h\right)=h'$
and apply $\boxtimes_{p'}$ which is equivalent to placing a distributivity
subpullback on top. Clearly the other process of composing $h$ with
$f$ and applying $\boxtimes_{p}$ results in the distributivity subpullback
of $fh$ along $p$, and so the earlier construction has an induced
map $\mathfrak{d}_{h}$ into it. If the starting left square is a
distributivity subpullback then this map is invertible by Lemma \ref{vertdistsubpasting},
and the converse is recovered by taking $h$ to be the identity.
\end{proof}

We now state the subcartesian version of the cube lemma, which will
also require both pullbacks and subpullbacks. 
\begin{lem}
[Cube distributivity subpullback lemma]\label{cubedistsubpasting}
For a locally subcartesian closed category $\mathcal{E}$, and given
cube as below where the front is a distributivity subpullback, the
rear is a subpullback, the bottom and top are subpullbacks, and the
lower left square is a pullback
\[
\xymatrix@=1em{ & A_{1}\ar[rrr]^{f_{1}}\ar[dl]_{h_{1}}\ar@{..>}[d]^{d_{1}} &  &  & B_{1}\ar[dd]^{d_{5}}\ar[dl]^{k_{1}}\\
C_{1}\ar[d]_{d_{3}}\ar@/^{0.5pc}/[rrr]^{\quad\quad g_{1}}\ar@{}[r]|-{} & A_{3}\ar@{..>}[ld]\sb(.35){h_{3}}\ar@{..>}[d]^{d_{2}} &  & D_{1}\ar@/_{0pc}/[dd]\sb(.7){d_{6}}\ar@{}[rd]|-{\textnormal{}}\\
C_{3}\ar[d]_{d_{4}} & A_{2}\ar@{..>}[ld]\sb(.35){h_{2}}\ar@{..>}[rrr]^{f_{2}\quad\quad} &  &  & B_{2}\ar[dl]^{k_{2}}\\
C_{2}\ar[rrr]_{g_{2}} &  &  & D_{2}
}
\]
the rear face is a distributivity subpullback if and only if the right
face is a pullback. 
\end{lem}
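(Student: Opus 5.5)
The plan is to follow the pattern of the horizontal and vertical lemmas (Lemmas \ref{hordistsubpasting} and \ref{vertdistsubpasting}) and translate the cube into bipullbacks in $\mathbf{Span}\left(\mathcal{E},\otimes\right)$. There we can use the bipullback pasting lemma, with the protocalibration of Corollary \ref{chosenproto} providing the needed stability. We read the front face (with vertical composite $d_{4}d_{3}$ on the left and $d_{6}$ on the right) as the distributivity subpullback of $d_{4}d_{3}$ along $g_{2}$. By the earlier lemma on calibrating spans, this is the bipullback of $\left(1,d_{4}d_{3}\right)$ and $\left(g_{2},1\right)$, with apex span $\left(g_{1},d_{3}\right)$ and vertical leg $\left(1,d_{6}\right)$. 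Likewise the rear face, once the dotted arrows are filled in, is a candidate distributivity square of $d_{2}d_{1}$ along $f_{2}$, and we must show it is one exactly when the right face $B_{1}\to D_{1}\to D_{2}\leftarrow B_{2}$ is a pullback.

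First I would construct the dotted arrows. Let $A_{2}$ be the chosen subpullback of $g_{2}$ along $k_{2}$, which is the bottom face. Let $A_{3}$ be the pullback of $d_{3}$ along $h_{2}$, which is the lower-left pullback square. Then $d_{1}$ comes from Lemma \ref{spb1} applied to the top face together with the pullback property, and commutativity of the cube follows from the subpullback property of the bottom face. Next I would paste squares of spans in two ways. Going via the front and bottom, the cospan $\left(1,d_{4}d_{3}\right)$, $\left(g_{2},1\right)$ is precomposed with $\left(1,k_{2}\right)$. The bottom face is a subpullback, so by the first bipullback lemma and the composition law in $\mathbf{Span}\left(\mathcal{E},\otimes\right)$ we get $\left(g_{2},1\right);\left(1,k_{2}\right)$-type rewritings $\left(1,k_{2}\right);\left(g_{2},1\right)\cong\left(f_{2},h_{2}\right)\cong\left(f_{2},1\right);\left(1,h_{2}\right)$. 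Going via the rear, left and top faces, the left face is the pullback square, so $\left(1,h_{2}\right)$ pulled back along $\left(1,d_{4}d_{3}\right)$ is $\left(1,h_{3}\right)$ over $\left(1,d_{2}d_{1}\right)$, by the first bipullback lemma together with the pullback of $d_{4}$ along $h_{2}$ (the lower square). The top face being a subpullback identifies the composite top span $\left(f_{1},h_{1}\right)$ with the composite of the rear and left data.

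Both routes compute the same square of spans from $B_{1}$ to $C_{2}$. By bipullback pasting, the composite of the front bipullback with the pullback of $\left(1,k_{2}\right)$ is a bipullback. Splitting this composite the other way, the rear square is a bipullback if and only if the complementary square, the right face seen as $\left(1,k_{1}\right)$, $\left(1,d_{6}\right)$, $\left(1,d_{5}\right)$, $\left(1,k_{2}\right)$, is a bipullback. The first bipullback lemma identifies that with the right face being a pullback in $\mathcal{E}$, since the embedding $f\mapsto\left(1,f\right)$ reflects pullbacks among spans of this form (composition of spans $\left(1,-\right)$ uses no subpullback structure).

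I expect the main obstacle to be the bookkeeping in the middle step. The three decompositions involve pullbacks (left face, lower left square) and subpullbacks (top, bottom, rear) simultaneously, so one has to check that the span composites genuinely agree up to the coherent isomorphisms of Lemma \ref{spb2}. One also has to check that reflection of bipullbacks along $\left(1,-\right)$ holds in the needed direction. If this becomes unwieldy, a fallback is a direct argument: compare the mediating map from the rear square into the distributivity subpullback of $d_{2}d_{1}$ along $f_{2}$ with the comparison $\Delta_{k_{2}}\boxtimes_{g_{2}}\Rightarrow\boxtimes_{f_{2}}\Delta_{h_{2}}$-style map, using Proposition \ref{beck1} and Corollary \ref{beck2}, and show it is invertible exactly when $B_{1}\to B_{2}\times_{D_{2}}D_{1}$ is.
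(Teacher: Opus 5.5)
Your overall strategy matches the paper's proof. You express one outer square of spans in $\mathbf{Span}(\mathcal{E},\otimes)$ as a pasting in two ways, then apply bipullback pasting and the two span-bipullback lemmas. However, you have misread which morphism is being distributed, and as written your central bipullback claims are false. In Definition \ref{defdspb} the leg $e$ is not part of the distributed map. The front face is the distributivity subpullback of $d_4$ (not $d_4d_3$) along $g_2$, with $d_3$ in the role of $e$. So it is the bipullback of $(1,d_4)\colon C_3\nrightarrow C_2$ and $(g_2,1)$, with legs $(g_1,d_3)$ and $(1,d_6)$; your own apex span $(g_1,d_3)$ only type-checks against $(1,d_4)$. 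Likewise the rear face is a candidate distributivity subpullback of $d_2$ along $f_2$ with $e=d_1$, i.e.\ the square of spans with legs $(f_1,d_1)$, $(1,d_5)$ over $(1,d_2)$, $(f_2,1)$. Also $A_3$ is the pullback of $d_4$ (not $d_3$) along $h_2$. Only the lower half of the left face is a pullback; the upper square $A_1,C_1,A_3,C_3$ merely commutes. Read literally, your decomposition fails in two places. The bipullback of $(1,d_4d_3)$ and $(g_2,1)$ is computed by $\boxtimes_{g_2}(d_4d_3)=\boxtimes_{g_2}\nabla_{g_2}\boxtimes_{g_2}(d_4)$, which is not $d_6=\boxtimes_{g_2}(d_4)$ in general; this fails already in $\mathbf{Set}$ with pullbacks. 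And the bipullback of $(1,h_2)$ and $(1,d_4d_3)$ is $C_1\times_{C_2}A_2$, not $A_3$.

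The repair is exactly the paper's proof. Take the outer square with legs $(1,d_5)\colon B_1\nrightarrow B_2$ and $(f_1,d_3h_1)\colon B_1\nrightarrow C_3$ over the cospan $(1,d_4)$ and $(f_2,h_2)$. Here $(1,k_2);(g_2,1)\cong(f_2,h_2)\cong(f_2,1);(1,h_2)$ by the bottom face.

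First decomposition: by the top face, $(1,k_1);(g_1,d_3)\cong(f_1,d_3h_1)$. So the outer square is the right face, viewed as the square $(1,k_1),(1,d_5),(1,d_6),(1,k_2)$, pasted with the front bipullback.

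Second decomposition: since $h_3d_1=d_3h_1$, the outer square is also the rear square pasted with the lower left square $(1,h_3),(1,d_2),(1,d_4),(1,h_2)$. That square is a bipullback by the first span lemma, because $A_3=C_3\times_{C_2}A_2$.

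Two separate applications of pasting then show that the outer square is a bipullback iff the right face is, and iff the rear square is. In your final step, the square complementary to the rear is therefore this pullback square, not the right face. The bipullback with apex $B_2\times_{D_2}D_1$ plays no role. The converse directions of the two span lemmas, which you rightly flag, are also used tacitly in the paper. Finally, the dotted arrows are part of the given cube, so your construction of them is unnecessary.
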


\begin{proof}
Considering the unlabeled regions to be just commuting, the outside
of the two diagrams (which are equal) below is a bipullback precisely
when
\[
\xymatrix@=0.7em{ &  &  & A_{1}\ar@{}[rrd]|-{/}\ar[rrd]\sb(.25){\left(1,h_{1}\right)}\ar@{}[rr]|-{/}\ar[rr]^{\left(1,d_{1}\right)} &  & A_{3}\ar@{}[rrd]|-{/}\ar[rrd]^{\left(1,h_{3}\right)} &  &  &  &  &  & A_{1}\ar@{}[rr]|-{/}\ar[rr]^{\left(1,d_{1}\right)} &  & A_{3}\ar@{}[dd]|-{/}\ar[dd]^{\left(1,d_{2}\right)}\ar@{}[rrdr]|-{/}\ar[rrdr]^{\left(1,h_{3}\right)}\\
B_{1}\ar@{}[rrr]|-{/}\ar[rrr]_{\left(1,k_{1}\right)}\ar@{}[dd]|-{/}\ar[dd]_{\left(1,d_{5}\right)}\ar@{}[rrur]|-{/}\ar[urrr]^{\left(f_{1},1\right)}\ar@{}[rrrdd]|-{\textnormal{(bpb)}} &  &  & D_{1}\ar@{}[rr]|-{/}\ar[rr]_{\left(g_{1},1\right)}\ar@{}[dd]|-{/}\ar[dd]^{\left(1,d_{6}\right)}\ar@{}[rrrrdd]|-{\quad\textnormal{bpb}} &  & C_{1}\ar@{}[rr]|-{/}\ar[rr]_{\left(1,d_{3}\right)} &  & C_{3}\ar@{}[dd]|-{/}\ar[dd]^{\left(1,d_{4}\right)} &  & B_{1}\ar@{}[rur]|-{/}\ar[rru]^{\left(f_{1},1\right)}\ar@{}[dd]|-{/}\ar[dd]^{\left(1,d_{5}\right)}\ar@{}[rrrr]|-{\textnormal{(bpb)}} &  &  &  & \;\ar@{}[rrdr]|-{\quad\textnormal{bpb}} &  &  & C_{3}\ar@{}[dd]|-{/}\ar[dd]^{\left(1,d_{4}\right)}\\
 &  &  &  &  &  &  &  &  &  &  &  &  & A_{2}\ar@{}[rdrr]|-{/}\ar[drrr]^{\left(1,h_{2}\right)} &  &  & \;\\
B_{2}\ar@{}[rrr]|-{/}\ar[rrr]_{\left(1,k_{2}\right)} &  &  & D_{2}\ar@{}[rrrr]|-{/}\ar[rrrr]_{\left(g_{2},1\right)} &  &  &  & C_{2} &  & B_{2}\ar@{}[rrrru]|-{/}\ar[rrrru]^{\left(f_{2},1\right)}\ar@{}[rrr]|-{/}\ar[rrr]_{\left(1,k_{2}\right)} &  &  & D_{2}\ar@{}[rrr]|-{/}\ar[rrrr]_{\left(g_{2},1\right)} &  &  &  & C_{2}
}
\]
the left square of either of the diagrams above is.
\end{proof}

We can now give the canonical morphisms describing the interaction
of pullback and the dependent subproduct.
\begin{cor}
[$\Delta$-$\boxtimes$ Beck comparison]\label{beck3} For a locally
subcartesian closed category $\mathcal{E}$, a commuting square which
factors through the chosen subpullback as on the left below gives
rise to a natural transformation $\mathfrak{t}$ as on the right below
\[
\xymatrix@=0.7em{C\ar[rr]\ar[dd]_{g'}\ar[rr]^{f'} &  & B\ar[dd]^{g} &  &  &  &  &  & \mathcal{E}/C\ar[rr]^{\boxtimes_{f'}}\utwocell[0.5]{rdrd}{\mathfrak{t}} &  & \mathcal{E}/B\\
\\A\ar[rr]_{f} &  & T &  &  &  &  &  & \mathcal{E}/A\ar[rr]_{\boxtimes_{f}}\ar[uu]^{\Delta_{g'}} &  & \mathcal{E}/T\ar[uu]_{\Delta_{g}}
}
\]
and this transformation is invertible if and only if the left square
is a subpullback.
\end{cor}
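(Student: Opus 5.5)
The plan is to construct $\mathfrak{t}$ as the mate of the $\Sigma$-$\nabla$ Beck data of Prop. \ref{beck1}, transported through the adjunctions $\Sigma\dashv\Delta$ and $\nabla\dashv\boxtimes$, and then to read off invertibility from the cube lemma (Lemma \ref{cubedistsubpasting}). The given square factors through the chosen subpullback of $f$ and $g$, so Prop. \ref{beck1} applied to it (with the roles arranged so the horizontal maps are $f,f'$) gives
\[
\mathfrak{b}\colon\Sigma_{g'}\nabla_{f'}\Rightarrow\nabla_{f}\Sigma_{g}.
\]
We have $\Sigma_{g}\dashv\Delta_{g}$, $\Sigma_{g'}\dashv\Delta_{g'}$, $\nabla_{f}\dashv\boxtimes_{f}$ and $\nabla_{f'}\dashv\boxtimes_{f'}$, the last because subexponentiable maps are subpullback stable (and $f'$ is a subpullback of $f$ when the square is one; in general $\mathcal{E}$ is assumed locally subcartesian closed, so all $\boxtimes$ exist). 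The mate of $\mathfrak{b}$ is then a transformation $\mathfrak{t}\colon\Delta_{g}\boxtimes_{f}\Rightarrow\boxtimes_{f'}\Delta_{g'}$. The standard mate correspondence for adjunctions gives this bijectively, and naturality and coherence follow from that of $\mathfrak{b}$.

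Next I would give $\mathfrak{t}$ concretely. For $h\colon A_{2}\to A$, form the distributivity subpullback of $h$ along $f$, which gives $\boxtimes_{f}(h)$. Pull this back along $g$ to get $\Delta_{g}\boxtimes_{f}(h)$. Then take the subpullback of that along $f'$ to obtain the top face of a cube as in Lemma \ref{cubedistsubpasting}. The front face is the dspb, the bottom face is the original square, the right face is a pullback, and the lower left square is the pullback $\Delta_{g'}(h)$. The mediating map into the distributivity subpullback of $\Delta_{g'}(h)$ along $f'$, whose universal property is Definition \ref{defdspb}, is exactly the component $\mathfrak{t}_{h}$. I would check it agrees with the mate by unwinding counits, which reduces to uniqueness of mediating maps into subpullbacks.

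For the ``if'' direction, assume the original square is a subpullback. Then all hypotheses of Lemma \ref{cubedistsubpasting} hold, including the bottom face being a subpullback. The right face is a pullback by construction, so the lemma says the rear face is a distributivity subpullback. Hence $\mathfrak{t}_{h}$ is an isomorphism. Alternatively, $\mathfrak{b}$ is invertible by Prop. \ref{beck1}, and the mate of an isomorphism whose inverse is also a mate is invertible; but a naive mate argument does not immediately yield invertibility, so I would rely on the cube lemma.

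For the converse, take $h=\textnormal{id}_{A}$. Then $\boxtimes_{f}(\textnormal{id})=\textnormal{id}_{T}$, so $\Delta_{g}\boxtimes_{f}(\textnormal{id})=\textnormal{id}_{B}$. Meanwhile $\boxtimes_{f'}\Delta_{g'}(\textnormal{id})=\boxtimes_{f'}(\textnormal{id}_{C})=\textnormal{id}_{B}$, so here $\mathfrak{t}$ carries no information directly. I therefore expect the converse to be the main obstacle. I would instead test $\mathfrak{t}$ at $h$ the identity of $A$ viewed so that the rear face becomes the given square $(f',g')$ itself, compared with the chosen subpullback. More precisely, I would use that invertibility of $\mathfrak{t}$ forces invertibility of its mate $\mathfrak{b}$ whenever $\mathfrak{t}$ is invertible naturally. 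This holds because mates of natural isomorphisms between right adjoints composed as here correspond under the doctrinal adjunction bijection. Then I would invoke the ``only if'' part of Prop. \ref{beck1}: $\mathfrak{b}$ invertible implies the square is a subpullback. If doctrinal adjunction does not apply directly, the fallback is to evaluate $\mathfrak{t}$ at $h=g\circ$ (the comparison $\theta\colon C\to C_{s}$ into the chosen subpullback) and use the cube lemma to show $\theta$ is invertible.
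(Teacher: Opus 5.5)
Your argument is correct, and its skeleton is the paper's. You define $\mathfrak{t}$ as the mate of the Beck data $\mathfrak{b}\colon\Sigma_{g'}\nabla_{f'}\Rightarrow\nabla_{f}\Sigma_{g}$ of Prop.~\ref{beck1}, and your converse goes through invertibility of $\mathfrak{b}$ and the ``only if'' half of that proposition.

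You diverge only in the ``if'' direction, which you route through Lemma~\ref{cubedistsubpasting}. The paper mentions that route only as a more involved alternative. It does work: in the subpullback case, the vertical form of Lemma~\ref{spb1} together with the pullback $\Delta_{g'}(h)$ supplies the missing rear edge of the cube, and also makes the rear rectangle a subpullback. But it costs you an extra verification, namely that the cube-induced comparison coincides with the mate, and you only sketch this as ``unwinding counits''.

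The reason you give for this detour is unfounded. Here $\mathfrak{b}$ and $\mathfrak{t}$ are conjugate with respect to the two composite adjunctions $\Sigma_{g'}\nabla_{f'}\dashv\boxtimes_{f'}\Delta_{g'}$ and $\nabla_{f}\Sigma_{g}\dashv\Delta_{g}\boxtimes_{f}$, which run between the same pair of slices. The conjugate correspondence is a bijection compatible with identities and vertical composition, so it preserves and reflects invertibility. That is exactly the fact you invoke for the converse; it is plain functoriality of conjugation, not doctrinal adjunction. Invertibility can only be lost when you mate along a single pair of sides of a square, e.g.\ along $\Sigma\dashv\Delta$ alone.

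So both directions collapse to the single chain ``$\mathfrak{t}$ invertible iff $\mathfrak{b}$ invertible iff the square is a subpullback'', which is the paper's proof. Your remark that evaluating at $h=\textnormal{id}_{A}$ is uninformative (both sides are $\textnormal{id}_{B}$) is correct. It explains why the identity trick of Prop.~\ref{beck1} and Cor.~\ref{beck2} is not used here. The fallback with ``$h=g\circ\theta$'' is ill-typed and can simply be dropped.
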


\begin{proof}
Note that the canonical transformation $\mathfrak{t}$ corresponds
to a transformation of left adjoints $\mathfrak{\mathfrak{b}}\colon\Sigma_{g'}\nabla_{f'}\Rightarrow\nabla_{f}\Sigma_{g}$
given by Prop. \ref{beck1}. Moreover, this $\mathfrak{t}$ is invertible
if and only if $\mathfrak{b}$ is, which happens if and only if the
original square is a subpullback by Prop. \ref{beck1}. To explain
why this  also follows from Lemma \ref{cubedistsubpasting}, note
there is a more involved proof where $\mathfrak{t}$ is directly constructed
via a cube construction, and this gives the same transformation $\mathfrak{t}$
as the above application of mates. This is explained by Weber \cite[Lemma 3.2.3]{weber}
in the cartesian setting.
\end{proof}

\subsection{Indexed closed monoidal structure}

Recalling that monoidal product on each slice is given by $\left(-\right)\otimes g=\Sigma_{g}\nabla_{g}\left(-\right)$,
we can now give the expected characterization of local subcartesian
closure in terms of adjointness, also applying this to the indexed
monoidal category viewpoint. This section is fairly short as we have
already developed all of the necessary theory. It is clear these facts
apply in both the semicartesian and subcartesian settings.
\begin{prop}
\label{affright} Suppose $\mathcal{E}$ is a locally subcartesian
category with pullbacks. This structure is locally subcartesian closed
if and only if each affine base change $\nabla_{p}$ has a right adjoint
$\boxtimes_{p}$.
\end{prop}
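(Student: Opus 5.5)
The plan is to reduce both directions to the adjoint triangle theorem (Prop. \ref{adjtri}), applied to the factorization $(-)\otimes_{X}g=\Sigma_{g}\nabla_{g}(-)$ with $P=\Sigma_{g}$ and $Q=\Delta_{g}$. Throughout, $\mathcal{E}$ has pullbacks, so each $\Sigma_{g}\colon\mathcal{E}/G\to\mathcal{E}/X$ has the right adjoint $\Delta_{g}$.

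For the \emph{if} direction, suppose each $\nabla_{p}$ has a right adjoint $\boxtimes_{p}$. Fix $g\colon G\to X$ in $\mathcal{E}/X$. Then $(-)\otimes_{X}g=\Sigma_{g}\nabla_{g}$ has the right adjoint $\boxtimes_{g}\Delta_{g}$, since adjoints compose. Hence each slice is right closed, with internal hom $[g,h]_{X}=\boxtimes_{g}\Delta_{g}(h)$. This is immediate and needs no further argument.

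For the \emph{only if} direction, suppose each functor $(-)\otimes_{X}g=\Sigma_{g}\nabla_{g}$ has a right adjoint. Given $p\colon E\to B$, take $F=\nabla_{p}\colon\mathcal{E}/B\to\mathcal{E}/E$ and $P=\Sigma_{p}\colon\mathcal{E}/E\to\mathcal{E}/B$, with right adjoint $Q=\Delta_{p}$. Then $PF=\Sigma_{p}\nabla_{p}=(-)\otimes_{B}p$ has a right adjoint by hypothesis. Three hypotheses of Prop. \ref{adjtri} remain to be checked.
\begin{enumerate}
\item The unit components of $\Sigma_{p}\dashv\Delta_{p}$ are regular monomorphisms in $\mathcal{E}/E$. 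The unit at $a\colon A\to E$ is the map $A\to E\times_{B}A$ given by $(a,1)$. This is the equalizer of the two maps $E\times_{B}A\rightrightarrows E$, namely the first projection and $a$ composed with the second projection. The equalizer exists in $\mathcal{E}/E$ because it is computed as a pullback in $\mathcal{E}$ (a diagonal-type pullback), so the component is regular monic. This is the same observation already used in the proof that subexponentiable maps are subpullback stable.
\item The domain $\mathcal{E}/B$ must have equalizers of coreflexive pairs. Since $\mathcal{E}$ has pullbacks, $\mathcal{E}/B$ has all finite limits, and the equalizer of $u,v\colon C\rightrightarrows D$ is obtained as the pullback of $(u,v)\colon C\to D\times_{B}D$ along the diagonal.
\item The adjoint triangle theorem then gives a right adjoint to $\nabla_{p}$, which we call $\boxtimes_{p}$.
\end{enumerate}

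The only step needing care is the first: verifying that the unit of $\Sigma_{p}\dashv\Delta_{p}$ is a regular mono. It is routine, since it reduces to the unit being a split monomorphism (split by the second projection) in a category with pullbacks, and split monomorphisms are regular. Finally, the symmetric and semicartesian variants follow by the same argument, because the proof never uses the uniqueness property of subpullbacks.
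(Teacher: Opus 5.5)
Your argument is the paper's own. The paper proves this in one line by noting $(-)\otimes_{B}p=\Sigma_{p}\nabla_{p}$ and applying Prop.~\ref{adjtri} with $P=\Sigma_{p}\dashv Q=\Delta_{p}$ and $F=\nabla_{p}$. Your easy direction (composing adjoints, which gives $[g,h]_{X}\cong\boxtimes_{g}\Delta_{g}(h)$) matches the remark that follows the proposition, and your check that $\mathcal{E}/B$ has equalizers of coreflexive pairs is fine.

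The step that is wrong as written is your check that the unit $\eta_{a}=(a,1)\colon A\to E\times_{B}A$ is a regular monomorphism \emph{in $\mathcal{E}/E$}, which is what Prop.~\ref{adjtri} requires.

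\begin{itemize}
\item The pair $\pi_{1},a\pi_{2}\colon E\times_{B}A\rightrightarrows E$ is not a parallel pair in $\mathcal{E}/E$. There $E$ means the terminal object $\textnormal{id}_{E}$, and $a\pi_{2}$ is not a map over $E$.
\item Likewise $\pi_{2}$ is not a retraction in the slice, because $a\pi_{2}\neq\pi_{1}$.
\item In fact $\eta_{a}$ need not be split in $\mathcal{E}/E$ at all. In $\mathbf{Set}$ with $B=\mathbf{1}$ and $A=\mathbf{1}$, a retraction over $E$ would force $a$ to be surjective.
\end{itemize}

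So you have only shown that $\eta_{a}$ is split, hence regular, in $\mathcal{E}$. Since $\mathcal{E}$ is not assumed to have products, this does not transfer to the slice automatically.

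Your parenthetical about a diagonal-type pullback points to the correct repair. The unit $\eta_{a}$ is the equalizer in $\mathcal{E}/E$ of the pair
\[
(\pi_{1},\pi_{1}),\ (\pi_{1},a\pi_{2})\colon E\times_{B}A\rightrightarrows E\times_{B}E,
\]
both of which lie over $E$ via first projections. Equivalently, $\eta_{a}$ is the pullback along $(\pi_{1},a\pi_{2})$ of the diagonal $E\to E\times_{B}E$, and that diagonal is split over $E$ by $\pi_{1}$. With this fix your proof is complete, and it supplies the justification the paper only asserts.
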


\begin{proof}
Each right tensor by an object $p$ is given by $\Sigma_{p}\nabla_{p}$
and this has a right adjoint if and only if every $\nabla_{p}$ does
by Prop. \ref{adjtri}.
\end{proof}

\begin{rem}
In this case the monoidal closure is given by $\left[f,g\right]\cong\boxtimes_{f}\Delta_{f}\left(g\right)$.
That the monoidal closure of the slices is necessarily defined this
way is simply composition of right adjoints. In the cartesian setting,
this reduces to the standard internal hom of the slice category $\Pi_{f}\Delta_{f}\left(g\right)$
which is one reason we have chosen the right closure convention.
\end{rem}

In some cases one has a locally subcartesian category where the base
structure $\mathcal{E}$ is subcartesian closed, but this does not
extend to each slice category. The following concerns this situation.
\begin{prop}
Suppose $\mathcal{E}$ is locally subcartesian with finite limits.
Then each functor $\nabla_{Y}\colon\mathcal{E}\to\mathcal{E}/Y$ has
a right adjoint $\boxtimes_{Y}\colon\mathcal{E}/Y\to\mathcal{E}$
precisely when the subcartesian structure on $\mathcal{E}\cong\mathcal{E}/\mathbf{1}$
is right closed.
\end{prop}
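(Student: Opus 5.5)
The plan is to identify the relevant functors and then apply the adjoint triangle theorem (Prop. \ref{adjtri}), exactly as in the proof of Prop. \ref{affright}, but now only over the terminal object. Write $!_{Y}\colon Y\to\mathbf{1}$. The functor $\nabla_{Y}$ is the affine base change $\nabla_{!_{Y}}\colon\mathcal{E}/\mathbf{1}\cong\mathcal{E}\to\mathcal{E}/Y$. Right tensoring by $Y$ in the subcartesian monoidal structure on $\mathcal{E}\cong\mathcal{E}/\mathbf{1}$ is
\[
(-)\otimes Y=\Sigma_{!_{Y}}\nabla_{!_{Y}}(-)=\Sigma_{Y}\nabla_{Y}(-).
\]
Right closure of $\mathcal{E}$ means that each $(-)\otimes Y$ has a right adjoint. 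So the statement reduces to showing, for each object $Y$, that $\nabla_{Y}$ has a right adjoint if and only if $\Sigma_{Y}\nabla_{Y}$ does.

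The forward direction is just composition of adjoints. Since $\mathcal{E}$ has finite limits, $\Sigma_{Y}\dashv\Delta_{Y}$, where $\Delta_{Y}=Y\times(-)$. Hence if $\nabla_{Y}\dashv\boxtimes_{Y}$, then $\Sigma_{Y}\nabla_{Y}\dashv\boxtimes_{Y}\Delta_{Y}$, and the internal hom is $[Y,Z]\cong\boxtimes_{Y}(Y\times Z\to Y)$, matching the remark after Prop. \ref{affright}.

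For the converse I apply Prop. \ref{adjtri} with $P=\Sigma_{Y}\colon\mathcal{E}/Y\to\mathcal{E}$, $Q=\Delta_{Y}$, $F=\nabla_{Y}$ and $\mathcal{A}=\mathcal{E}$. Two hypotheses must be checked. First, $\mathcal{A}=\mathcal{E}$ has equalizers of coreflexive pairs, since it has finite limits. Second, the unit components of $\Sigma_{Y}\dashv\Delta_{Y}$ must be regular monomorphisms in $\mathcal{E}/Y$. At $h\colon H\to Y$ the unit is $(h,1)\colon H\to Y\times H$ over $Y$. This is a split mono (split by the projection), so it is regular: it is the equalizer of $1$ and $(h\cdot\pi_{2},\pi_{2})$ on $Y\times H$, and that equalizer exists in $\mathcal{E}/Y$ because $\mathcal{E}/Y$ has finite limits. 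With both hypotheses in place, Prop. \ref{adjtri} shows that $\nabla_{Y}$ has a right adjoint precisely when $\Sigma_{Y}\nabla_{Y}=(-)\otimes Y$ does.

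The only real care point is bookkeeping. One must check that the identification $\mathcal{E}\cong\mathcal{E}/\mathbf{1}$ carries the chosen subpullbacks over $\mathbf{1}$ to the subcartesian tensor on $\mathcal{E}$, so that $(-)\otimes Y$ really is $\Sigma_{Y}\nabla_{Y}$ and not the version with the arguments swapped. This follows from the convention $u\otimes_{B}v:=\Sigma_{v}\nabla_{v}(u)$ together with the symmetry. Beyond this, the argument is the same mechanism as in Prop. \ref{affright}, specialized to maps into the terminal object, which is why it needs only right closure of $\mathcal{E}$ and not of every slice.
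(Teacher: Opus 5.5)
Your proposal follows the paper's proof: it identifies $(-)\otimes Y$ with $\Sigma_{Y}\nabla_{Y}$ and applies Prop.~\ref{adjtri} with $P=\Sigma_{Y}$ and $Q=\Delta_{Y}$. You also check the hypotheses explicitly, which the paper's one-line proof leaves implicit.

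One step in that check is wrong as written. The projection $\pi_{2}\colon Y\times H\to H$ is not a morphism in $\mathcal{E}/Y$ from $\pi_{1}$ to $h$, since that would require $h\pi_{2}=\pi_{1}$. For the same reason, $(h\pi_{2},\pi_{2})$ is not an endomorphism of $\pi_{1}$ over $Y$. So the unit $(h,1)$ is split, and equalizes your pair, only in $\mathcal{E}$, not in $\mathcal{E}/Y$.

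The conclusion you need still holds, and the repair is one line. The maps $(\pi_{1},\pi_{1})$ and $(\pi_{1},h\pi_{2})$ from $Y\times H$ to $Y\times Y$ both lie over $Y$, taking $\pi_{1}$ as the structure map on each side. Their equalizer, computed as in $\mathcal{E}$ since $\Sigma_{Y}$ creates equalizers, is exactly $(h,1)\colon H\to Y\times H$. More generally, any regular mono of $\mathcal{E}$ that lies over $Y$ is regular in $\mathcal{E}/Y$: pair the two equalized maps with the structure map to $Y$.
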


\begin{proof}
The subcartesian monoidal structure on $\mathcal{E}\cong\mathcal{E}/\mathbf{1}$
is given by $f\otimes g=\Sigma_{g}\nabla_{g}\left(f\right)$ where
$f$ and $g$ may be seen respectively as maps from objects $X$ and
$Y$ to the terminal. This is right closed only when $\Sigma_{g}\nabla_{g}$
has a right adjoint. By Prop. \ref{adjtri} this happens precisely
when $\nabla_{g}$ has a right adjoint.
\end{proof}

Similar to the earlier setting of locally subcartesian categories,
we see that locally subcartesian closed categories may be defined
as a special type of indexed closed monoidal category.
\begin{cor}
Suppose $\mathcal{E}$ has pullbacks. A special indexed monoidal category
$\nabla_{\left(-\right)}\colon\mathcal{E}^{\textnormal{op}}\to\mathbf{MonCat}$
as in Definition \ref{spec} is closed if and only if each $\nabla_{p}$
has a right adjoint.
\end{cor}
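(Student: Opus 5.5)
The plan is to reduce the statement to Prop. \ref{affright} by way of Corollary \ref{specmon}. First, I translate the special indexed monoidal category into a locally semicartesian structure on $\mathcal{E}$, using Corollary \ref{specmon}. By Prop. \ref{alwayssemi} each slice $(\mathcal{E}/X,\otimes_X,I_X)$ is semicartesian, so each tensor $f\otimes_X g=\Sigma_g\nabla_g(f)$ comes equipped with projections. In this way the data of $\nabla_{(-)}$ determines a chosen commuting square over every cospan, whose apex is $\nabla_g(f)$ composed with $g$. Under this translation, ``closed'' means that each slice monoidal structure is right closed. Equivalently, for every object $g$ of $\mathcal{E}/X$ the functor $(-)\otimes_X g=\Sigma_g\nabla_g$ has a right adjoint. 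This is literally the definition of locally (semi)cartesian closed in the semicartesian variant.

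Next, I argue exactly as in Prop. \ref{affright}, which the paper remarks applies in both the semicartesian and subcartesian settings. For the easy direction, if $\nabla_p$ has a right adjoint $\boxtimes_p$, then $\Sigma_p\nabla_p$ has the right adjoint $\boxtimes_p\Delta_p$, because $\Sigma_p\dashv\Delta_p$ holds whenever $\mathcal{E}$ has pullbacks. Hence $(-)\otimes_X p$ is right closed, with $[p,-]\cong\boxtimes_p\Delta_p$.

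The converse is the step I expect to be the main obstacle. Suppose $\Sigma_p\nabla_p$ has a right adjoint. I apply the adjoint triangle theorem, Prop. \ref{adjtri}, with $P=\Sigma_p\colon\mathcal{E}/E\to\mathcal{E}/B$, $Q=\Delta_p$ and $F=\nabla_p$. Two hypotheses need checking.
\begin{enumerate}
\item The unit components $h\to\Delta_p\Sigma_p h$ of $\Sigma_p\dashv\Delta_p$ must be regular monomorphisms in $\mathcal{E}/E$. Such a component is the map $(1,h)\colon H\to E\times_B H$, which is a section of a projection, and so is split mono and in particular regular mono.
\item The domain $\mathcal{E}/B$ of $F$ must have equalizers of coreflexive pairs. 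If $\mathcal{E}$ has only pullbacks, I obtain these by forming the equalizer of $u,v\colon H\rightrightarrows K$ over $B$ as the pullback of the diagonal $K\to K\times_B K$ along $(u,v)$.
\end{enumerate}
With both hypotheses in place, Prop. \ref{adjtri} gives that $\nabla_p$ has a right adjoint. This holds for every $p$, since every morphism $p\colon E\to B$ is an object of $\mathcal{E}/B$.

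Finally, I check that nothing in the argument uses the uniqueness of mediating maps, only the functor $\nabla_p$ itself. The statement therefore holds for arbitrary special indexed monoidal categories, and this completes the equivalence.
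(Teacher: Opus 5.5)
Your route is the paper's. The paper's proof notes that the tensor of a special indexed monoidal category is $(-)\otimes_X g=\Sigma_g\nabla_g$ and then invokes Prop. \ref{affright}, which is the adjoint triangle theorem applied with $P=\Sigma_p$, $Q=\Delta_p$, $F=\nabla_p$. The formula for the tensor is already part of Definition \ref{spec}, so your detour through Prop. \ref{alwayssemi} is harmless but unnecessary. Your easy direction and your check (2) of equalizers in $\mathcal{E}/B$ are fine.

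The justification in (1) is wrong as written, although the claim itself is true.

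\begin{itemize}
\item \emph{What Prop. \ref{adjtri} needs.} The unit components must be regular monos in $\mathcal{B}=\mathcal{E}/E$.
\item \emph{What you actually showed.} The projection $\pi_2\colon E\times_B H\to H$ is not a morphism of $\mathcal{E}/E$, since that would require $h\pi_2=\pi_1$. So you have only shown that $(h,1_H)\colon H\to E\times_B H$ is split in $\mathcal{E}$.
\item \emph{It is not split over $E$ in general.} Work in $\mathbf{Set}$ with $B=\mathbf{1}$, $E=\{e_1,e_2\}$, $H=\{x\}$ and $h(x)=e_1$. A retraction over $E$ would have to send $(e_2,x)$ into the empty fibre $h^{-1}(e_2)$.
\item \emph{The repair.} Give $E\times_B H$ and $E\times_B E$ their first projections to $E$. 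Then $(h,1_H)$ is the equalizer in $\mathcal{E}$ of the two maps $1\times_B h$ and $\delta\pi_1$ from $E\times_B H$ to $E\times_B E$, and both are morphisms over $E$. Equalizers in $\mathcal{E}/E$ are created by the forgetful functor to $\mathcal{E}$, so this exhibits the unit as a regular mono in $\mathcal{E}/E$. Equivalently, the unit is the pullback of the diagonal $\delta\colon E\to E\times_B E$, which is split over $E$ by $\pi_1$.
\end{itemize}

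With this fix your proof is complete.
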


\begin{proof}
Such indexed monoidal categories have tensor $\left(-\right)\otimes g\cong\Sigma_{g}\nabla_{g}\left(-\right)$
by Prop. \ref{specmon} and so Prop. \ref{affright} applies.
\end{proof}

\begin{rem}
It follows that locally subcartesian closed categories $\mathcal{E}$
are closed under slicing by any object $X\in\mathcal{E}$. In particular,
for any $a\colon A\to X$ and $b\colon B\to X$, and map $f\colon A\to B$
over $X$ we have an affine base change $\nabla_{f}$ defined by the
commuting square
\[
\xymatrix@=1em{\left(\mathcal{E}/X\right)/a\ar[d]_{\cong} &  & \left(\mathcal{E}/X\right)/b\ar[d]^{\cong}\ar[ll]_{\;\nabla_{f}}\\
\mathcal{E}/A &  & \mathcal{E}/B\ar[ll]^{\;\nabla_{\textnormal{dom}\left(f\right)}}
}
\]
and similarly for $\Sigma_{f}$, $\Delta_{f},\;\boxtimes_{f}$, giving
a special indexed monoidal category on $\mathcal{E}/X$.
\end{rem}

The pullback operation does not respect the tensor structure in general,
however the closed structure is respected in the following sense.

\begin{prop}
Suppose $\mathcal{E}$ is locally subcartesian closed. Then for all
pairs $g$ and $h$ we have the Frobenius reciprocity isomorphism
$\Delta_{f}\left[g,h\right]\cong\left[\nabla_{f}\left(g\right),\Delta_{f}\left(h\right)\right]$.
\end{prop}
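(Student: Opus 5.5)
We prove the isomorphism by Yoneda: for $f\colon Y\to X$ and $g,h\in\mathcal{E}/X$, show that both sides represent the same functor on $\mathcal{E}/Y$. The tools are the description of the internal hom in the Remark following Prop.~\ref{affright}, $\left[g,h\right]\cong\boxtimes_{g}\Delta_{g}\left(h\right)$, together with the adjunctions $\nabla_{p}\dashv\boxtimes_{p}$ and $\Sigma_{p}\dashv\Delta_{p}$. Once the domain and codomain of the comparison are fixed, the argument should be a chain of natural isomorphisms.

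Fix a test object $k\in\mathcal{E}/Y$. First,
\[
\mathcal{E}/Y\left(k,\Delta_{f}\left[g,h\right]\right)\cong\mathcal{E}/X\left(\Sigma_{f}k,\left[g,h\right]\right)\cong\mathcal{E}/X\left(\Sigma_{f}k\otimes_{X}g,h\right),
\]
where $\Sigma_{f}k\otimes_{X}g=\Sigma_{g}\nabla_{g}\Sigma_{f}\left(k\right)$. Second,
\[
\mathcal{E}/Y\left(k,\left[\nabla_{f}g,\Delta_{f}h\right]\right)\cong\mathcal{E}/Y\left(k\otimes_{Y}\nabla_{f}g,\Delta_{f}h\right)\cong\mathcal{E}/X\left(\Sigma_{f}\left(k\otimes_{Y}\nabla_{f}g\right),h\right).
\]
It therefore suffices to give a natural isomorphism $\Sigma_{f}\left(k\otimes_{Y}\nabla_{f}g\right)\cong\Sigma_{f}k\otimes_{X}g$ in $\mathcal{E}/X$. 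This is the projection formula (Frobenius reciprocity in its left-adjoint form), and the stated isomorphism is its mate.

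To obtain the projection formula, let $g'\colon Y\otimes_{X}G\to Y$ be $\nabla_{f}g$, with top map $f'\colon Y\otimes_{X}G\to G$. Unwinding definitions,
\[
\Sigma_{f}\left(k\otimes_{Y}\nabla_{f}g\right)=\Sigma_{f}\Sigma_{g'}\nabla_{g'}\left(k\right)=\Sigma_{g}\Sigma_{f'}\nabla_{g'}\left(k\right),
\]
using $fg'=gf'$. The chosen subpullback square of $f$ and $g$ gives, by the Beck comparison of Prop.~\ref{beck1}, an invertible $\Sigma_{f'}\nabla_{g'}\cong\nabla_{g}\Sigma_{f}$. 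By the symmetry of chosen subpullbacks, this square with its legs read in the other orientation is still a subpullback. Hence
\[
\Sigma_{g}\Sigma_{f'}\nabla_{g'}\left(k\right)\cong\Sigma_{g}\nabla_{g}\Sigma_{f}\left(k\right)=\Sigma_{f}k\otimes_{X}g.
\]
Equivalently, the outer rectangle formed from the subpullback of $k$ along $g'$ stacked on the subpullback of $f,g$ is the chosen subpullback of $fk$ and $g$, by binary coherence (Definition~\ref{locsemidef}). All of these isomorphisms are natural in $k$, since Beck data is natural, so Yoneda gives the stated isomorphism, natural in $g$ and $h$ as well.

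The main obstacle is bookkeeping rather than content. One must track which leg of each subpullback is used, because the tensor convention $u\otimes_{B}v=\Sigma_{v}\nabla_{v}u$ is asymmetric in its notation, and check that the orientation of the Beck square needed matches one provided by Prop.~\ref{beck1}; this is where symmetry of the chosen subpullbacks is needed. It is also worth noting where pullbacks enter: only through $\Delta_{f}$ and the adjunction $\Sigma_{f}\dashv\Delta_{f}$. By contrast, $\nabla_{f}$ appears only as reindexing of the exponent $g$, which is why $\Delta_{f}$ preserves the internal hom but not the tensor.
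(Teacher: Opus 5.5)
Your argument is correct and is essentially the paper's proof transposed across the adjunctions. The paper works directly with right adjoints, $\Delta_{f}\boxtimes_{g}\Delta_{g}(h)\cong\boxtimes_{g'}\Delta_{f'}\Delta_{g}(h)\cong\boxtimes_{g'}\Delta_{g'}\Delta_{f}(h)$, using the $\Delta$-$\boxtimes$ Beck isomorphism of Cor.~\ref{beck3} and pseudofunctoriality of $\Delta$; you instead prove the left-adjoint projection formula via Prop.~\ref{beck1} and $\Sigma_{f}\Sigma_{g'}=\Sigma_{g}\Sigma_{f'}$, then apply Yoneda, and since Cor.~\ref{beck3} is itself obtained in the paper as the mate of Prop.~\ref{beck1}, the two routes have the same content.
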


\begin{proof}
We construct a subpullback of $f$ and $g$ as in Cor. \ref{beck3},
thus giving
\[
\Delta_{f}\left[g,h\right]\cong\Delta_{f}\boxtimes_{g}\Delta_{g}\left(h\right)\cong\boxtimes_{g'}\Delta_{f'}\Delta_{g}\left(h\right)\cong\boxtimes_{g'}\Delta_{g'}\Delta_{f}\left(h\right)\cong\left[\nabla_{f}\left(g\right),\Delta_{f}\left(h\right)\right]
\]
by the same corollary, pseudofunctoriality of $\Delta_{\left(-\right)}$,
and the definition of $g'$.
\end{proof}

\section{Polynomials and polynomial functors\label{bicatyspanpoly}}

In this section we define the bicategory of polynomials applicable
to this situation, discuss composition of polynomials, and establish
the bi-equivalence between polynomials and polynomial functors with
a suitable notion of strength.

\subsection{Polynomials over spans with subpullbacks}

The reason for establishing the protocalibration earlier is to give
a more straightforward way to establish the existence and description
of the bicategory of polynomials for the subcartesian setting. Of
course, one may define this structure directly and verify it forms
a bicategory, but this allows us to avoid needing to check properties
such as horizontal composition of general 2-cells and coherence. Note
that because the subcartesian bicategory of spans is so similar to
the usual cartesian setting, the resulting polynomial structure remains
very similar.

The reader will notice here that the horizontal squares are subpullbacks,
and the diamond-shaped squares are pullbacks, as these two operations
on polynomial data play a different role.
\begin{prop}
\label{polycomp} Suppose $\left(\mathcal{E},\otimes\right)$ is locally
subcartesian closed. The bicategory of polynomials over the protocalibration
$\mathscr{R}$ given by the replete image of $\mathcal{E}\to\mathbf{Span}\left(\mathcal{E},\otimes\right)^{\textnormal{co}}$
is the bicategory $\mathbf{Poly}\left(\mathcal{E},\otimes\right)$
with objects of $\mathcal{E}$, 1-cells given by
\[
\xymatrix@=1em{I &  & E\ar[rr]^{p}\ar[ll]_{s} &  & B\ar[rr]^{t} &  & J}
\]
and general 2-cells given by isomorphism classes of diagrams
\[
\xymatrix@=0.7em{ &  & E\ar[rr]^{p}\ar[ldl]_{s} &  & B\ar[rdr]^{t}\ar@{=}[d]\\
I &  & P\ar[u]\ar[d]\ar[rr]\ar@{}[drr]|-{\textnormal{spb}} &  & B\ar[d] &  & J\\
 &  & M\ar[rr]_{q}\ar[llu]^{u} &  & N\ar[rru]_{v}
}
\]
The composite of two polynomials is constructed as in the diagram
\[
\xymatrix@=0.7em{ &  &  & A\ar[rrr]^{p'}\ar[ddl]_{a} & \ar@{}[dd]|-{\textnormal{spb}} &  & T\ar[d]^{e}\ar[rrr]^{w'} &  & \ar@{}[dd]|-{\textnormal{dspb}} & R\ar[ddr]^{b}\\
 &  &  &  &  &  & S\ar[rrd]^{t'}\ar[lld]_{m'}\ar@{}[dd]|-{\textnormal{pb}}\\
 &  & E\ar[rr]^{p}\ar[ldl]_{s} &  & B\ar[rrd]^{t} &  &  &  & X\ar[rr]^{w}\ar[lld]_{m} &  & Y\ar[rrd]^{n}\\
I &  &  &  &  &  & J &  &  &  &  &  & K
}
\]
where $\textnormal{spb}$ and $\textnormal{dspb}$ indicate chosen
subpullbacks and distributivity subpullbacks respectively.
\end{prop}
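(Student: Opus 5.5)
The plan is to unwind Street's general construction of the bicategory of polynomials over a protocalibration \cite{proto,polyspans}, specialised to the protocalibration $\mathscr{R}$ of Cor.~\ref{chosenproto}. In that theory a polynomial from $I$ to $J$ is a diagram $I\nrightarrow E\xleftarrow{}B\nrightarrow J$ in $\mathscr{C}=\mathbf{Span}(\mathcal{E},\otimes)^{\textnormal{co}}$, consisting of a general 1-cell, a 1-cell in $\mathscr{R}$ pointing backwards, and another general 1-cell. So the first step is to normalise such data. Every span decomposes as $(s,t)=(s,1);(1,t)$, and every element of $\mathscr{R}$ is isomorphic to some $(1,p)$. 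Absorbing the redundant parts, using the freedom in Street's polynomials up to equivalence, reduces a polynomial to a triple $I\xleftarrow{s}E\xrightarrow{p}B\xrightarrow{t}J$ in $\mathcal{E}$, matching the usual Gambino--Kock normal form.

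The second step identifies the 2-cells. In Street's description a 2-cell is a 1-cell between middle objects together with invertible/non-invertible 2-cells making the evident triangles commute, and it is taken up to isomorphism. I will compute such a morphism from $(s,p,t)$ to $(u,q,v)$. The compatibility with the $\mathscr{R}$-legs forces a square in $\mathscr{C}$ over $(1,p)$ and $(1,q)$. Since 2-cells of $\mathbf{Span}(\mathcal{E},\otimes)$ are reversed in $\mathscr{C}$, and composites $(x,y);(1,q)$ are formed by subpullback, this square unpacks to a map $B\to N$ together with the chosen subpullback $P$ of $q$ along it. We also get maps $P\to E$ and $P\to M$ making the $s/u$ triangle commute and $t=v\circ(B\to N)$. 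This gives exactly the displayed diagram, with the spb square. Isomorphism classes arise because Street's 2-cells are equivalence classes.

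The third step is composition. In Street's framework, the composite of $(s,p,t)$ with $(m,w,n)$ is computed by commuting the backwards $\mathscr{R}$-leg $(1,w)$ past the forward span $(1,t)$, and then the backwards leg $(1,p)$ past the resulting span, using the bipullbacks guaranteed by the protocalibration axioms. By the first calibrating lemma, the bipullback of $(1,t)$ and $(1,m)$ is the ordinary pullback $S$; this gives the pb diamond. Next, pulling the resulting $(1,t')$-type composite through $(w,1)$ is the bipullback of the second calibrating lemma, namely the distributivity subpullback of $t'$ along $w$; this gives the dspb square with $T,R$. Finally, composing the span $(m',e)$-leg with $(1,p)$ (i.e.\ $(p,1)$ in reversed form) is ordinary span composition via the chosen subpullback of $p$ and $m'e$, giving $A$ with the spb square. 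Reading off legs gives $I\xleftarrow{sa}A\xrightarrow{w'p'}R\xrightarrow{nb}K$.

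The main obstacle is this third step: getting the order of the bipullbacks and the direction conventions (co, reversal of $(1,f)$ versus $(f,1)$) right. I also need to check that the decomposition of general spans in the proof of Cor.~\ref{chosenproto} really produces the pullback and then the distributivity subpullback, rather than a subpullback in the diamond. I would verify this by tracing Street's composite formula step by step and invoking the two calibrating lemmas, together with Lemma~\ref{hordistsubpasting}, to identify the chosen representatives.
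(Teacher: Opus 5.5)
Your route is the paper's own: its proof just appeals to Street's derivation, with span composition by subpullback instead of pullback. The 1-cells, 2-cells and composite you arrive at are also right. However, you misidentify which 1-cells lie in $\mathscr{R}$, so step 2 as written does not actually derive the stated 2-cells.

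\textbf{The 1-cells.} A 1-cell of $\mathbf{Poly}(\mathcal{E},\otimes)\simeq(\mathbf{Span}_{\mathscr{R}}\mathscr{C})^{\textnormal{op}}$ is a two-legged span $J\xleftarrow{(1,t)}B\xrightarrow{(p,s)}I$ in $\mathscr{C}$ (compare the proof of Prop.~\ref{polytocat}). The $\mathscr{R}$-leg is $(1,t)$. The map $p$ sits inside the general leg $(p,s)$, whose apex is $E$. So there is no three-part diagram and nothing to absorb.

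\textbf{The 2-cells.} The invertible 2-cell is $h;(1,v)\cong(1,t)$ for the middle 1-cell $h=(x,y)\colon B\nrightarrow N$, not a square over $(1,p)$ and $(1,q)$. By the nullary coherence axiom, $(x,y);(1,v)=(x,vy)$. This forces $x$ to be invertible, so $h\cong(1,k)$ with $vk=t$; that is why the right-hand side of the 2-cell diagram is the identity on $B$. The remaining, possibly non-invertible, 2-cell in $\mathscr{C}$ between $(p,s)$ and $(1,k);(q,u)$ is a map of spans in $\mathbf{Span}(\mathcal{E},\otimes)$ into $E$. Its source is the apex of $(1,k);(q,u)$, namely the chosen subpullback $P$ of $q$ along $k$. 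This gives the rest of the displayed diagram.

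\textbf{Composition.} In step 3 there is only one bipullback: that of the $\mathscr{R}$-leg $(1,t)$ of the first polynomial along the general leg $(w,m)$ of the second. It is computed in two stages via $(w,m)=(w,1);(1,m)$, as in the proof of Cor.~\ref{chosenproto}. First comes the pullback $S$, giving $(1,t')$ and $(1,m')$. Then comes the distributivity subpullback $(e,w',b)$ of $t'$ along $w$. The new $\mathscr{R}$-leg is $(1,b);(1,n)=(1,nb)$.

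The square with $p$ is not a bipullback. It is ordinary composition in $\mathscr{C}$ of the general legs $(w',e);(1,m')=(w',m'e)$ and $(p,s)$, and this is exactly where the chosen subpullbacks enter.

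Your concrete computations in step 3 already do precisely this. Once the roles of the legs are corrected, your argument goes through, and Lemma~\ref{hordistsubpasting} is not needed.
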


\begin{proof}
The derivation of the polynomial structure is essentially the same
as given for the cartesian setting by Street \cite{polyspans}. The
difference is the underlying bicategory of the protocalibration has
composition given by subpullback rather than pullback.
\end{proof}

\begin{rem}
In principle one may use this idea of taking spans of spans using
our more general span composition. At the base level one still takes
coherent subpullbacks, and at the bicategorical level giving coherent
subpullbacks amounts to giving non-universal distributivity diagrams
(which are subpullbacks in the base) which coherently compose horizontally,
vertically and via cubes. Here one still requires that the diamond-shaped
squares are pullbacks. In short, this amounts to a locally subcartesian
category with pullbacks, a dependent tensor, and coherent distributivity
data from this tensor. We call such a category \emph{locally subcartesian
tensored} but we do not study them here due to a lack of convincing
examples (and the fact their definition would involve a number of
non-trivial coherence conditions). Note this is not quite an instance
of a protocalibration, since this uses a composition of bicategorical
spans more general than bipullback. However, we still require this
structure restricts to bipullbacks when both legs are groupoid opfibrations
$\left(1,f\right)^{\textnormal{op}}$ to ensure the diamond-shaped
squares are pullbacks, so that the usual bicategory of spans with
pullbacks remains inside polynomials.
\end{rem}

The following property of polynomial composition, that a version of
Weber's description of polynomial composition \cite{weber} still
holds, is another good reason that we have required the existence
of pullbacks. This description is useful for understanding horizontal
composition of subcartesian 2-cells.
\begin{prop}
Suppose $\mathcal{E}$ is locally subcartesian closed. The composite
of two polynomials over $\mathcal{E}$ is given by the terminal diagram
of the form below
\[
\xymatrix@=1em{ &  &  &  & P\ar[rr]^{\widetilde{p}}\ar[lld]_{i}\ar@{}[d]|-{\textnormal{spb}} &  & M\ar[rr]^{\widetilde{w}}\ar[rrd]_{k}\ar[lld]^{h} &  & \ar@{}[d]|-{\textnormal{spb}}Q\ar[drr]^{j}\\
 &  & E\ar[rr]^{p}\ar[ldl]_{s} &  & B\ar[rrd]^{t} &  &  &  & X\ar[rr]^{w}\ar[lld]_{m} &  & Y\ar[rrd]^{n}\\
I &  &  &  &  &  & J &  &  &  &  &  & K
}
\]
where the indicated squares are subpullbacks and the middle square
commutes\footnote{This is actually slightly more than a polynomial composite, as it
comes equipped with a factorization of the top middle morphism (which
is unique up to unique isomorphism).}, and a morphism is a factorization of the subpullbacks of a diagram
through the subpullbacks of another.
\end{prop}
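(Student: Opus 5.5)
Let $(s,p,t)\colon I\nrightarrow J$ and $(m,w,n)\colon J\nrightarrow K$ be the two polynomials. The plan is to show that the composite diagram of Prop. \ref{polycomp}, viewed in the shape of the statement, is terminal among such diagrams. This follows the pattern of Weber's cartesian argument, with pullbacks replaced by chosen subpullbacks wherever the statement asks for them. First we record the composite as a diagram of the required shape. Set $S=B\times_{J}X$ (the pullback), $T=\boxtimes_{w'}$-data, so that $(e,w',b)$ is the distributivity subpullback of $t'$ along $w$. Then $A=E\otimes_{B}T$ is the chosen subpullback of $p$ and $m'e$. Take $P:=A$ and $M:=T$, with $\widetilde p:=p'$, $h:=m'e$, $k:=t'e$, $\widetilde w:=w'$, $Q:=R$ and $j:=b$. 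The left square is then a subpullback by construction. The right square is the distributivity subpullback, hence a subpullback. The middle square commutes because the pullback square does.

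Next, for terminality, take an arbitrary diagram $(P_{0},M_{0},Q_{0},i_{0},\widetilde p_{0},h_{0},k_{0},\widetilde w_{0},j_{0})$ of this shape. Since $mk_{0}=th_{0}$, the pullback property of $S$ gives a unique $c\colon M_{0}\to S$ with $m'c=h_{0}$ and $t'c=k_{0}$. Now the right square $(c,\widetilde w_{0},j_{0})$ is a subpullback around $t'$ and $w$. By the terminality in Definition \ref{defdspb}, there are unique maps $M_{0}\to T$ and $Q_{0}\to R$ compatible with $e,w',b$. Finally, the left square of the given diagram is a subpullback of $p$ and $h_{0}=m'e\cdot(M_{0}\to T)$. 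Composing it with the subpullback square defining $A$, Lemma \ref{spb1} (in its vertical form, using the binary coherence of subpullbacks) produces a unique map $P_{0}\to A$ exhibiting the left square as the subpullback obtained by pasting. This is the required factorization of the subpullbacks of one diagram through those of the other.

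Uniqueness at each stage comes from the uniqueness in the pullback, in the distributivity subpullback, and in mediating maps into subpullbacks. The factorization through $S$ is unique up to unique isomorphism, as the footnote indicates.

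The main obstacle is the bookkeeping in the third step. The morphism $P_{0}\to A$ must factor the given left subpullback through the chosen one, and this requires the square $P_{0}\to M_{0}\to T$ over $B$ to be recognised as a subpullback of $A$ along $M_{0}\to T$. I would obtain this from Lemma \ref{subpasting} applied vertically: the outer square is a subpullback by hypothesis and the lower square is the chosen subpullback. I would then check that every morphism of diagrams arises this way, so that the composite is genuinely terminal rather than merely weakly terminal. This again uses the sublimit uniqueness of each square.
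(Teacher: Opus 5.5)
Your proposal is correct and follows the paper's proof essentially step for step: you factor $(h,k)$ through the pullback $S$, then factor the right subpullback through the distributivity subpullback $(e,w',b)$ of $t'$ along $w$, then induce $P\to A$ by Lemma \ref{spb1}, with terminality coming from uniqueness at each of these three stages. Two minor slips: $R\to Y$ is $\boxtimes_{w}(t')$ rather than ``$\boxtimes_{w'}$-data'', and the footnote's factorization is that of the top middle morphism $P\to Q$ through $M$, not the map into $S$ (which is strictly unique).
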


\begin{proof}
We check that the polynomial composite diagram of Prop. \ref{polycomp}
is the terminal diagram of the form above. Given any such diagram,
we first factor $h$ and $k$ via map $\epsilon\colon M\to S$ through
the pullback $S$ of $t$ and $m$. Then the subpullback $\left(\epsilon,\widetilde{w},j\right)$
around $t'$ and $w$ factors through the distributivity subpullback
$\left(e,w',b\right)$ via maps $\overline{j}\colon Q\to R$ and $\overline{k}\colon M\to T$.
We then apply Lemma \ref{spb1} to induce $\overline{i}\colon P\to A$.
Now having shown existence, the uniqueness of factorizations through
distributivity subpullbacks and uniqueness of Lemma \ref{spb1} finish
the proof of terminality.
\end{proof}

\begin{prop}
\label{polytocat} For any locally subcartesian closed category $\mathcal{E}$,
there is a canonical pseudofunctor
\[
\mathbf{Poly}\left(\mathcal{E},\otimes\right)\to\mathbf{Cat}\colon\left(s,p,t\right)\mapsto\Sigma_{t}\boxtimes_{p}\Delta_{s}
\]
\end{prop}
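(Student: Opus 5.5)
The plan is to build the pseudofunctor on each component of the polynomial bicategory separately, and to use the Beck and distributivity isomorphisms already established to supply all the structure cells. The approach mirrors the cartesian case treated by Gambino--Kock and Weber.

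On objects, we send $I$ to $\mathcal{E}/I$. On a 1-cell $(s,p,t)$, we take the composite $\Sigma_{t}\boxtimes_{p}\Delta_{s}$, which exists because each $\nabla_{p}$ has a right adjoint by Prop. \ref{affright}.

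For a 2-cell, we factor it into two pieces. The first is a map of the middle leg through the chosen subpullback $P$ of $q$ along $B\to N$. The second is the part consisting of the maps $P\to E$ and $P\to M$, together with $B\to N$.

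\begin{itemize}
\item The component over $B\to N$ is given by the $\Sigma$-counit-type whiskering (or its mate).
\item The subpullback square $P\to M$, $B\to N$ gives the invertible $\Delta$-$\boxtimes$ Beck comparison $\mathfrak{t}$ of Cor. \ref{beck3}, which identifies $\Delta_{B\to N}\boxtimes_{q}$ with $\boxtimes_{P\to B}\Delta_{P\to M}$.
\item The map $P\to E$ over $B$ contributes a unit/counit of the adjunction $\nabla\dashv\boxtimes$ along the factorization. Concretely, it induces a map $\boxtimes_{p}\Delta_{s}\Rightarrow\boxtimes_{P\to B}\Delta_{P\to E}\Delta_{s}$ via the mate of the counit candidate of Remark \ref{counitmap}, applied to the triangle $P\to E\to B$.
\end{itemize}

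Pasting these cells gives the natural transformation. Isomorphic representative diagrams give equal transformations, by uniqueness of mediating maps into subpullbacks.

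For composition, we compare $\Sigma_{n}\boxtimes_{w}\Delta_{m}\Sigma_{t}\boxtimes_{p}\Delta_{s}$ with the functor of the composite polynomial from Prop. \ref{polycomp}, reading that diagram from left to right.

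1. The pullback square $S$ gives the ordinary Beck–Chevalley isomorphism $\Delta_{m}\Sigma_{t}\cong\Sigma_{t'}\Delta_{m'}$.
2. The distributivity subpullback gives $\boxtimes_{w}\Sigma_{t'}\cong\Sigma_{b}\boxtimes_{w'}\Delta_{e}$ by Cor. \ref{beck2}.
3. The chosen subpullback $A$ gives $\Delta_{e}\Delta_{m'}\boxtimes_{p}\cong\boxtimes_{p'}\Delta_{a}$, using pseudofunctoriality of $\Delta$ and Cor. \ref{beck3}.
4. Pseudofunctoriality of $\Sigma$, $\Delta$ and $\boxtimes$ then collapses the string to $\Sigma_{nb}\boxtimes_{w'p'}\Delta_{sa}$. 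Here $\boxtimes$ is pseudofunctorial as the right adjoint of the pseudofunctor $\nabla$ of Lemma \ref{affps}.

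Identity constraints are strict, given the normalizations $\boxtimes_{1}=1$ and that $\nabla$ respects identities.

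The main obstacle is coherence: naturality of the composition constraint with respect to 2-cells, and the associativity and unit axioms. Checking these directly is heavy. I would avoid it by routing through Street's protocalibration theory, Cor. \ref{chosenproto}.

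\begin{itemize}
\item First, we have the pseudofunctor $\mathbf{Span}(\mathcal{E},\otimes)\to\mathbf{Cat}$ of Prop. \ref{spantocat}.
\item We show that it sends the 1-cells $(1,f)$ of $\mathscr{R}$ to functors $\Sigma_{f}$ having right adjoints $\Delta_{f}$, passing to $\mathbf{Cat}^{\mathrm{co}}$ as needed.
\item We show that it sends the bipullbacks of the two lemmas on bipullbacks in $\mathbf{Span}(\mathcal{E},\otimes)$ to squares whose mates are invertible. These mates are exactly the Beck isomorphisms of Prop. \ref{beck1} and Cor. \ref{beck2}.
\end{itemize}

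The universal property of polynomials over a protocalibration, in Street's \cite{proto} sense, should then extend this to a pseudofunctor on $\mathbf{Poly}(\mathcal{E},\otimes)$ given by $(s,p,t)\mapsto\Sigma_{t}\boxtimes_{p}\Delta_{s}$. The polynomial $(s,p,t)$ decomposes as the span $(s,p)$ followed by the right adjoint of $(1,t)$. Coherence is thereby inherited, and the explicit constraints above are what that extension produces.

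If that universal property is unavailable in the needed form, the fallback is a direct check. Every constraint is built from unique factorizations through subpullbacks, distributivity subpullbacks and pullbacks, so each coherence diagram reduces to an equality of two mediating maps into a (distributivity) subpullback and holds by uniqueness. The pasting lemmas (Lemmas \ref{hordistsubpasting}, \ref{vertdistsubpasting}, \ref{cubedistsubpasting}) ensure that these composites are again of the right form.
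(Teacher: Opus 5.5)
Your explicit data is correct and matches the paper's concrete description. The composition constraint is built from ordinary $\Sigma$--$\Delta$ Beck--Chevalley, Cor.~\ref{beck2} and Cor.~\ref{beck3}. Your 2-cell component also agrees with the paper's pasting of $\eta$ followed by the inclusion, because the mate of the counit candidate $\Sigma_j\nabla_j\Rightarrow 1$ under the composite adjunction $\Sigma_j\nabla_j\dashv\boxtimes_j\Delta_j$ is $\boxtimes_j(\textnormal{inc})\circ\eta$.

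The gap is in the coherence step, which is where the content of the proposition lies. You feed the pseudofunctor of Prop.~\ref{spantocat}, $(s,t)\mapsto\Sigma_t\nabla_s$, into Street's universal property, with $(1,f)\mapsto\Sigma_f\dashv\Delta_f$.
\begin{itemize}
\item That universal property only adjoins adjoints of the images of $\mathscr{R}$-maps, namely the $\Delta_f$. So the extension can only produce composites of $\Sigma$, $\nabla$ and $\Delta$; a polynomial would go to something like $\Sigma_s\nabla_p\Delta_t\colon\mathcal{E}/J\to\mathcal{E}/I$. No $\boxtimes$ can ever appear, and passing to $\mathbf{Cat}^{\textnormal{co}}$ only reverses 2-cells, so it does not change this.
\item The Beck condition you propose to verify is false. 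Your pseudofunctor sends the distributivity bipullback to the square $\Sigma_f\Sigma_e\nabla_{p'}\cong\nabla_p\Sigma_d$ of Prop.~\ref{beck1}. Its mate $\Sigma_e\nabla_{p'}\Delta_d\Rightarrow\Delta_f\nabla_p$ is, already in the cartesian case, the counit $\Sigma_e\Delta_e\Delta_{pf}\Rightarrow\Delta_{pf}$. Its component at $1_B$ is $e\colon R\to A$. For instance, in $\mathbf{Set}$ with $B=1$, $E=2$ and $A=2\times Y$, this is $2\times Y^2\to 2\times Y$, which is not invertible.
\end{itemize}

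The missing idea, which is what the paper does, is to transpose to right adjoints \emph{before} invoking Street's universal property. Each $\Sigma_s\nabla_p$ has right adjoint $\boxtimes_p\Delta_s$; this is where closedness is used. Mates therefore turn Prop.~\ref{spantocat} into a pseudofunctor $\mathbb{X}\colon\mathbf{Span}(\mathcal{E},\otimes)^{\textnormal{coop}}\to\mathbf{Cat}$ with $(p,s)^{\textnormal{op}}\mapsto\boxtimes_p\Delta_s$ and $(1,t)^{\textnormal{op}}\mapsto\Delta_t$, where $\Delta_t$ now has a \emph{left} adjoint $\Sigma_t$.

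Via $\mathbf{Poly}(\mathcal{E},\otimes)\simeq(\mathbf{Span}_{\mathscr{R}}\mathscr{C})^{\textnormal{op}}$, pseudofunctors out of $\mathbf{Poly}(\mathcal{E},\otimes)$ correspond to such $\mathbb{X}$ satisfying the Beck condition with respect to these left adjoints. For $\mathbb{X}$:
\begin{itemize}
\item the image of the distributivity bipullback is the isomorphism of Cor.~\ref{beck3};
\item the required mates are ordinary Beck--Chevalley for pullbacks and the distributivity isomorphism of Cor.~\ref{beck2};
\item Prop.~\ref{beck1} enters only through its mate, Cor.~\ref{beck3}.
\end{itemize}
The extension then sends $(s,p,t)$ to $\Sigma_t\boxtimes_p\Delta_s$ and reproduces exactly your explicit cells.

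Your fallback does not close the gap either. The constraints involve units of $\nabla\dashv\boxtimes$, the inclusions $\nabla\Rightarrow\Delta$, and counits of $\Sigma\dashv\Delta$. You give no argument that each coherence diagram reduces to comparing two maps into a single universal object.
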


\begin{proof}
Abstractly this can be justified from the theory of protocalibrations
\cite{proto}, since $\mathbf{Poly}\left(\mathcal{E},\otimes\right)\simeq\left(\mathbf{Span}_{\mathscr{R}}\mathscr{C}\right)^{\textnormal{op}}$
where $\mathscr{C}=\mathbf{Span}\left(\mathcal{E},\otimes\right)^{\textnormal{co}}$
and $\mathscr{R}$ is the class of 1-cells $\left(1,f\right)$ in
$\mathscr{C}$ and those isomorphic, and so pseudofunctors $\mathbf{Poly}\left(\mathcal{E},\otimes\right)\to\mathbf{Cat}$
correspond to pseudofunctors $\mathbb{X}\colon\mathscr{C}^{\textnormal{op}}\to\mathbf{Cat}$
such that every $\mathbb{X}\left(\left(1,t\right)^{\textnormal{op}}\right)$
has a left adjoint respecting the Beck condition with pullbacks. The
canonical $\mathbb{X}\colon\mathbf{Span}\left(\mathcal{E},\otimes\right)^{\textnormal{coop}}\to\mathbf{Cat}$
sends each span $\left(p,s\right)^{\textnormal{op}}$ to $\boxtimes_{p}\Delta_{s}$
(a right adjoint analogue of Prop. \ref{spantocat}) and each $\mathbb{X}\left(\left(1,t\right)^{\textnormal{op}}\right)=\Delta_{t}$
has a left adjoint $\Sigma_{t}$ which respects the Beck condition
with pullbacks. This $\mathscr{R}$-cocompleteness then allows the
extension from $\mathscr{C}$ back to $\mathbf{Span}_{\mathscr{R}}\mathscr{C}$
showing the assignation has the above form on polynomial 1-cells,
and on each 2-cell $\left[k,\sigma\right]$ it is given by 
\[
\xymatrix@=1em{ &  & B\ar[rrd]^{\left(p,s\right)}\ar[lld]_{\left(1,t\right)}\ar[dd]|-{\left(1,k\right)} &  &  &  &  &  &  &  & XB\ar[rrdd]\sp(.6){\mathbb{X}_{*}\left(1,k\right)}\ar@{}[dd]|-{\Downarrow\epsilon_{\left(1,k\right)}}\ar@/^{0.8pc}/[rrrrdd]^{\mathbb{X}_{*}\left(1,t\right)}\\
J &  & \;\ar@{}[rr]|-{\Downarrow\sigma}\ar@{}[ll]|-{\;=} &  & I & \mapsto &  & \ar@{}[rd]|-{\Downarrow\mathbb{X}\sigma} &  &  &  &  &  & \ar@{}[ld]|-{=}\\
 &  & N\ar[urr]_{\left(q,u\right)}\ar[llu]^{\left(1,v\right)} &  &  &  & \mathbb{X}I\ar[rr]_{\mathbb{X}\left(q,u\right)}\ar@/^{0.8pc}/[rrrruu]^{\mathbb{X}\left(p,s\right)} &  & \mathbb{X}N\ar[rruu]\sp(.4){\mathbb{X}\left(1,k\right)}\ar[rrrr]_{\textnormal{id}} &  & \; &  & \mathbb{X}N\ar[rr]_{\mathbb{X}_{*}\left(1,v\right)} &  & \mathbb{X}J
}
\]
where we have chosen $\Delta_{\textnormal{id}}$ and $\boxtimes_{\textnormal{id}}$
to be identities and each $\mathbb{X}_{*}\left(1,f\right)=\Sigma_{f}$. 

We now also give a more concrete description of this pseudofunctor.
We note that in a similar way to the cartesian setting, the pseudofunctoriality
constraints are built from the $\Sigma$-$\Delta$ Beck data (with
pullbacks, not that of Lemma \ref{beck1}), the $\Delta$-$\boxtimes$
Beck data of Cor. \ref{beck3}, and $\Sigma$-$\boxtimes$-$\Delta$
distributivity data of Cor. \ref{beck2}. Note these all only involve
pullback functors $\Delta_{f}$, even though the coherence data of
the last two is constructed using subpullbacks.

In a similar fashion to Gambino and Kock's description of this pseudofunctor's
action on 2-cells in the cartesian setting \cite{gambinokock}, we
describe the action on 2-cells by assigning each vertical morphism\footnote{One may use this vertical assignment to recover $\mathbf{Span}\left(\mathcal{E}\right)^{\textnormal{co}}\to\mathbf{Cat}$.}
\[
\xymatrix@=1em{ &  & E\ar[rr]^{p}\ar[ldl]_{s} &  & B\ar[rdr]^{t}\ar@{=}[d]\\
I &  & P\ar[u]_{j}\ar[ll]^{a}\ar[rr]_{w} &  & B\ar[rr]_{t} &  & J
}
\]
to the pasting diagram 
\[
\xymatrix@=1em{ & \mathcal{E}/E\ar[rr]^{\textnormal{id}}\ar[rd]_{\Delta_{j}}\ar@{}[d]|-{\cong} & \ar@{}[dr]|-{\Downarrow\textnormal{inc}} & \mathcal{E}/E\ar[rr]^{\textnormal{id}}\ar[rd]^{\nabla_{j}} & \ar@{}[d]|-{\Downarrow\eta} & \mathcal{E}/E\ar[rd]^{\boxtimes_{p}}\ar@{}[d]|-{\cong}\\
\mathcal{E}/I\ar[rr]_{\Delta_{a}}\ar[ru]^{\Delta_{s}} & \; & \mathcal{E}/P\ar[rr]_{\textnormal{id}} & \; & \mathcal{E}/P\ar[rr]_{\boxtimes_{w}}\ar[ru]^{\boxtimes_{j}} & \; & \mathcal{E}/B\ar[rr]_{\Sigma_{t}} &  & \mathcal{E}/J
}
\]
and assigning each subcartesian morphism
\[
\xymatrix@=1em{I &  & P\ar[d]_{h}\ar[rr]\ar@{}[drr]|-{\textnormal{spb}}\ar[ll]_{a}\ar[rr]^{w} &  & B\ar[d]^{k}\ar[rr]^{t} &  & J\\
 &  & M\ar[rr]_{q}\ar[llu]^{u} &  & N\ar[rru]_{v}
}
\]
to the pasting diagram
\[
\xymatrix@=1em{\mathcal{E}/I\ar[rr]^{\Delta_{a}}\ar[rd]_{\Delta_{u}} & \ar@{}[d]|-{\cong} & \mathcal{E}/P\ar[rr]\ar@{}[dr]|-{\cong}\ar[rr]^{\boxtimes_{w}} &  & \mathcal{E}/B\ar[rr]^{\Sigma_{t}}\ar[rd]^{\Sigma_{k}}\ar@{}[d]|-{\Downarrow\epsilon} &  & \mathcal{E}/J\\
 & \mathcal{E}/M\ar[rr]_{\boxtimes_{q}}\ar[ru]_{\Delta_{h}} &  & \mathcal{E}/N\ar[ur]^{\Delta_{k}}\ar[rr]_{\textnormal{id}} & \; & \mathcal{E}/N\ar[ur]_{\Sigma_{v}}
}
\]
using again the $\Delta$-$\boxtimes$ Beck data of Cor. \ref{beck3}.
\end{proof}

\begin{rem}
In the usual bicategory of polynomials, one may view $\Sigma_{f}$,
$\Delta_{f}$ and $\Pi_{f}$ as basic polynomial diagrams, giving
adjoint triples $\Sigma_{f}\dashv\Delta_{f}\dashv\Pi_{f}$ \cite[Prop. 25]{WalkerPoly}.
However, within our more general $\mathbf{Poly}\left(\mathcal{E},\otimes\right)$
we have only the adjunction $\Sigma_{f}\dashv\Delta_{f}$, there is
no adjunction $\Delta_{f}\dashv\boxtimes_{f}$ as typically diagonals
fail to exist in semicartesian or subcartesian settings.
\end{rem}

\subsection{Subcartesian polynomial functors}

In order to understand subcartesian polynomial functors and their
equivalence with polynomial diagrams, we will first need to define
the canonical strength and enrichment structure. We make the observation
that for any locally subcartesian closed category $\mathcal{E}$ with
finite limits, each slice category $\mathcal{E}/X$ is canonically
enriched over $\mathcal{E}$. Similar to the situation of Gambino
and Kock \cite[Section 1.3]{gambinokock} for any $a\colon A\to X$,
$b\colon B\to X$ and $!_{X}\colon X\to\mathbf{1}$ we have a canonical
enrichment given by
\[
\mathcal{E}/X\left(a,b\right):=\boxtimes_{!_{X}}\boxtimes_{a}\Delta_{a}\left(b\right)\in\mathcal{E}
\]
In order for polynomial functors and their transformations to be well
behaved, they must respect this canonical enrichment. This enrichment
can equivalently be described in terms of strength, which is defined
as follows.
\begin{defn}
Suppose $\mathcal{A}$ and $\mathcal{B}$ are categories tensored
over a monoidal category $\left(\mathcal{V},\otimes,I\right)$. A
left strength on a functor $F\colon\mathcal{A}\to\mathcal{B}$ is
a family of maps $\textnormal{st}^{F}_{K,a}\colon K\circledast F\left(a\right)\to F\left(K\circledast a\right)$
natural over $K\in\mathcal{V}$ and $a\in\mathcal{A}$, and respecting
the left unitor and associator of $\mathcal{V}$. A natural transformation
$\alpha\colon F\Rightarrow G$ is $\mathcal{V}$-strong when it renders
commutative
\[
\xymatrix@=1em{K\circledast F\left(a\right)\ar[rrr]^{\textnormal{st}^{F}_{K,a}}\ar[d]_{K\circledast\alpha_{a}} &  &  & F\left(K\circledast a\right)\ar[d]^{\alpha_{K\circledast a}}\\
K\circledast G\left(a\right)\ar[rrr]_{\textnormal{st}^{G}_{K,a}} &  &  & G\left(K\circledast a\right)
}
\]
for all $K\in\mathcal{V}$ and $a\in\mathcal{A}$.
\end{defn}

In our situation, each slice category $\mathcal{E}/X$ is tensored
over $\mathcal{E}$ via the left adjoint
\[
K\circledast a=\Sigma_{a}\nabla_{a}\nabla_{!_{X}}\left(K\right)\in\mathcal{E}/X
\]
which is simply $a\cdot\textnormal{pr}_{A}\colon K\otimes_{\mathbf{1}}A\to A\to X$.
Moreover, respecting the enrichment is equivalent to respecting the
strength; in fact, this correspondence between strength and enrichment
yields an isomorphism of 2-categories \cite[Section 3]{strengththesis}.

We now exhibit the canonical notion of strength on the basic data
of subcartesian polynomial functors against this subcartesian tensor,
as well as strength against the usual cartesian tensor $K\times a=\Sigma_{a}\Delta_{a}\Delta_{!_{X}}\left(K\right)\in\mathcal{E}/X$.
\begin{prop}
Suppose $\left(\mathcal{E},\otimes\right)$ is a locally subcartesian
closed category with finite limits. Then the functors $\Sigma_{p}$,
$\nabla_{p}$, $\Delta_{p}$ and $\boxtimes_{p}$ admit a canonical
subcartesian strength. Moreover, $\Sigma_{p}$, $\Delta_{p}$ and
$\boxtimes_{p}$ admit a canonical cartesian strength.
\end{prop}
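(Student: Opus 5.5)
We construct each strength by computing both sides of $\textnormal{st}_{K,a}$ as explicit objects over the relevant base, and reduce every comparison to one of the Beck or distributivity comparisons already available. For the subcartesian tensor, $K\circledast a=\Sigma_{a}\nabla_{a}\nabla_{!_{X}}(K)$, which is the projection $K\otimes_{\mathbf{1}}A\to A\to X$. The key observation is that, by the binary coherence of chosen subpullbacks (Lemma \ref{spb2}), $\nabla_{!_{X}}(K)\otimes_{X}a$ is exactly $K\circledast a$. So the subcartesian tensoring over $\mathcal{E}$ is the slice tensor with the base-changed object $K_{X}:=\nabla_{!_{X}}(K)$. Similarly, $K\times a=K_{X}'\times_{X}a$ with $K_{X}'=\Delta_{!_{X}}(K)$.

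For $\Sigma_{p}$ with $p\colon X\to Y$, we need $K_{Y}\otimes_{Y}\Sigma_{p}(a)\to\Sigma_{p}(K_{X}\otimes_{X}a)$. Here we use the canonical map. Both sides live over the chosen subpullbacks of $K\to\mathbf{1}$ along $A\to Y$ and along $A\to X\to\mathbf{1}$, and these agree up to coherent isomorphism by pseudofunctoriality (Lemma \ref{affps}). In fact the strength is invertible, since it is the Frobenius-type isomorphism from Prop. \ref{beck1} applied to the subpullback square with identity top. The cartesian case is the usual pullback argument. For $\nabla_{p}$ and $\Delta_{p}$ we use strong monoidality of reindexing. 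The subcartesian strength of $\nabla_{p}$ is the composite
\[
K_{X}\otimes_{X}\nabla_{p}(b)\cong\nabla_{p}(K_{Y})\otimes_{X}\nabla_{p}(b)\cong\nabla_{p}(K_{Y}\otimes_{Y}b),
\]
using the indexed monoidal structure $\nabla_{(-)}\colon\mathcal{E}^{\textnormal{op}}\to\mathbf{MonCat}$. For $\Delta_{p}$ in the subcartesian sense, we instead factor $K_{X}\otimes_{X}\Delta_{p}(b)$ through the comparison $\nabla_{p}\Rightarrow\Delta_{p}$ (the icon inclusion) and then use that $\Delta_{p}$ preserves the cartesian structure. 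This gives a possibly non-invertible map $K_{X}\otimes_{X}\Delta_{p}(b)\to\Delta_{p}(K_{Y}\otimes_{Y}b)$. It is obtained as the unique filler into the pullback, built from the projections and the map $K_{X}\otimes_{X}\Delta_{p}b\to K_{Y}\otimes_{Y}b$ induced by Lemma \ref{spb1}. We do not claim a cartesian strength for $\nabla_{p}$, which is consistent with the statement.

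For $\boxtimes_{p}$ we take the strength to be the mate of the strengths of the left adjoint, the standard construction for right adjoints of strong functors. Concretely, we want $K_{Y}\otimes_{Y}\boxtimes_{p}(a)\to\boxtimes_{p}(K_{X}\otimes_{X}a)$. By the adjunction $\nabla_{p}\dashv\boxtimes_{p}$, this corresponds to a map
\[
\nabla_{p}(K_{Y}\otimes_{Y}\boxtimes_{p}a)\cong K_{X}\otimes_{X}\nabla_{p}\boxtimes_{p}a\xrightarrow{K_{X}\otimes\epsilon}K_{X}\otimes_{X}a,
\]
where the first isomorphism is the strength of $\nabla_{p}$ above. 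For the cartesian strength of $\boxtimes_{p}$ we transpose
\[
\nabla_{p}(K'_{Y}\times_{Y}\boxtimes_{p}a)\to K'_{X}\times_{X}a.
\]
Its first component is $\nabla_{p}$ applied to the projection followed by the comparison $\nabla_{p}\Rightarrow\Delta_{p}$ into $K'_{X}$; its second component is $\nabla_{p}$ of the other projection followed by $\epsilon$. Combining the two legs uses the universal property of the pullback $\times_{X}$, which is why cartesian strength exists here but not for $\nabla_{p}$.

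The remaining work is to check the unit and associativity axioms and naturality. Since all maps are defined as unique fillers into subpullbacks or pullbacks, or as transposes of such maps, each axiom reduces to equality of two maps into a (sub)pullback. These follow from agreement after postcomposing with the jointly monic projections, using Lemma \ref{spb2} and the pseudofunctor coherence of Lemma \ref{affps}. I expect the main obstacle to be the axioms for the $\boxtimes_{p}$ strengths, and for the cartesian one in particular. Transposing them requires the compatibility of the $\nabla_{p}$-strength with the counit and with the icon $\nabla\Rightarrow\Delta$, and this mixes chosen subpullbacks with genuine pullbacks. I would handle it by verifying everything at the level of transposes, where each side is a map into $K'_{X}\times_{X}a$ determined by its two projections.
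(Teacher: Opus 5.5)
Your proposal is correct. For $\Sigma_p$, $\nabla_p$, $\Delta_p$ and the cartesian strength of $\boxtimes_p$ it is the paper's construction:
\begin{itemize}
\item pseudofunctoriality of $\nabla$ for $\Sigma_p$;
\item Beck plus pseudofunctoriality, i.e.\ strong monoidality, for $\nabla_p$;
\item the inclusion of the subpullback into the pullback followed by the pullback Beck isomorphism for $\Delta_p$;
\item for the cartesian strength of $\boxtimes_p$, the transpose of the costrength $\nabla_p(K\times -)\to K\times\nabla_p(-)$ followed by the counit.
\end{itemize}

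The genuine difference is the subcartesian strength of $\boxtimes_p$. Write $p\colon E\to B$ and $(e,p',d)$ for the distributivity subpullback of $f$ along $p$. The paper builds the strength directly as the composite of:
\begin{itemize}
\item $\Sigma_d\eta$ for $\nabla_{p'}\dashv\boxtimes_{p'}$;
\item the isomorphism $\nabla_{p'}\nabla_d\nabla_{!_B}\cong\nabla_e\nabla_f\nabla_{!_E}$;
\item the inclusion $\nabla_e\Rightarrow\Delta_e$;
\item the distributivity isomorphism $\Sigma_d\boxtimes_{p'}\Delta_e\cong\boxtimes_p\Sigma_f$ of Cor.~\ref{beck2}.
\end{itemize}
You instead take the mate of the inverse of the invertible strength of $\nabla_p$, composed with $\nabla_{!_E}(K)\otimes_E\epsilon_f$.

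These give the same map. Transpose the paper's composite along $\nabla_p\dashv\boxtimes_p$ and use the Beck isomorphism $\nabla_p\Sigma_d\cong\Sigma_{fe}\nabla_{p'}$. The transpose of $\mathfrak{d}$ is then the counit of $\boxtimes_{p'}$ followed by the pullback projection, and the triangle identity cancels this counit against $\eta$. What remains is $\Sigma_f$ applied to $\Sigma_e\nabla_e\Rightarrow\Sigma_e\Delta_e\Rightarrow 1$, the top map of the chosen subpullback. That is exactly your $\nabla_{!_E}(K)\otimes_E\epsilon_f$.

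Your route has several advantages:
\begin{itemize}
\item it is shorter and needs neither pullbacks nor Cor.~\ref{beck2};
\item it treats both strengths on $\boxtimes_p$ uniformly;
\item it delivers the unit and associativity axioms by doctrinal adjunction, since an oplax structure on the left adjoint transposes to a lax one on the right adjoint. The paper leaves these axioms implicit.
\end{itemize}
The paper's route instead exhibits the strength through the same distributivity data later used to build the pseudofunctor $\mathbf{Poly}(\mathcal{E},\otimes)\to\mathbf{Cat}$. This makes compatibility with that construction visible.

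One small correction, in your notation $p\colon X\to Y$, $a\colon A\to X$. Invertibility of the $\Sigma_p$-strength is just the pseudofunctoriality constraint $\nabla_{pa}\cong\nabla_a\nabla_p$ that you first cite, as in the paper. It is not an instance of Prop.~\ref{beck1}: the square with an identity on top over $a$ and $pa$ is not a subpullback in general.
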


\begin{proof}
Consider maps $p\colon E\to B$, $f\colon A\to E$ and $c\colon C\to B$.
We have canonical subcartesian left strength maps for $\Sigma_{p}$
given by 
\[
K\circledast\Sigma_{p}\left(f\right)=\Sigma_{pf}\nabla_{pf}\nabla_{!_{B}}\left(K\right)\cong\Sigma_{p}\Sigma_{f}\nabla_{f}\nabla_{!_{E}}\left(K\right)=\Sigma_{p}\left(K\circledast f\right)
\]
for $\Delta_{p}$ as on the left below (where $p'$ and $c'$ are
from the pullback of $p$ and $c$)
\[
\begin{aligned}K\circledast\Delta_{p}\left(c\right) & =\Sigma_{c'}\nabla_{c'}\nabla_{!_{E}}\left(K\right) &  &  & K\circledast\boxtimes_{p}\left(f\right) & =\Sigma_{d}\nabla_{d}\nabla_{!_{B}}\left(K\right)\\
 & \cong\Sigma_{c'}\nabla_{c'}\nabla_{p}\nabla_{!_{B}}\left(K\right) &  &  &  & \overset{\eta}{\to}\Sigma_{d}\boxtimes_{p'}\nabla_{p'}\nabla_{d}\nabla_{!_{B}}\left(K\right)\\
 & \cong\Sigma_{c'}\nabla_{p'}\nabla_{c}\nabla_{!_{B}}\left(K\right) &  &  &  & \cong\Sigma_{d}\boxtimes_{p'}\nabla_{e}\nabla_{f}\nabla_{!_{E}}\left(K\right)\\
 & \overset{\textnormal{inc}}{\to}\Sigma_{c'}\Delta_{p'}\nabla_{c}\nabla_{!_{B}}\left(K\right) &  &  &  & \overset{\textnormal{inc}}{\to}\Sigma_{d}\boxtimes_{p'}\Delta_{e}\nabla_{f}\nabla_{!_{E}}\left(K\right)\\
 & \cong\Delta_{p}\Sigma_{c}\nabla_{c}\nabla_{!_{B}}\left(K\right) &  &  &  & \cong\boxtimes_{p}\Sigma_{f}\nabla_{f}\nabla_{!_{E}}\left(K\right)\\
 & =\Delta_{p}\left(K\circledast c\right) &  &  &  & =\boxtimes_{p}\left(K\circledast f\right)
\end{aligned}
\]
and the strength of $\boxtimes_{p}$ is constructed as on the right
above, where the distributivity subpullback of $f$ along $p$ is
given as in Def. \ref{defdspb}. The strength of $\nabla_{p}$ is
given as for $\Delta_{p}$ but without needing the inclusion $\nabla_{p}\hookrightarrow\Delta_{p}$
and where $p'$ and $c'$ are instead constructed from the subpullback.
Note only $\Sigma_{p}$ and $\nabla_{p}$ have invertible $\circledast$-strength
maps in general. Note also the inclusions $\nabla_{f}\Rightarrow\Delta_{f}$
are $\circledast$-strong.

It is shown by Gambino and Kock that each $\Sigma_{p}$ and $\Delta_{p}$
admit a canonical cartesian strength \cite[Section 1.3]{gambinokock}.
Unfortunately, this only gives left costrength maps $\nabla_{p}\left(K\times c\right)\to K\times\nabla_{p}\left(c\right)$
on each $\nabla_{p}$ constructed from the projection maps to $K$
and $\nabla_{p}\left(c\right)$ in the below diagram
\[
\xymatrix@=0.7em{\nabla_{p}\left(K\times_{\mathbf{1}}c\right)\ar[rr]\ar[d]\ar@{}[drr]|(.55){\textnormal{spb}} &  & K\times_{\mathbf{1}}C\ar[rr]\ar[d]\ar@{}[drr]|-{\textnormal{pb}} &  & K\ar[d]\\
\nabla_{p}\left(c\right)\ar[rr]\ar[d]\ar@{}[drr]|-{\textnormal{spb}} & \; & C\ar[d]^{c}\ar[rr] &  & \mathbf{1}\\
E\ar[rr]_{p} &  & B
}
\]
However, we have a left strength on $\boxtimes_{p}$ with components
$K\times\boxtimes_{p}\left(f\right)\to\boxtimes_{p}\left(K\times f\right)$
constructed as the adjunct of $\nabla_{p}\left(K\times\boxtimes_{p}\left(f\right)\right)\to K\times\nabla_{p}\boxtimes_{p}\left(f\right)\to K\times f$. 
\end{proof}

\begin{defn}
We call functors and transformations \emph{bunched strong} when they
respect both the cartesian and subcartesian strength.
\end{defn}

\begin{example}
The two resulting transformations of Prop. \ref{polytocat} are both
cartesian and subcartesian strong. The issue that the unit $\eta\colon1\Rightarrow\boxtimes_{j}\nabla_{j}$
is not cartesian strong is resolved upon pasting with the inclusion
$\nabla_{j}\hookrightarrow\Delta_{j}.$
\end{example}

\begin{rem}
The subcartesian and cartesian strengths defined above interact coherently
via the inclusions $\iota_{K,a}\colon K\circledast a\to K\times a$.
For any transformation $\alpha\colon F\Rightarrow G$ this gives the
equality of the outer rectangle of 
\[
\xymatrix@=1em{K\circledast F\left(a\right)\ar[rr]^{\iota_{K,F\left(a\right)}}\ar[d]_{K\circledast\alpha_{a}} &  & K\times F\left(a\right)\ar[rr]^{\textnormal{st}^{F,\times}_{K,a}}\ar[d]^{K\times\alpha_{a}} &  & F\left(K\times a\right)\ar[d]^{\alpha_{K\times a}}\\
K\circledast G\left(a\right)\ar[rr]_{\iota_{K,G\left(a\right)}} &  & K\times G\left(a\right)\ar[rr]_{\textnormal{st}^{G,\times}_{K,a}} &  & G\left(K\times a\right)
}
\]
and the outer rectangle of 
\[
\xymatrix@=1em{K\circledast F\left(a\right)\ar[rr]^{\textnormal{st}^{F,\circledast}_{K,a}}\ar[d]_{K\circledast\alpha_{a}} &  & F\left(K\circledast a\right)\ar[d]^{\alpha_{K\circledast a}}\ar[rr]^{F\left(\iota_{K,a}\right)} &  & F\left(K\times a\right)\ar[d]^{\alpha_{K\times a}}\\
K\circledast G\left(a\right)\ar[rr]_{\textnormal{st}^{G,\circledast}_{K,a}} &  & G\left(K\circledast a\right)\ar[rr]_{G\left(\iota_{K,a}\right)} &  & G\left(K\times a\right)
}
\]
It follows that if $G$ preserves monomorphisms, then any transformation
$\alpha\colon F\Rightarrow G$ that respects cartesian strength also
respects subcartesian strength. This is true if $G$ has the form
$\Sigma_{t}\boxtimes_{p}\Delta_{s}$ since the right adjoints $\boxtimes_{p}$
and $\Delta_{s}$ preserve limits and $\Sigma_{t}$ preserves connected
limits. 
\end{rem}

\begin{rem}
Since each slice is locally subcartesian closed, one has a strength
$\circledast_{\mathcal{E}/B}$ taking any $K\colon K_{0}\to B$ in
$\mathcal{E}/B$ and $a\colon A\to X$ in $\left(\mathcal{E}/B\right)/x\cong\mathcal{E}/X$
for any $x\colon X\to B$ to $K_{0}\otimes_{B}A\to A\to X$ again
an object of $\left(\mathcal{E}/B\right)/x\cong\mathcal{E}/X$.
\end{rem}

We now consider the (replete) image of the pseudofunctor of Prop.
\ref{polytocat} on strong functors and transformations given by the
following data.
\begin{defn}
A \emph{subcartesian polynomial functor} is a functor $P\colon\mathcal{E}/I\to\mathcal{E}/J$
which is bunched strong, and isomorphic to a functor of the form $\Sigma_{t}\boxtimes_{p}\Delta_{s}$
for some triple $\text{\ensuremath{\left(s,p,t\right)}}$ via a cartesian
strong transformation.
\end{defn}

We can now state what the biequivalence of polynomials and polynomial
functors should be in this more general subcartesian setting. We warn
the reader that (perhaps confusingly) cartesian natural transformations
of polynomial functors correspond to 2-cells of polynomial diagrams
whose vertical morphism is invertible, and hence subcartesian 2-cells\footnote{To explain this, we remind the reader that the local subcartesian
structure lies on $\mathcal{E}$ not on $\mathbf{Cat}$, thus we have
no natural notion of ``subcartesian natural transformations''.}.
\begin{thm}
\label{mainbiequiv} For any locally subcartesian closed category
with finite limits $\left(\mathcal{E},\otimes\right)$ the bicategory
of polynomials $\mathbf{Poly}\left(\mathcal{E},\otimes\right)$ is
in biequivalence with the 2-category of subcartesian polynomial functors
$\mathbf{PolyFun}\left(\mathcal{E},\otimes\right)$ and cartesian
strong natural transformations. Moreover, under this equivalence subcartesian
2-cells correspond to cartesian natural transformations.
\end{thm}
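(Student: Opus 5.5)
The plan follows the shape of Gambino and Kock's proof in the cartesian setting. The pseudofunctor of Prop. \ref{polytocat}, $(s,p,t)\mapsto\Sigma_{t}\boxtimes_{p}\Delta_{s}$, is already available and lands in $\mathbf{PolyFun}(\mathcal{E},\otimes)$ once we check the 2-cells. By the example following the definition of bunched strong, the images of the two basic kinds of 2-cell (vertical ones and subcartesian ones) are cartesian strong, and also subcartesian strong. Since every polynomial 2-cell factors as a vertical morphism followed by a subcartesian one, all images of 2-cells are cartesian strong. The pseudofunctor is essentially surjective on objects by the definition of subcartesian polynomial functor, which asks for a cartesian strong isomorphism to some $\Sigma_{t}\boxtimes_{p}\Delta_{s}$. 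It therefore remains to show local equivalence: for each pair of polynomials $(s,p,t)$ and $(u,q,v)$ from $I$ to $J$, the map from isomorphism classes of 2-cells to cartesian strong transformations $\Sigma_{t}\boxtimes_{p}\Delta_{s}\Rightarrow\Sigma_{v}\boxtimes_{q}\Delta_{u}$ should be a bijection.

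For local full faithfulness I would use a Yoneda-style reduction through strength. Cartesian strength lets us write $F(a)$ using $F$ applied to generic elements. More concretely, a cartesian strong $\alpha$ is determined by its component at the terminal-type object $\textnormal{id}$ pulled back along $s$, namely at $s\colon E\to I$ viewed via $\Delta_{s}$. Here $\boxtimes_{p}\Delta_{s}(s)$ recovers $p$, because $\boxtimes_{p}$ preserves the identity. So $\alpha$ gives a map $B\to\Sigma_{v}\boxtimes_{q}\Delta_{u}(\ldots)$ over $J$, which factors as $k\colon B\to N$ over $J$ together with a map $\nabla_{q}$-adjunct. Transposing along $\nabla_{q}\dashv\boxtimes_{q}$ yields a map out of the chosen subpullback $P=M\otimes_{N}B$, producing $h\colon P\to M$ and $P\to E$ compatible with $s$ and $u$. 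This is exactly the data of a general 2-cell in Prop. \ref{polycomp}.

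The key step is that $\alpha$ is recovered from this data. The plan is to write each $a\in\mathcal{E}/I$ through strength, as $K\times(\cdot)$ applied to generic data, with the cartesian tensor $K\times a=\Sigma_{a}\Delta_{a}\Delta_{!}(K)$. Naturality and cartesian strength then force $\alpha_{a}$ to agree with the pasting assigned in Prop. \ref{polytocat}, using the $\Delta$-$\boxtimes$ Beck data of Cor. \ref{beck3} and the distributivity data of Cor. \ref{beck2}. Uniqueness of fillers into subpullbacks, from the subproduct property of the slices, gives injectivity; existence comes from the construction above. I expect this to be the main obstacle. Gambino and Kock's argument uses the cartesian diagonal and the adjunction $\Delta\dashv\Pi$ to reconstruct components, and here $\boxtimes_{q}$ is only right adjoint to $\nabla_{q}$, not to $\Delta_{q}$. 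My first attempt is to work with the enrichment $\boxtimes_{!_{X}}\boxtimes_{a}\Delta_{a}(b)$, so that strength becomes $\mathcal{E}$-enrichment, and to apply the enriched Yoneda lemma to the representable description $\boxtimes_{p}\Delta_{s}(a)\cong$ the object of maps from $E$ into $a$ over $I$, fibred over $B$. The point is that $\boxtimes_{p}\Delta_{s}$ is a \emph{subcartesian} representable, and that transformations between such representables are induced by maps between the representing subpullback objects.

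For the final clause, I would check both directions. If the vertical part $j$ of the 2-cell is invertible, the unit and inclusion cells in the pasting become identities, so the image consists of Beck isomorphisms composed with the counit $\epsilon$ of $\Sigma_{k}\dashv\Delta_{k}$, which is cartesian. Conversely, if $\alpha$ is cartesian, I would evaluate the naturality square at the map $s\to\textnormal{id}_{I}$. Pulling back shows that the recovered comparison $P\to E$ is an isomorphism, and Lemma \ref{spb1} identifies the relevant square as a subpullback.
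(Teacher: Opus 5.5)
You have a sensible overall shape: apply Prop.~\ref{polytocat}, get essential surjectivity from the definition, reduce to a local bijection, and extract $k\colon B\to N$ and $j\colon P\to E$ from a generic element. Handling $\Sigma_t$ directly through $k$ is a legitimate alternative to the paper's evaluation-fibration reduction.

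There is, however, a genuine gap where you say the extracted data ``is exactly the data of a general 2-cell''. A 2-cell of $\mathbf{Poly}(\mathcal{E},\otimes)$ also needs its upper square to commute: $p\circ j$ must equal the subpullback projection $P\to B$. Your construction only yields $vk=t$ and $sj=uh$. Nothing in your argument forces $j$ to lie over $B$, and neither does the subcartesian-enriched Yoneda lemma you propose as the main tool. The ``fibred over $B$'' in your representable description is exactly what the $\mathcal{E}$-enrichment $\boxtimes_{!}\boxtimes_{s}\Delta_{s}$ cannot see.

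The paper's example shows this is a real obstruction. The map $\alpha_X(a,b,x)=(b,a,(ab)x)$ on $\mathbb{A}^{\#2}\times X$ is natural, invertible (hence cartesian) and subcartesian strong. Running your generic-element extraction on it gives $k=j=$ the swap, so $pj\neq\pi_B$, since $p=\textnormal{id}$. Naturality plus subcartesian enrichment therefore ``reconstruct'' a 2-cell that does not exist. Your next step, forcing $\alpha_a$ to agree with the pasting of Prop.~\ref{polytocat}, cannot even be stated, because that pasting is only defined for genuine 2-cells. The same applies to your converse for the cartesian clause. In this example $j$ is an isomorphism, yet there is no 2-cell.

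A smaller slip: $\boxtimes_{p}\Delta_{s}(s)$ does not recover $p$. What holds is $\boxtimes_{p}\Delta_{s}(1_I)=1_B$, and the generic element is $\boxtimes_{p}(\delta)\colon 1_B\to\boxtimes_{p}\Delta_{s}(s)$ for the diagonal $\delta\colon 1_E\to\Delta_{s}(s)$.

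The missing idea is that \emph{cartesian} strength, together with the cartesian diagonal of $B$, is what pins down compatibility with $p$. That diagonal exists even though $\otimes$ has no diagonals. The obstacle you flagged, the lack of $\Delta\dashv\boxtimes$, is not where the difficulty lies: Yoneda for the subcartesian enrichment works fine for recovering $j$ over $I$.

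The paper's argument runs as follows.
\begin{enumerate}
\item It reduces to a vertical $\alpha\colon\boxtimes_{p'}\Delta_{s'}\Rightarrow\boxtimes_{p}\Delta_{s}$.
\item Composing with $\boxtimes_{!_B}$ and applying that Yoneda embedding gives $j$ with $s=s'j$.
\item It takes the cartesian-strength square for $K=B$ at $1_I$, identifying $B\times 1_I$ with $\pi_I\colon B\times I\to I$.
\item It precomposes with $\delta_B\colon B\to B\times B$ and transposes. The two sides then correspond to $(p,s)$ and $(p',s')j$ as maps $s\to\pi_I$ in $\mathcal{E}/I$, whence $p=p'j$.
\end{enumerate}
You need an argument of this kind, invoking cartesian strength specifically for $p$-compatibility, before the rest of your plan goes through.
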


\begin{proof}
We have already shown in Prop. \ref{polytocat} that we have a canonical
pseudofunctor $\mathbf{Poly}\left(\mathcal{E},\otimes\right)\to\mathbf{Cat}$.
Note also that the terminal objects of slice categories (identity
morphisms) are preserved by operations $\Delta$ and $\boxtimes$.
The analogue of \cite[Lemma 2.2]{gambinokock} holds for subcartesian
polynomial functors (where dependent products are replaced with dependent
subproducts) by forming the subpullback, and applying the Beck condition
of Cor. \ref{beck3}. It follows that the evaluation fibration of
\cite[Prop. 2.4]{gambinokock} (with subcartesian polynomial functors)
allows factoring transformations of subcartesian polynomials through
the fibration's cartesian lifts, thereby eliminating $\Sigma_{t}$
expressions (this is equivalent to working fibrewise) so that it suffices
to work with vertical transformations. Moreover, both factors remain
bunched strong: the lift is bunched strong by the preceding polynomial
description, and the vertical factor inherits the cartesian and subcartesian
strength conditions through the ordinary pullback defining the evaluation's
cartesian lift.

The subcartesian version of \cite[Lemma 2.6]{gambinokock} is again
an enriched Yoneda embedding, and the fact that it is fully faithful
means (with respect to subcartesian strength)
\[
\mathcal{E}/I\left(s,s'\right)\cong\mathbf{Cat}^{\mathcal{E}}_{\textnormal{str}}\left(\boxtimes_{!}\boxtimes_{s'}\Delta_{s'},\boxtimes_{!}\boxtimes_{s}\Delta_{s}\right).
\]
The more complex part is the analogue of \cite[Prop 2.8]{gambinokock}
which we now explain. Since we have already defined the pseudofunctor,
it remains to check any $\alpha\colon\boxtimes_{p'}\Delta_{s'}\Rightarrow\boxtimes_{p}\Delta_{s}$
cartesian strong (and thus also subcartesian strong) comes from a
subcartesian vertical polynomial 2-cell. Firstly we compose with $\boxtimes_{!_{B}}$
and apply Yoneda to recover $j\colon s\to s'$ with $s=s'j$, so it
remains to check that $p'j=p$. To see this, first note that by adjointness
we have the commuting square at $\pi_{I}\colon B\times I\to I$
\[
\xymatrix@=1em{\left(\mathcal{E}/B\right)\left(1_{B},\boxtimes_{p'}\Delta_{s'}\left(\pi_{I}\right)\right)\ar[d]_{\textnormal{adj}}\ar[rrr]^{\alpha_{\pi_{I}}\circ\left(-\right)} &  &  & \left(\mathcal{E}/B\right)\left(1_{B},\boxtimes_{p}\Delta_{s}\left(\pi_{I}\right)\right)\ar[d]^{\textnormal{adj}}\\
\left(\mathcal{E}/I\right)\left(s',\pi_{I}\right)\ar[rrr]_{\left(-\right)\circ j} &  &  & \left(\mathcal{E}/I\right)\left(s,\pi_{I}\right)
}
\]
Moreover, since $\alpha$ respects cartesian strength we have the
left square below
\[
\xymatrix@=1em{B\times\boxtimes_{p'}\Delta_{s'}\left(1_{I}\right)\ar[rr]^{\textnormal{st}^{p',s'}_{B,1_{I}}}\ar[d]_{B\times\alpha_{1_{I}}} &  & \boxtimes_{p'}\Delta_{s'}\left(B\times1_{I}\right)\ar[d]^{\alpha_{B\times1_{I}}} & \gamma_{2}\ar[rr]^{\textnormal{st}^{p',s'}_{B,1_{I}}\quad\quad}\ar@{=}[d] &  & \boxtimes_{p'}\Delta_{s'}\left(\pi_{I}\right)\ar[d]^{\alpha_{\pi_{I}}}\\
B\times\boxtimes_{p}\Delta_{s}\left(1_{I}\right)\ar[rr]_{\textnormal{st}^{p,s}_{B,1_{I}}} &  & \boxtimes_{p}\Delta_{s}\left(B\times1_{I}\right) & \gamma_{2}\ar[rr]_{\textnormal{st}^{p,s}_{B,1_{I}}\quad\quad} &  & \boxtimes_{p}\Delta_{s}\left(\pi_{I}\right)
}
\]
simplifying to the right above, where $\gamma_{2}\colon B\times B\to B$
is the right projection. By precomposing this square with the diagonal
$\delta_{B}\colon B\to B\times B$ over $B$ and taking adjuncts we
see $\textnormal{st}^{p,s}_{B,1_{I}}\delta_{B}$ corresponds to $\left(p,s\right)\colon s\to\pi_{I}$
which is equal to $\alpha_{\pi_{I}}\textnormal{st}^{p',s'}_{B,1_{I}}\delta_{B}$
corresponding to $\left(p',s'\right)j\colon s\to\pi_{I}$ and so $p=p'j$.
Finally the analogue of \cite[Prop 2.9]{gambinokock} holds by the
same argument as in the usual cartesian setting: under this Yoneda
correspondence between $\alpha$ and $j$ (which here requires bunched
strong transformations) the underlying natural transformation $\alpha$
is cartesian precisely when its representing map $j$ is invertible. 
\end{proof}

\section{Dependent separated products in nominal sets\label{nom}}

In this section we first choose a fixed countably infinite set $\mathbb{A}$,
whose elements we call atoms. This choice is not important, since
any such $\mathbb{A}$ will be isomorphic to $\mathbb{N}$ resulting
in an equivalent category of nominal sets. We will denote by $\Perm{\left(\mathbb{A}\right)}$
the group of finite permutations of these atoms. We will now recall
the basic definitions of nominal sets \cite{nominal}, show the category
of them admits locally subcartesian closed structure, and then consider
a simple polynomial example.
\begin{defn}
A \emph{nominal set} consists of a set $X$ along with a group action
\[
\Perm{\left(\mathbb{A}\right)}\times X\to X\colon\left(\pi,x\right)\mapsto\pi\cdot x
\]
 satisfying the following finite support property: for every $x\in X$,
there exists a finite subset $S\subseteq\mathbb{A}$ such that for
all $\pi\in\Perm\left(\mathbb{A}\right)$ we have
\[
\bigl(\forall a\in S,\ \pi(a)=a\bigr)\Longrightarrow\pi\cdot x=x.
\]
The intersection of all such finite supports $S$ is called the \emph{least
support} of $x$ and is denoted by $\supp\left(x\right)$.
\end{defn}

\begin{defn}
A morphism of nominal sets $X\to Y$ is an equivariant map, meaning
a function $f\colon X\to Y$ such that for all $\pi\in\Perm\left(\mathbb{A}\right)$
and $x\in X$ we have $f\left(\pi\cdot x\right)=\pi\cdot f\left(x\right)$.
\end{defn}

\begin{rem}
Such equivariant maps $f\colon X\to Y$ have the basic property that
$\supp\left(f\left(x\right)\right)\subseteq\supp\left(x\right)$ for
all $x\in X$.
\end{rem}

To exhibit a locally subcartesian closed structure on $\mathbf{Nom}$
we must first define our choice of subpullbacks. One may first try
using the subobject of the pullback given by just taking those pairs
which are separated, however this fails to satisfy the required coherence
axioms. This issue is addressed by taking into account the finite
support structure on the codomain nominal set. In fact, this construction
has already appeared under the name of relative separation \cite{SimpsonInd,SimpsonProbSheaf},
though the affine base change view was not made explicit.
\begin{defn}
The separated pullback $A*_{X}B$ of a pair of maps $f\colon A\to X$
and $g\colon B\to X$ is given by 
\[
\sum_{x\in X}\left\{ \left(a,b\right)\in A_{x}\times B_{x}\colon\supp\left(a\right)\cap\supp\left(b\right)=\supp\left(x\right)\right\} 
\]
where $A_{x}$ and $B_{x}$ are the fibers over $x\in X$. 
\end{defn}

\begin{rem}
It is worth noting that there is another equivalent presentation of
the separated pullback 
\[
\sum_{x\in X}\left\{ \left(a,b\right)\in A_{x}\times B_{x}\colon\left(\supp\left(a\right)\setminus\supp\left(x\right)\right)\cap\left(\supp\left(b\right)\setminus\supp\left(x\right)\right)=\emptyset\right\} 
\]
based on the view that $X$ contains ``old atoms'' and only requiring
that the ``new atoms'' of $A$ and $B$ are separated. The nominal
structure is inherited from that of the product and given by $\supp\left(a,b\right)=\supp\left(a\right)\cup\supp\left(b\right)$
because the action is given component-wise. The reader will also notice
that any injective equivariant map exactly reflects support, and so
gives no new supporting atoms, thus any separated pullback along such
a map collapses to the pullback.
\end{rem}

\begin{rem}
In constructive versions of nominal sets (where least supports need
not exist) the support conditions are better written in freshness
form. In the slice presentation, the relative support condition $\supp\left(a\right)\cap\supp\left(b\right)=\supp\left(x\right)$
is expressed by the condition $\forall n\in\mathbb{A}\;\left(n\#x\Rightarrow n\#a\vee n\#b\right)$.
Under the equivalence with the families presentation $\mathbf{Nom}\left(X\right)\simeq\mathbf{Nom}/X$
\cite[Section 3]{depab} this is written $\forall n\in\mathbb{A}\;\left(n\#x\Rightarrow n\#\left(x,a\right)\vee n\#\left(x,b\right)\right)$
where $\left(x,a\right)$ and $\left(x,b\right)$ are the corresponding
total elements. Thus only atoms not already present in the context
$x$ are required to be separated.
\end{rem}

\begin{rem}
The separated pullback over the terminal nominal set $X=\mathbf{1}$
is the separated product $\left\{ \left(a,b\right)\in A\times B\colon\supp\left(a\right)\cap\supp\left(b\right)=\emptyset\right\} $
since the terminal nominal set has empty support.
\end{rem}

\begin{lem}
The separated pullback defines a coherent class of subpullbacks, and
thus an affine base change $\nabla_{\left(-\right)}\colon\mathbf{Nom}^{\textnormal{op}}\to\mathbf{Cat}$.
\end{lem}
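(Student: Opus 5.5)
We have to check the conditions of Definition \ref{locsemidef} for the separated pullback, and then invoke Lemma \ref{affps} to obtain the pseudofunctor $\nabla_{(-)}$. The chosen square has vertex $A*_{X}B$ with the restricted projections $(a,b)\mapsto a$ and $(a,b)\mapsto b$. The steps are: (i) $A*_{X}B$ is a nominal subset of the pullback $A\times_{X}B$; (ii) the square is a subpullback; (iii) symmetry; (iv) the nullary composite condition; (v) the binary composite condition. The last is the only real content.

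For (i), equivariance gives $\supp(\pi\cdot a)=\pi\supp(a)$ for every element and every permutation $\pi$. So the condition $\supp(a)\cap\supp(b)=\supp(x)$ is stable under the action, and $\pi$ sends the fibre over $x$ to the fibre over $\pi\cdot x$. Finite support of pairs is inherited from the product, with $\supp(a,b)=\supp(a)\cup\supp(b)$. Hence $A*_{X}B\hookrightarrow A\times_{X}B$ is a monomorphism, and by the proposition identifying sublimits with monomorphisms into the limit, (ii) follows. Symmetry (iii) is immediate because the defining condition is symmetric in $a$ and $b$. For (iv), suppose $g=\textnormal{id}_{X}$, so that $b=x=f(a)$. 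Then $\supp(x)\subseteq\supp(a)$, hence $\supp(a)\cap\supp(x)=\supp(x)$ and every pair is separated. The map $a\mapsto(a,f(a))$ is then an isomorphism onto $A*_{X}X$. Since the paper allows normalising the choice strictly, we choose $A*_{X}X=A$ with $g'=\textnormal{id}$.

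For (v), take $p\colon X\to Y$, $q\colon Y\to Z$ and $g\colon B\to Z$. An element of $X*_{Y}(Y*_{Z}B)$ is a triple $(x,y,b)$ with $y=p(x)$ and $q(y)=g(b)=:z$, subject to two conditions:
\[
\supp(y)\cap\supp(b)=\supp(z)\quad\text{and}\quad\supp(x)\cap\bigl(\supp(y)\cup\supp(b)\bigr)=\supp(y).
\]
Since $y$ is determined by $x$, such a triple is the same as a pair $(x,b)$ in the pullback of $qp$ and $g$. We must show that the two conditions together are equivalent to $\supp(x)\cap\supp(b)=\supp(z)$.

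This is the key step, and it is a set computation using only the inclusions $\supp(z)\subseteq\supp(y)\subseteq\supp(x)$ and $\supp(z)\subseteq\supp(b)$. Write $S_{x},S_{y},S_{z},S_{b}$ for these supports.

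Forward direction. Because $S_{y}\subseteq S_{x}$, the second condition reduces to $S_{x}\cap S_{b}\subseteq S_{y}$. Then
\[
S_{x}\cap S_{b}=S_{x}\cap S_{b}\cap S_{y}=S_{y}\cap S_{b}=S_{z}.
\]

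Converse. Assume $S_{x}\cap S_{b}=S_{z}$. Then $S_{y}\cap S_{b}\subseteq S_{x}\cap S_{b}=S_{z}$, and $S_{z}\subseteq S_{y}\cap S_{b}$ holds anyway, so the first condition holds. Also $S_{x}\cap S_{b}=S_{z}\subseteq S_{y}$, which is the reduced form of the second condition.

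Thus the outside square of \eqref{binarysubcartesian} is canonically isomorphic to the chosen separated pullback of $qp$ and $g$, via $(x,(y,b))\mapsto(x,b)$, and this isomorphism is compatible with the projections. Coherence of these isomorphisms is automatic by uniqueness of fillers, since all squares involved are subpullbacks.

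I expect step (v) to be the main obstacle. It is exactly where the naive choice $S_{a}\cap S_{b}=\emptyset$ fails, and the relative condition is what makes the computation close. With the conditions verified, Lemma \ref{affps} yields $\nabla_{(-)}\colon\mathbf{Nom}^{\textnormal{op}}\to\mathbf{Cat}$.
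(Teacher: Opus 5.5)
Your proposal is correct and follows the paper's proof. Like the paper, you check symmetry and the identity case, and you prove the binary case with the same set computation: the two iterated separation conditions are equivalent to $\supp(x)\cap\supp(b)=\supp(z)$, using only $\supp(z)\subseteq\supp(y)\subseteq\supp(x)$ and $\supp(z)\subseteq\supp(b)$. You also make explicit that $A*_{X}B$ is an equivariant subset of the pullback, and hence a subpullback, which the paper leaves implicit.
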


\begin{proof}
Clearly the separated pullback is symmetric. Taking $f$ to be the
identity on $X$ this collapses to 
\[
\sum_{x\in X}\left\{ \left(x,b\right)\in\left\{ x\right\} \times B_{x}\colon\supp\left(x\right)\cap\supp\left(b\right)=\supp\left(x\right)\right\} \cong\sum_{x\in X}B_{x}\cong B
\]
so that identity maps are stable. Now consider 
\[
\xymatrix@=1em{C*_{A}\left(A*_{X}B\right)\ar[d]_{g''}\myar{p'}{rr} &  & A*_{X}B\ar[d]_{g'}\myar{f'}{rr} &  & B\ar[d]^{g}\\
C\ar[rr]_{p} &  & A\ar[rr]_{f} &  & X
}
\]
Any general element of $C*_{A}\left(A*_{X}B\right)$ namely $\left(c,a,b\right)$
has the properties that $pc=a$ and $fa=gb=x$ along with the two
separateness conditions 
\[
\supp\left(a\right)\cap\supp\left(b\right)=\supp\left(x\right),\quad\supp\left(c\right)\cap\supp\left(a,b\right)=\supp\left(a\right)
\]
The equivariance of $f$ and $p$ gives $\supp\left(x\right)\subseteq\supp\left(a\right)\subseteq\supp\left(c\right)$
and from the product nominal structure we have $\supp\left(b\right)\subseteq\supp\left(a,b\right)=\supp\left(a\right)\cup\supp\left(b\right)$.
It then follows that
\[
\begin{aligned}\supp\left(c\right)\cap\supp\left(b\right) & =\left(\supp\left(c\right)\cap\supp\left(a,b\right)\right)\cap\supp\left(b\right)\\
 & =\supp\left(a\right)\cap\supp\left(b\right)\\
 & =\supp\left(x\right)
\end{aligned}
\]
Conversely, $\supp\left(c\right)\cap\supp\left(b\right)=\supp\left(x\right)$
gives $\supp\left(a\right)\cap\supp\left(b\right)\subseteq\supp\left(x\right)$
which is an equality by equivariance of $f$ and $g$ and so 
\[
\begin{aligned}\supp\left(c\right)\cap\supp\left(a,b\right) & =\supp\left(c\right)\cap\left(\supp\left(a\right)\cup\supp\left(b\right)\right)\\
 & =\left(\supp\left(c\right)\cap\supp\left(a\right)\right)\cup\left(\supp\left(c\right)\cap\supp\left(b\right)\right)\\
 & =\supp\left(a\right)\cup\supp\left(x\right)\\
 & =\supp\left(a\right)
\end{aligned}
\]
thus the double subpullback may be identified with triples $\left(c,pc,b\right)$
such that $\supp\left(c\right)\cap\supp\left(b\right)=\supp\left(x\right)$
and the component $pc$ forgotten, which is the desired outer subpullback.
\end{proof}

The operation of separated pullback $\nabla_{p}$ along any map $p\colon E\to B$
has the following right adjoint $\boxtimes_{p}$. As with separated
pullbacks, we must take into account the support of base points. 
\begin{defn}
Let $p\colon E\to B$ and $f\colon A\to E$ be morphisms of nominal
sets. The \emph{dependent separated product} $\boxtimes_{p}\left(f\right)$
of $f$ along $p$ is the set of those pairs
\[
\left(b,s_{b}\colon E_{b}\rightharpoonup_{E}A\right)
\]
where $b\in B$ and $s_{b}$ is a partial section of $f$, equipped
with the group action $\pi\star\left(b,s_{b}\right):=\left(\pi b,\pi\ast s_{b}\right)$
where $\pi\ast s_{b}$ is the partial section of $f$ over $E_{\pi b}$
defined only for $e\in\pi\left(\dom{s_{b}}\right)$ by $\left(\pi\ast s_{b}\right)\left(e\right):=\pi\left(s_{b}\left(\pi^{-1}e\right)\right)$,
such that simultaneously:
\begin{enumerate}
\item $\left(b,s_{b}\right)$ has finite support;
\item the domain of $s_{b}$ is precisely
\[
\left\{ e\in E_{b}\colon\supp\left(e\right)\cap\supp\left(b,s_{b}\right)=\supp\left(b\right)\right\} ;
\]
\item we have
\[
\supp\left(b,s_{b}\right)\setminus\supp\left(b\right)=\bigcup_{e\in\dom{s_{b}}}\left(\supp\left(s_{b}\left(e\right)\right)\setminus\supp\left(e\right)\right);
\]
\end{enumerate}
with the equivariant projection $\left(b,s_{b}\right)\mapsto b$ to
$B$.
\end{defn}

\begin{rem}
The axiom (2) is the main difference from the dependent product: namely
we only allow $e\in E_{b}$ whose new atoms are separated from the
section. This forces us to use partial sections, which then requires
(3) to establish canonicity.
\end{rem}

It is worth noting here that the separated product is well known to
be an instance of Day convolution, and thus gives a closed monoidal
structure \cite{dayconvolution}. This is actually true of separated
pullbacks as well but in a fibred sense, which gives an abstract way
to see that the above construction $\boxtimes_{p}$ is right adjoint
to $\nabla_{p}$. Unfortunately, the theory of fibred convolutions
is far too large to detail here, and so we will give a sketch proof
of this adjunction instead. This proof is largely similar to that
of the adjointness of separated product and wand \cite[Theorem 3.13]{nominal}.
\begin{prop}
The dependent separated product $\boxtimes_{p}$ is right adjoint
to separated pullback $\nabla_{p}$ for every $p\colon E\to B$.
\end{prop}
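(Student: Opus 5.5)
I will establish a natural bijection
\[
\mathbf{Nom}/E\left(\nabla_{p}\left(g\right),f\right)\cong\mathbf{Nom}/B\left(g,\boxtimes_{p}\left(f\right)\right)
\]
for $g\colon C\to B$ and $f\colon A\to E$, working in elements. The approach follows the currying argument for the separated product and wand \cite[Theorem 3.13]{nominal}, relativised to the base $B$.

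First I will check that $\boxtimes_{p}\left(f\right)$ is a well defined nominal set over $B$. The action $\pi\star\left(b,s_{b}\right)$ must preserve conditions (1)--(3). This holds because $\supp\left(\pi\cdot z\right)=\pi\left(\supp z\right)$ for every element $z$, and conditions (2) and (3) are stated purely in terms of supports. The projection to $B$ is then evidently equivariant.

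Next I will define the currying map. Given an equivariant $\phi\colon E*_{B}C\to A$ over $E$, send $c\in C$ with $b=g\left(c\right)$ to $\left(b,s_{c}\right)$. Here $s_{c}\left(e\right):=\phi\left(e,c\right)$, defined exactly on those $e\in E_{b}$ with $\supp\left(e\right)\cap\supp\left(c\right)=\supp\left(b\right)$. The main obstacle is that condition (2) requires the domain to be cut out by $\supp\left(b,s_{c}\right)$, not by $\supp\left(c\right)$. So I must show the key claim
\[
\supp\left(b,s_{c}\right)\setminus\supp\left(b\right)=\bigcup_{e\in\dom s_{c}}\left(\supp\left(\phi\left(e,c\right)\right)\setminus\supp\left(e\right)\right)\subseteq\supp\left(c\right),
\]
together with the fact that replacing $\supp\left(c\right)$ by $\supp\left(b,s_{c}\right)$ does not change the set of admissible $e$.

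For this key claim I will try the following. The inclusion $\supp\left(b,s_{c}\right)\subseteq\supp\left(c\right)$ holds by equivariance of $c\mapsto\left(b,s_{c}\right)$. Conversely, an atom $n\in\supp\left(c\right)\setminus\supp\left(b,s_{c}\right)$ can be swapped with a fresh atom $m$ by a transposition fixing $\left(b,s_{c}\right)$, which lets me enlarge the domain. This is the standard freshness (``some/any'') argument. It will also establish the union formula (3) and finite support (1). I expect this step to be the delicate one, since partial domains interact with supports nontrivially.

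Then I will define the inverse. Given an equivariant $\psi\colon C\to\boxtimes_{p}\left(f\right)$ over $B$, set $\phi\left(e,c\right):=s_{\psi\left(c\right)}\left(e\right)$ for $\left(e,c\right)\in E*_{B}C$. This is defined because $\supp\psi\left(c\right)\subseteq\supp\left(c\right)$, so separation of $e$ from $c$ relative to $b$ implies separation from $\psi\left(c\right)$. It is equivariant by the definition of $\star$, and it lies over $E$ since $s$ is a partial section of $f$.

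Mutual inverseness is immediate in one direction. In the other, $\left(b,s_{\psi\left(c\right)}\right)$ and $\left(b,s_{c}\right)$ agree on the common domain. Both domains are determined by (2), and the supports agree by (3), so both data coincide; this is precisely why (3) is imposed as a canonicity condition. Finally, naturality in $g$ and $f$ is a routine check by composition.
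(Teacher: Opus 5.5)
There is a genuine gap in the currying direction. The map you define does not land in $\boxtimes_{p}(f)$, and your ``key claim'' is false for it.

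Suppose $s_c$ is defined exactly on the separated fibre $D_c=\{e\in E_b:\supp(e)\cap\supp(c)=\supp(b)\}$. Then condition (2) does hold: $\supp(b,s_c)$ contains the support of the domain $D_c$, so a freshening argument like your transposition shows the admissible set is unchanged. But condition (3) fails. Take $B=\mathbf{1}$, $E=C=\mathbb{A}$ and $f=\textnormal{id}_{\mathbb{A}}$, so that $\phi(e,c)=e$ is the only map over $E$. By (3), every element of $\boxtimes_{p}(\textnormal{id}_{E})$ has $\supp(b,s_b)=\supp(b)$, and hence by (2) has total domain. So $\boxtimes_{p}(\textnormal{id}_{E})\cong B$, and the transpose of $\phi$ must send $c$ to the total identity section. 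Your $s_c$ is instead the identity on $\mathbb{A}\setminus\{c\}$: its support is $\{c\}$, while the union in (3) is empty.

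The same example breaks your inverse check. Uncurrying $\psi$ and recurrying returns the restriction of $s_{\psi(c)}$ to $D_c$, not $s_{\psi(c)}$ itself, so ``the supports agree by (3)'' assumes exactly what fails. The plan is also internally inconsistent: you cannot both fix the domain to be $D_c$ and then ``enlarge the domain'' by a transposition. The enlargement has to be part of the definition.

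What is missing is the extension step that the paper carries out with the freshness theorem: $e\mapsto\phi(e,c)$ on $D_c$ must be extended to a generally larger domain.
\begin{itemize}
\item Put $T=\supp(b)\cup\bigcup_{e\in D_c}(\supp\phi(e,c)\setminus\supp(e))$, which lies inside $\supp(c)$.
\item Let the domain be $\{e\in E_b:\supp(e)\cap T=\supp(b)\}$, which contains $D_c$.
\item For such $e$, set $s_c(e)=\phi(e,\pi c)$ for any $\pi$ fixing $T$ pointwise with $\supp(e)\cap\supp(\pi c)=\supp(b)$. That is, freshen $c$ over $b$ away from $e$.
\end{itemize}
Independence of $\pi$ comes from equivariance of $\phi$. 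Two admissible freshenings of $c$ are related by a permutation fixing $\supp(e)\cup T$ pointwise. That permutation fixes the value, because $\supp\phi(e',c')\subseteq\supp(e')\cup T$ whenever $c'$ is a $T$-fixing translate of $c$ and $e'\in D_{c'}$. One then checks that $\supp(b,s_c)=T$, which gives (2) and (3) together.

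Once the currying is corrected this way, the rest of your outline is sound. Your uncurrying direction is the paper's. Your uniqueness argument is exactly the paper's ``uniqueness forced by conditions (2) and (3)'': two elements of $\boxtimes_{p}(f)$ with supports inside $\supp(c)$ that agree on $D_c$ must coincide, first in support via (3), then in domain via (2), then everywhere by freshening.
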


\begin{proof}
For any $f\colon A\to E$ the first condition ensures elements of
$\boxtimes_{p}\left(f\right)$ have finite support. Moreover, the
displayed nominal action preserves the defining conditions. Since
the projection is equivariant it follows $\boxtimes_{p}\left(f\right)\to B$
is an object of $\mathbf{Nom}/B$. Given $h\colon X\to\boxtimes_{p}\left(f\right)$
over $B$ write $h\left(x\right)=\left(qx,s_{x}\right).$ The uncurried
map
\[
\widetilde{h}\colon E*_{B}X\to A,\qquad\widetilde{h}\left(e,x\right)=s_{x}\left(e\right)
\]
over $E$ is well defined by condition (2). Conversely, given $k\colon E*_{B}X\to A$,
fix $x\in X$ and write $b=qx$. The currying construction gives a
nominal extension $e\mapsto k\left(e,x\right)$ from the separated
fibre $e\#_{b}x$ to the unique finitely supported partial section
$\ensuremath{s_{x}}$ whose domain is determined by condition (2)
and whose relative support is determined by condition (3). Existence
and uniqueness of this extension are obtained from the finite-tuple
form of the freshness theorem \cite[Theorem 3.11]{nominal}. Here
one freshens $x$ over $b$ away from the chosen $e$, and equivariance
of $k$ shows independence of the freshening. Moreover, currying is
well defined since $\supp\left(b,s_{x}\right)\subseteq\supp\left(x\right)$.
These two constructions are inverse since uncurrying after currying
is immediate from the construction, while currying after uncurrying
follows from the uniqueness forced by conditions (2) and (3).
\end{proof}

\begin{example}
There are a few special cases of this construction worth detailing
here. If we restrict to maps $p$ of the form $\textnormal{pr}_{1}\colon G\ast N\to G$
we recover the ``multiplicative dependent product'' of Sch{\"o}pp--Stark
\cite{starknom}. More simply, if we take $p$ to be the unique map
$E\to\mathbf{1}$ and $f=\textnormal{pr}_{1}\colon E\times Z\to E$
for any input $Z$ this reduces to the usual ``wand'' operation
$E\mathbin{-\!\!*}\left(-\right)$ \cite{nom3ax}, which is right
adjoint to the separated product monoidal structure $E\ast\left(-\right)$.

When $E=\mathbb{A}$, this right adjoint is the name abstraction functor
$\left[\mathbb{A}\right]\left(-\right)$\footnote{It is well known that this $\mathbb{A}$-name abstraction functor
has a further right adjoint \cite{Menni}, and one may define various
notions of abstraction for general nominal sets \cite{genabs}.}. More generally, for any nominal set $X$ we can take $p$ to be
the projection $X\ast\mathbb{A}\to X$ in which case $\boxtimes_{p}$
is canonically isomorphic to the usual dependent name abstraction
operation \cite{depab}. 

To make this construction clear, consider taking the same projection
map $p$ and applying $\Pi_{p}$ and $\boxtimes_{p}$ to an equivariant
map $f\colon\Phi\to X\ast\mathbb{A}$ whose fiber over each $\left(x,a\right)$
with $a\#x$ is denoted $\varphi\left(x,a\right)$. A section over
$X$ of $\Pi_{p}\left(f\right)$ is an equivariant choice $x\mapsto t_{x}$
of supported total sections of $f$ over $p^{-1}\left(x\right)$ such
that $t_{x}\left(a\right)\in\varphi\left(x,a\right)$ for all $a\#x$\footnote{In the propositional setting where $f$ is an equivariant inclusion
this reduces to $\forall x\;\forall a\#x\;\varphi\left(x,a\right)\neq\emptyset$.
Moreover, for each $x\in X$, the inner condition is equivalent to
the freshness quantification \reflectbox{$\mathsf{\uppercase{N}}$}$a\;\varphi\left(x,a\right)\neq\emptyset$
of Gabbay and Pitts \cite{GabPitts}.}. In contrast, a section over $X$ of $\boxtimes_{p}\left(f\right)$
is an equivariant choice $x\mapsto s_{x}$ of supported partial sections
of $f$ over $p^{-1}\left(x\right)$ such that $s_{x}\left(a\right)$
is defined and $s_{x}\left(a\right)\in\varphi\left(x,a\right)$ for
all $a\#\left(x,s_{x}\right)$. This version is arguably more natural
in programming semantics as the freshness condition is now relative
to the support of the rule or program $s_{x}$.
\end{example}

\begin{example}
Consider the polynomial diagram in $\mathbf{Nom}$ involving application
and abstraction given by
\[
\mathbf{1}\leftarrow\mathbb{A}+\left\{ \left(\mathsf{app},0\right),\left(\mathsf{app},1\right)\right\} \overset{p}{\longrightarrow}\mathbb{A}+\left\{ \mathsf{app}\right\} +\left\{ \mathsf{abs}\right\} \rightarrow\mathbf{1}
\]
with $p\left(a\right)=\mathsf{abs}$ for all $a\in\mathbb{A}$, and
$p\left(\mathsf{app},0\right)=p\left(\mathsf{app},1\right)=\mathsf{app}$.
Since the category is both locally cartesian closed and locally subcartesian
closed, any polynomial diagram admits two interpretations as a polynomial
functor. In the cartesian sense the functor is given by $P^{\times}\left(X\right)=\mathbb{A}+X\times X+X^{\mathbb{A}}_{\textnormal{fs}}$
where $X^{\mathbb{A}}_{\textnormal{fs}}$ is the nominal set of finitely
supported functions $\mathbb{A}\to X$, whereas in the subcartesian
sense it is $P^{*}\left(X\right)=\mathbb{A}+X\times X+\left[\mathbb{A}\right]X$
where $\left[\mathbb{A}\right]X$ is the name abstraction functor\footnote{The cartesian product $X\times X$ appears in both cases since the
application component is induced by the trivial group action on $\mathbf{1}+\mathbf{1}$.}. This is Pitts\textquoteright{} nominal algebraic functor for the
untyped lambda-calculus signature, with initial algebras coinciding
with nominal sets of abstract syntax trees modulo $\alpha$-equivalence
\cite[Section 8.5]{nominal}. Similar constructions to this have been
given with cartesian contexts, using polynomial functors constructed
from the locally cartesian closed structure of a presheaf category
\cite[Section 4.2]{ArkorBind}.
\end{example}

\begin{example}
Consider the functor $T\colon\mathbf{Nom}\to\mathbf{Nom}$ taking
a nominal set $X$ to $\Sigma_{n\in\mathbb{N}}\left[\mathbb{A}^{\#n}\right]X$
where $\mathbb{A}^{\#n}$ is the nominal set of distinct $n$-tuples
of atoms. This arises from the polynomial
\[
\mathbf{1}\leftarrow\Sigma_{n\in\mathbb{N}}\mathbb{A}^{\#n}\overset{\mathsf{len}}{\longrightarrow}\mathbb{N}\rightarrow\mathbf{1}
\]
with both the unit $\eta_{X}\left(x\right)=\left\langle \right\rangle x$
and multiplication $\text{\ensuremath{\mu_{X}\left(\left\langle \mathbf{a}\right\rangle \left\langle \mathbf{b}\right\rangle x\right)=\left\langle \mathbf{a},\mathbf{b}\right\rangle x}}$
(with representatives chosen such that $\mathbf{a}\#\mathbf{b}$)
defining cartesian strong cartesian natural transformations. This
is an instance of a subcartesian polynomial monad, with algebras amounting
to a nominal set $X$ with an equivariant map $\left[\mathbb{A}\right]X\to X$.
\end{example}

\begin{example}
Consider the linear polynomial $\mathbf{1}\overset{!}{\leftarrow}\mathbb{A}^{\#2}\overset{\textnormal{id}}{\rightarrow}\mathbb{A}^{\#2}\overset{!}{\rightarrow}\mathbf{1}$
in $\mathbf{Nom}$, giving the polynomial functor $P\left(X\right)=\mathbb{A}^{\#2}\times X$
in both the cartesian and subcartesian sense. There is an invertible
natural endotransformation $\alpha$ on this polynomial functor with
equivariant components $\alpha_{X}\colon\mathbb{A}^{\#2}\times X\to\mathbb{A}^{\#2}\times X$
given by $\alpha_{X}\left(a,b,x\right)=\left(b,a,\left(ab\right)x\right)$.
This transformation is cartesian, and respects subcartesian strength
(but not cartesian strength). However, it does not come from a vertical
morphism of polynomials, since the only vertical map is the identity.
This counterexample explains why the assumption of respecting cartesian
strength is necessary to show the middle morphisms are respected and
complete the proof of Theorem \ref{mainbiequiv}.
\end{example}

\begin{rem}
In categories such as nominal sets with both a dependent product and
dependent subproduct structure the inclusion $\nabla_{p}\Rightarrow\Delta_{p}$
corresponds to a mate $\Pi_{p}\Rightarrow\boxtimes_{p}$ from the
dependent product to dependent subproduct.
\end{rem}

\section{Future directions\label{future}}

Due to the centrality of locally cartesian closed categories, this
framework gives rise to a large number of research directions the
reader may pursue. Perhaps the most apparent is generalizing Seely's
proof \cite{Seely} (with appropriate coherence issues \cite{Seelycoh})
to show that a categorical semantics for dependent affine type theory
is indeed given by locally subcartesian closed categories, also understanding
the connections with other proposed semantics \cite{riley}. In light
of our clear semantics, one may consider formal verification showing
that programs are both functionally correct and respect resources,
noting the use of separation logic in heap memory is reminiscent of
the logic of nominal sets.

Another possible direction is studying examples and properties of
subcartesian polynomial functors (such as those which require notions
of freshness where a name or label is used at most once, for instance
distinctly labeling leaves of a tree) and finding recognition theorems
for when a functor is subcartesian polynomial. This includes considering
free polynomial monads and subcartesian W-types, and subcartesian
versions of familial and parametric right adjoint functors.

Finally one may consider these categories as bases for enrichment,
especially for topoi. This is relevant since one requires what McLarty
calls the ``fundamental theorem of topos theory'' \cite{McLarty}
and thus the compatibility with slicing inherent in our structure.
In this direction, one may also investigate the case of $Q$-sets
and sheaves on quantales, especially in the case where the given quantale
$Q$ is locally subcartesian. This further suggests links with lopoi
\cite{tenorio} and a ``fundamental theorem of lopos theory'' (however
the precise link with equalizers involving a global tensor $\otimes_{\mathbf{1}}$
is currently unclear, and may only be reasonable in convolutional
settings).

The author's interest is the observation that the adjunction $\nabla_{p}\dashv\boxtimes_{p}$
we constructed in nominal sets is better seen as an artifact of  the
locally subcartesian structure on $\mathbf{FI}^{\textnormal{op}}$
from Example \ref{FI} lifting to a closed one on $\left[\mathbf{FI},\mathbf{Set}\right]$.
In fact, under the standard equivalence taking a nominal set $X$
to a sheaf $\mathbf{FI}\to\mathbf{Set},$ $S\mapsto\left\{ x\in X:\supp{\left(x\right)}\subseteq S\right\} $
and with a cospan of nominal sets instead seen as sheaves $F$ and
$G$ over $K$, the separated pullback may be seen as a convolution
of the form
\[
\int^{\underset{\supp\left(t\right)=T}{\left(T,t\right)\in\textnormal{el}\left(K\right)}}\int^{\alpha:A\to T,\beta:B\to T\in\mathbf{FI}^{\textnormal{op}}}\mathbf{FI}^{\textnormal{op}}\left(-,\alpha\otimes_{T}\beta\right)\times F\left(A\right)_{K\left(\alpha\right)\left(t\right)}\times G\left(B\right)_{K\left(\beta\right)\left(t\right)}.
\]
We will cover the theory of such fibred convolutions in a future paper
in the more general setting of indexed monoidal categories. The above
example is in fact relatively simple due to the base category being
locally subcartesian and pullback preserving functors $\mathbf{FI}\to\mathbf{Set}$
being familial in the two dimensional sense \cite[Remark 2.3]{WalkerFam},
namely a groupoid-indexed colimit of representables. 

\bibliographystyle{abbrv}
\bibliography{references}

\end{document}